\tikzset{cd/.style=matrix of math nodes}
\title{Metaplectic and spin representations:\\ a parallel treatment}
\author{Adrián J. Naranjo-Alvarado and Joseph C. Várilly%
\thanks{Email: \texttt{adrian.naranjoalvarado@ucr.ac.cr},\enspace
\texttt{joseph.varilly@ucr.ac.cr}}
\\[12pt]
{\footnotesize 
Escuela de Matemática, Universidad de Costa Rica,
San José 11501, Costa Rica}}
\date{\today}
\DeclareMathOperator{\Arg}{Arg}     
\renewcommand{\det}{\operatorname{det}} 
\DeclareMathOperator{\Dom}{Dom}     
\DeclareMathOperator{\End}{End}     
\DeclareMathOperator{\im}{im}       
\DeclareMathOperator{\linspan}{span} 
\DeclareMathOperator{\per}{per}     
\DeclareMathOperator{\Pf}{Pf}       
\newcommand{\al}{\alpha}            
\newcommand{\bt}{\beta}             
\newcommand{\dl}{\delta}            
\newcommand{\eps}{\varepsilon}      
\newcommand{\Ga}{\Gamma}            
\newcommand{\ga}{\gamma}            
\newcommand{\La}{\Lambda}           
\newcommand{\la}{\lambda}           
\newcommand{\Om}{\Omega}            
\newcommand{\sg}{\sigma}            
\newcommand{\vf}{\varphi}           
\newcommand{\ze}{\zeta}             
\newcommand{\bC}{{\mathbb{C}}}      
\newcommand{\bCl}{\operatorname{\bC\ell}} 
\newcommand{\bR}{{\mathbb{R}}}      
\newcommand{\bT}{\mathbb{T}}        
\newcommand{\bZ}{{\mathbb{Z}}}      
\newcommand{\bos}{\mathrm{b}}       
\newcommand{\Cl}{\mathrm{C}\ell}    
\newcommand{\even}{\mathrm{even}}   
\newcommand{\fer}{\mathrm{f}}       
\newcommand{\rGL}{\mathrm{GL}}      
\newcommand{\rMp}{\mathrm{Mp}}      
\newcommand{\rMpc}{\mathrm{Mp}^\mathrm{c}} 
\newcommand{\rO}{\mathrm{O}}        
\newcommand{\rSk}{\mathrm{Sk}}      
\newcommand{\rSO}{\mathrm{SO}}      
\newcommand{\rSp}{\mathrm{Sp}}      
\newcommand{\rSpin}{\mathrm{Spin}}  
\newcommand{\rSpinc}{\mathrm{Spin}^\mathrm{c}} 
\newcommand{\rU}{\mathrm{U}}        
\newcommand{\spinc}{spin$^\mathrm{c}$} 
\newcommand{\sB}{\mathcal{B}}       
\newcommand{\sD}{\mathcal{D}}       
\newcommand{\sF}{\mathcal{F}}       
\newcommand{\sJ}{\mathcal{J}}       
\newcommand{\sU}{\mathcal{U}}       
\newcommand{\sW}{\mathcal{W}}       
\renewcommand{\geq}{\geqslant}      
\newcommand{\isom}{\simeq}          
\renewcommand{\leq}{\leqslant}      
\newcommand{\less}{\setminus}       
\newcommand{\ox}{\otimes}           
\newcommand{\rtri}{\blacktriangleright} 
\newcommand{\That}{\widehat T}      
\renewcommand{\v}{\vee}             
\newcommand{\w}{\wedge}             
\newcommand{\x}{\times}             
\newcommand{\7}{\dagger}            
\newcommand{\8}{\bullet}            
\renewcommand{\.}{\cdot}            
\renewcommand{\:}{\colon}           
\newcommand{\hookto}{\hookrightarrow} 
\newcommand{\otto}{\leftrightarrow} 
\newcommand{\ovl}{\overline}        
\newcommand{\wb}{\overline}         
\newcommand{\oxyox}{\otimes\cdots\otimes} 
\newcommand{\stroke}{\mathbin|}     
\newcommand{\vyv}{\vee\cdots\vee}     
\newcommand{\wyw}{\wedge\cdots\wedge} 
\newcommand{\marker}{\vspace{6pt}\noindent{$\rtri$}\enspace} 
\newcommand{\half}{{\mathchoice{\thalf}{\thalf}{\shalf}{\shalf}}}
\newcommand{\ihalf}{{\mathchoice{\tihalf}{\tihalf}{\sihalf}{\sihalf}}}
\newcommand{\quarter}{{\mathchoice{\tcuar}{\tcuar}{\scuar}{\scuar}}}
\newcommand{\scuar}{{\scriptstyle\frac{1}{4}}} 
\newcommand{\shalf}{{\scriptstyle\frac{1}{2}}} 
\newcommand{\sihalf}{\scriptstyle\frac{i}{2}}  
\newcommand{\tcuar}{\tfrac{1}{4}}     
\newcommand{\thalf}{\tfrac{1}{2}}     
\newcommand{\tihalf}{\tfrac{i}{2}}    
\newcommand{\miss}[1]{\widehat{#1}}   
\newcommand{\set}[1]{\{\,#1\,\}}      
\newcommand{\word}[1]{\quad\text{#1}\quad} 
\newcommand{\braket}[2]{\langle#1\stroke#2\rangle} 
\newcommand{\brakett}[2]{\langle\!\langle#1\stroke#2\rangle\!\rangle}
\newcommand{\ketbra}[2]{\lvert#1\rangle\langle#2\rvert} 
\newcommand{\ddto}[1]{\frac{d}{d#1}\biggr|_{#1=0}} 
\newcommand{\twobytwo}[4]{\begin{pmatrix} 
       #1 & #2 \\ #3 & #4 \end{pmatrix}}
\numberwithin{equation}{section}     
\theoremstyle{plain}
\newtheorem{thm}{Theorem}[section]    
\newtheorem{prop}[thm]{Proposition}   
\newtheorem{lema}[thm]{Lemma}         
\newtheorem{corl}[thm]{Corollary}     
\theoremstyle{definition}
\newtheorem{defn}[thm]{Definition}    
\theoremstyle{remark}
\newtheorem*{notn}{Notation}          
\renewcommand{\section}{\@startsection{section}{1}{\z@}%
                        {-3.5ex \@plus -1ex \@minus -.2ex}%
                        {2.3ex \@plus.2ex}%
                        {\normalfont\large\bfseries}}
\renewcommand{\subsection}{\@startsection{subsection}{2}{\z@}%
                        {-3.25ex \@plus -1ex \@minus -.2ex}%
                        {1.5ex \@plus .2ex}%
                        {\normalfont\normalsize\bfseries}}
\renewcommand{\subsubsection}{\@startsection{subsubsection}{3}{\z@}%
                        {-3.25ex \@plus -1ex \@minus -.2ex}%
                        {1.5ex \@plus .2ex}%
                        {\normalfont\small\bfseries}}
\renewcommand{\@dotsep}{200} 
\newcommand{\qnedsymbol}{$\boxminus$} 
\newcommand{\qefsymbol}{$\lozenge$}   
\DeclareRobustCommand{\qned}{\ifmmode
  \else \leavevmode\unskip\penalty9999 \hbox{}\nobreak\hfill \fi
  \quad\hbox{\qnedsymbol}}
\DeclareRobustCommand{\qef}{\ifmmode
  \else \leavevmode\unskip\penalty9999 \hbox{}\nobreak\hfill \fi
  \quad\hbox{\qefsymbol}}
\newcommand{\hideqed}{\renewcommand{\qed}{}} 
\newcommand{\hideqef}{\renewcommand{\qef}{}} 
\begin{document}

\maketitle

\begin{abstract}  

The analogies between symplectic and orthogonal groups, regarded as
symmetries of real bilinear forms, are manifest in their (metaplectic
and spin) projective representations. In finite dimensions, those are
true representations of doubly covering groups; but one may also use 
group extensions by a circle. Here we lay out a parallel treatment
of the Mp$^\mathrm{c}$ and Spin$^\mathrm{c}$ covering groups, acting 
on the respective Fock spaces by permuting certain Gaussian vectors.
The cocycles of these extensions exhibit interesting similarities.
\end{abstract}


\section{Introduction} 
\label{sec:intro}

The well-known thesis of David Shale~\cite{Shale62}, and the follow-up
paper with Stinespring~\cite{ShaleS65}, established that the linear
symmetries of free boson fields or free fermion fields exhibit a
common feature: such symmetries are implementable on Fock space if and
only if their antilinear parts (in a sense made precise below) are
Hilbert--Schmidt operators. In more modern
terminology~\cite{PressleyS86}, these symmetries belong to the
\textit{restricted} general linear group of the one-particle space.
These one-particle spaces come equipped with a real bilinear form
which is skewsymmetric in the boson case but symmetric in the fermion
case: the corresponding symmetries belong to the restricted symplectic
group or the restricted orthogonal group, respectively.

With finitely many degrees of freedom, the one-particle spaces have
finite even dimension, the Hilbert--Schmidt condition becomes trivial,
and one is left with the ordinary Lie groups $\rSp(2m,\bR)$ and
$\rSO(2m)$, respectively. Both of these have ``double-valued''
projective representations, which are tantamount to true
representations of their double covering groups. Such double covers
are not sufficient for infinite-dimensional one-particle spaces, where
different extensions are needed: there, the appropriate central
extensions are not by~$\{\pm 1\}$ but by the circle group $\rU(1)$.
Naturally, one may also do this in finite dimensions: in the fermionic
case, the extended group is $\rSpinc(2m)$, the structure group of
``\spinc\ structures'' (if~any) on a Riemannian manifold. The
analogous concept of ``$\rMpc$ structures'' on a symplectic manifold
is perhaps less familiar, but has been developed in a similar
fashion~\cite{RobinsonR89}.

These projective representations act on Fock spaces. For boson fields,
the Fock space is best treated in the ``complex wave representation''
proposed by Irving Segal \cite{Segal60, Segal78}, and developed in
detail by Bargmann \cite{Bargmann61, Bargmann67}, as a concrete
Hilbert space of holomorphic or antiholomorphic entire functions.
Fermions, on the other hand, need a space of ``anticommuting (or
Grassmann) variables'': their Fock spaces appear as exterior algebras.

These symmetry groups act on the Fock spaces by permuting a
distinguished set of vectors (up to phase factors), that we call
Gaussian vectors, manifested by quadratic exponentials. (The recent
work by Neumaier and Ghanni Farashahi \cite{NeumaierGF22} places this
permutation action in a more abstract setting, for bosons at any
rate.) The main task is to identify the $2$-cocycles or factor systems
of their phases. The Shale--Stinespring method leads to
$\rU(1)$-valued cocycles that show striking resemblances as well as
subtle differences between the bosonic and fermionic cases. To
simplify the comparisons, here we limit ourselves to finitely many
degrees of freedom. The one-particle space in each case will be a real
vector space~$V$ of finite dimension~$2m$, equipped with a
nondegenerate bilinear form, either skewsymmetric or symmetric.
Canonical quantization requires the presence of a compatible linear
complex structure, so that $V$ becomes an $m$-dimensional complex
Hilbert space, whereby the Fock-space construction (i.e., second
quantization) can proceed.

The $\rU(1)$-metaplectic representation in this sense first appears,
to our knowledge, in a short article by Vergne~\cite{Vergne77}, which
improves notably the standard case-by-case construction of the
double-valued metaplectic representation found in many textbooks,
e.g., \cite{Folland89, Wallach18}. An adaptation of this presentation
to fermionic free fields appeared in~\cite{Rhea} by J.\,M.
Gracia-Bondía and one of us; this is retrieved in Chapter~6
of~\cite{Polaris}. An exposition of both constructions forms the basis
of the first author's M.\,Sc. thesis~\cite{Kamuk}.

\bigskip

The plan of the article is as follows. In
Section~\ref{sec:linear-algebra} we develop the linear algebra of
one-particle spaces with finite even dimension. Emphasis is placed on
a particular parametrization of the symplectic group (for bosons),
which underlies the monograph~\cite{RobinsonR89}, and a similar
parametrization of the special orthogonal group (for fermions).
Section~\ref{sec:Fock-spaces} brings in the corresponding Fock spaces,
focusing attention on their Gaussian vectors. Each type of Fock space
carries a special projective representation of the symmetry group,
which is implemented mainly by permuting the Gaussian vectors; these
are laid out in Section~\ref{sec:meta-spin}. We develop both
representations in a parallel fashion, which allows to compare their
cocycles so as to reveal their common features. In
Appendix~\ref{app:pin-repn}, we deal with reflections, extending the
spin representation to the full orthogonal group.

\section{Symplectic and orthogonal vector spaces} 
\label{sec:linear-algebra}

Our starting point is a real vector space $V$ with even dimension
$n = 2m$, equipped with one of the following bilinear forms.

\begin{description}
\item[Case B]
Fix a nondegenerate \textit{skewsymmetric} (i.e., symplectic) bilinear
form $s \: V \x V \to \bR$. We call $(V,s)$ a \textit{symplectic
vector space}.
\item[Case F]
Fix a positive definite \textit{symmetric} bilinear form
$d \: V \x V \to \bR$, i.e., a real scalar product on~$V$. We call
$(V,d)$ an \textit{orthogonal vector space}.
\end{description}

Nondegeneracy of~$s$ demands that $\dim_\bR V$ be~even. This is not
mandatory for $(V,d)$, but we keep the even dimension for comparison's
sake.

In either case, one can introduce complex structures compatible with 
the given bilinear form; in Case~B, there is an extra requirement of 
positivity.

\begin{defn} 
\label{df:complex-structure}
A (linear) \textit{complex structure} on~$V$ is a real-linear operator 
$J \in \End_\bR(V)$ such that $J^2 = -1$; we say $J$ is 
\textit{compatible} if either of the following conditions holds:
\begin{enumerate}
\item 
A \textit{positive complex structure} on $(V,s)$ must also satisfy:
\begin{align*}
s(Ju, Jv) &= s(u, v)  \word{for all} u,v \in V,
\\
s(u, Ju) &> 0  \word{for} u \in V, \ u \neq 0.
\end{align*}
\item 
An \textit{orthogonal complex structure} on $(V,d)$ must also satisfy:
$$
d(Ju, Jv) = d(u, v)  \word{for all} u,v \in V.
$$
\end{enumerate}

In either case, a compatible complex structure gives rise to a second
bilinear form on~$V$:
\begin{itemize}
\item
On $(V,s)$ with a positive~$J$, define $d(u,v) := s(u,Jv)$.
\item
On $(V,d)$ with an orthogonal~$J$, define $s(u,v) := d(Ju,v)$.
\qef
\end{itemize}
\hideqef
\end{defn}

It is easily checked, using $J^2 = -1$, that the new~$d$ is symmetric
and positive definite; and that the new~$s$ is symplectic. Moreover,
a compatible $J$ now satisfies \textit{both} conditions (a) and~(b)
of Definition~\ref{df:complex-structure}.

In both cases, the transpose $A^t$ of a real-linear map 
$A \in \End_\bR(V)$ is taken with respect to the real scalar
product~$d$, i.e., $d(u, A^t v) := d(Au, v)$. It is then immediate
that $J^t = J^{-1} = -J$. 

\medskip 

The point of having both $d$ and~$s$ available (mediated by~$J$) is
that these become the real and imaginary parts of a complex scalar
product. Note first that any complex structure confers on~$V$ the
status of a \textit{complex} vector space of dimension $m = \half n$,
just by setting
$$
(a + bi)v := av + bJv  \word{for} a,b \in \bR; \ v \in V.
$$  
Given either $(V,s)$ or~$(V,d)$ together with $J$ as in
Definition~\ref{df:complex-structure}, define for $u,v \in V$:
\begin{equation}
\braket{u}{v}
:= d(u,v) + is(u,v) = s(u,Jv) + is(u,v) = d(u,v) + id(Ju,v).
\label{eq:scalar-product} 
\end{equation}
Notice that this complex scalar product $\braket{u}{v}$ is linear in
the second argument~$v$ and antilinear in the first argument~$u$ (the
usual convention in physics). From~\eqref{eq:scalar-product} it is
clear that
\begin{equation}
\braket{u}{Jv} = i\,\braket{u}{v}   \word{and}
\braket{Ju}{v} = -i\,\braket{u}{v}  \quad\text{for all } u,v \in V.
\label{eq:effect-of-J} 
\end{equation}

From now on, $V$ will denote the $m$-dimensional complex Hilbert space
with the scalar product~\eqref{eq:scalar-product} (sometimes writing
$V_J$ for clarity). It remains useful to consider real-linear
operators on~$V$. Such an $R \in \End_\bR(V)$ is called
\textit{linear} (i.e., $\bC$-linear) if $RJ = JR$; whereas
$S \in \End_\bR(V)$ is \textit{antilinear} if $SJ = - JS$.

Of course, any $\bR$-linear $T \in \End_\bR(V)$ is uniquely a sum
$T = T_1 + T_2$ of $\bC$-linear and antilinear parts, by setting
$T_1 := \half(T - JTJ)$ and $T_2 := \half(T + JTJ)$.

\subsection{Symmetry groups} 
\label{ssc:symmetry-groups}

The symmetry groups of~$V$ consist of those $\bR$-linear maps that
leave $s$ or~$d$ invariant. These are the \textit{symplectic group}
$\rSp(V) = \rSp(V,s)$, of maps $g$ satisfying
$s(gu, gv) \equiv s(u,v)$; and respectively the \textit{orthogonal
group} $\rO(V) = \rO(V,d)$ of maps $g$ such that
$d(gu, gv) \equiv d(u,v)$, for all $u,v \in V$. Choosing a real basis
for~$V$ yields isomorphisms $\rSp(V) \isom \rSp(2m,\bR)$ and
$\rO(V) \isom \rO(2m)$. Thus the Lie group $\rSp(V)$ is noncompact and
connected, whereas $\rO(V)$ is compact and has two connected
components. Now, $\det g = +1$ for $g \in \rSp(V)$, while in $\rO(V)$
one finds $\det g = \pm 1$, the neutral component being
$\rSO(V) := \set{g \in \rO(V) : \det g = +1}$.

Given a complex structure $J$ satisfying either (a) or~(b) of
Definition~\ref{df:complex-structure}, so that the complex scalar
product~\eqref{eq:scalar-product} is defined, the intersection of both
symmetry groups is the corresponding \textit{unitary group}:
\begin{equation}
\rU_J(V) := \rSp(V) \cap \rO(V) = \rSp(V) \cap \rSO(V).
\label{eq:unitary-group} 
\end{equation}
Remark that $\rU_J(V) \subset \rSO(V)$ since $\rU_J(V) \isom \rU(m)$
is connected.

It is immediate that $g \in \rO(V)$ if and only if $g^t g = 1 = gg^t$;
and on recalling the conventions of 
Definition~\ref{df:complex-structure}, that $g \in \rSp(V)$ if and
only if $g^t J g = J = g J g^t$. In both cases, it is helpful to write
$g = p_g + q_g$ (uniquely) as the sum of a linear map $p_g$ and an
antilinear map $q_g$, namely:
$$
p_g := \half(g - JgJ) \word{and} q_g := \half(g + JgJ).
$$
Notice that $g$ will be (complex) linear and thus $q_g = 0$, if and
only if $g \in \rU_J(V)$: one checks that
$\braket{gv}{gu} \equiv \braket{v}{u}$ if and only if $Jg = gJ$. We
summarize as follows.

\begin{prop} 
\label{pr:p-q-relations}
If $g = p_g + q_g$ expresses $g \in \rSp(V)$ or $g \in \rO(V)$ as a 
sum of its linear and antilinear parts, then
\begin{equation}
p_g^t p_g \mp q_g^t q_g = 1 = p_g p_g^t \mp q_g q_g^t, \quad
p_g^t q_g = \pm q_g^t p_g,  \quad  p_g q_g^t = \pm q_g p_g^t
\label{eq:p-q-relations} 
\end{equation}
where each upper sign goes with $g \in \rSp(V)$ and the lower sign
with $g \in \rO(V)$.
\end{prop}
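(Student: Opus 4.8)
The plan is to separate each defining condition into its $\bC$-linear and antilinear components and read off the stated identities from the two resulting equations. The crucial preliminary observation is that the $d$-transpose respects the type of an operator: since $J^t = -J$, transposing $RJ = JR$ shows that $R^t$ is linear whenever $R$ is, while transposing $SJ = -JS$ shows that $S^t$ is antilinear whenever $S$ is. As the transpose is additive, $g^t = p_g^t + q_g^t$ is therefore precisely the decomposition of $g^t$ into its linear part $p_g^t$ and antilinear part $q_g^t$. Combined with the elementary composition rules — a product of two linear or of two antilinear operators is linear, whereas a mixed product is antilinear — this lets me split each of the products $g^t g$, $gg^t$, $g^t J g$, $gJg^t$ into one linear summand and one antilinear summand, each of which must match the corresponding part of the right-hand side.

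For the orthogonal case I would expand $g^t g = (p_g^t + q_g^t)(p_g + q_g)$. Here $p_g^t p_g$ and $q_g^t q_g$ are linear while $p_g^t q_g$ and $q_g^t p_g$ are antilinear; since the right-hand side $1$ is linear, equating the linear and antilinear parts gives $p_g^t p_g + q_g^t q_g = 1$ and $p_g^t q_g + q_g^t p_g = 0$, which are the lower-sign relations. Expanding $gg^t = 1$ in the same way yields $p_g p_g^t + q_g q_g^t = 1$ and $p_g q_g^t + q_g p_g^t = 0$, completing that case.

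For the symplectic case I would expand $g^t J g = J$, noting that $Jp_g$ is linear and $Jq_g$ antilinear. The four terms split as before into linear summands $p_g^t J p_g + q_g^t J q_g$ and antilinear summands $p_g^t J q_g + q_g^t J p_g$, so invariance gives $p_g^t J p_g + q_g^t J q_g = J$ together with $p_g^t J q_g + q_g^t J p_g = 0$. To clear the $J$'s I would left-multiply each equation by $-J = J^{-1}$ and use the identities $-JLJ = L$ for a linear operator $L$ and $-JSJ = -S$ for an antilinear operator $S$ (both immediate from $J^2 = -1$ and the type of the operator). This turns the first equation into $p_g^t p_g - q_g^t q_g = 1$ and the second into $p_g^t q_g - q_g^t p_g = 0$; treating $gJg^t = J$ in the same way produces the companion identities in $p_g p_g^t$ and $q_g q_g^t$. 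These are exactly the upper-sign relations.

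The computations themselves are short; the only real care needed is the sign bookkeeping, namely that $J$ commutes with the linear parts but anticommutes with the antilinear parts — this single fact is what forces the opposing $\mp$ and $\pm$ between the two cases. I would guard against a transposed sign by checking the outcome against the convention in the statement (upper sign for $\rSp(V)$, lower for $\rO(V)$) on a trivial instance, such as $m=1$ with $g$ the reflection $\diag(1,-1)$, which is purely antilinear and makes $p_g^t p_g + q_g^t q_g = 1$ manifest.
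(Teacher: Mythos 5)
Your argument is correct: splitting $g^tg=1=gg^t$ (orthogonal case) and $g^tJg=J=gJg^t$ (symplectic case) into linear and antilinear parts, after noting that transposition preserves the linear/antilinear type and that $J$ commutes with linear but anticommutes with antilinear operators, yields exactly the relations \eqref{eq:p-q-relations} with the signs as stated. The paper omits the details, calling it ``a straightforward calculation (often written in matrix form)'', and your coordinate-free version is essentially that same calculation, so nothing further is needed.
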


The proof is a straightforward calculation (often written in
matrix~form). Notice that $p_g^t p_g$ and $q_g^t q_g$ are symmetric
and positive definite operators on the Euclidean space $(V,d)$, but
the signs in the first relation of~\eqref{eq:p-q-relations} point to a
crucial difference between the two groups:
\begin{itemize}
\item
If $g \in \rSp(V)$, then $p_g^t p_g = 1 + q_g^t q_g \geq 1$, so that
the linear part $p_g$ is invertible.
\item
If $g \in \rO(V)$, then $p_g^t p_g + q_g^t q_g = 1$, so that
$0 \leq p_g^t p_g \leq 1$, but $p_g$ may or may not be invertible.
\end{itemize}

Here the relation $A \geq B$ among symmetric linear operators
($A^t = A$, $B^t = B$) means that $A - B$ is positive semidefinite;
and one writes $A > B$ if $A - B$ is positive definite.

\marker
The $(p_g,q_g)$ parametrization of the symmetry groups is widely 
used, but for our purposes an alternative parametrization is better.
Following Robinson and Rawnsley \cite{RobinsonR89}, though with 
slightly different conventions, we replace the antilinear part $q_g$
with the following antilinear operator.

\begin{notn}
For either $g \in \rSp(V)$ or $g \in \rO(V)$ with $p_g$ invertible,
define
$$
T_g := q_g p_g^{-1} \,.
$$
It turns out~\cite{Rhea, Araki87} that $p_g$ is invertible only when
$g \in \rSO(V)$, the neutral component of~$\rO(V)$. In the Lie group
$\rSO(V)$, denote by $\rSO_*(V)$ the complement of the closed
submanifold wherein $\det p_g = 0$. Then $T_g$ is defined in this
subset.

Since $g^{-1} = g^t$ in the orthogonal case, it is clear that
$\rSO_*(V)$ is invariant under inversion $g \mapsto g^{-1}$. We use
the abbreviation $\That_g := T_{g^{-1}}$ for convenience.
\end{notn}

The following three lemmas, that develop some algebraic relations
among these group components, are taken from~\cite{Kamuk}.

\begin{lema} 
\label{lm:T-hat}
If either $g \in \rSp(V)$ or $g \in \rSO_*(V)$, then
$\That_g = - p_g^{-1} T_g p_g$\,.
\end{lema}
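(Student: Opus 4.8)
The plan is to obtain the identity by pushing the inversion $g \mapsto g^{-1}$ through the linear/antilinear splitting, rather than by any matrix computation. Since $T_g = q_g p_g^{-1}$, and hence $q_g = T_g p_g$, the target $\That_g = -p_g^{-1} T_g p_g$ is equivalent to the cleaner statement
$$
\That_g = -p_g^{-1} q_g .
$$
Recalling that $\That_g := T_{g^{-1}} = q_{g^{-1}} p_{g^{-1}}^{-1}$, it therefore suffices to prove $q_{g^{-1}} p_{g^{-1}}^{-1} = -p_g^{-1} q_g$, which I would derive directly from the relation $g g^{-1} = 1$.

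First I would record the multiplicative behaviour of the parts. Writing $g^{-1} = p_{g^{-1}} + q_{g^{-1}}$ for its own linear/antilinear decomposition, and using the characterizations $RJ = JR$ for $\bC$-linear and $SJ = -JS$ for antilinear operators, one checks that a product of two antilinear maps is linear, whereas a product of a linear and an antilinear map (in either order) is antilinear. Expanding $(p_g + q_g)(p_{g^{-1}} + q_{g^{-1}}) = 1$ and grouping by this rule splits the left side into a linear part $p_g p_{g^{-1}} + q_g q_{g^{-1}}$ and an antilinear part $p_g q_{g^{-1}} + q_g p_{g^{-1}}$. As the identity operator $1$ is $\bC$-linear, its antilinear component must vanish, which yields
$$
p_g q_{g^{-1}} + q_g p_{g^{-1}} = 0 .
$$

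From here the conclusion is immediate: multiplying this on the left by $p_g^{-1}$ and on the right by $p_{g^{-1}}^{-1}$ gives $p_g^{-1} q_g + q_{g^{-1}} p_{g^{-1}}^{-1} = 0$, that is, $\That_g = -p_g^{-1} q_g = -p_g^{-1} T_g p_g$. The only point requiring care — and the one I would flag as the genuine obstacle — is the invertibility of both $p_g$ and $p_{g^{-1}}$, without which neither $T_g$ nor $\That_g$ would be defined. This is precisely what the hypotheses provide: in the symplectic case $p_g$ is always invertible by Proposition~\ref{pr:p-q-relations}, while in the orthogonal case $g \in \rSO_*(V)$ means $\det p_g \neq 0$, a condition preserved under inversion since $\rSO_*(V)$ was observed to be invariant under $g \mapsto g^{-1}$. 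In particular, no appeal to the finer relations~\eqref{eq:p-q-relations} is needed beyond securing this invertibility.
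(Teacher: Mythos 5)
Your proof is correct and follows essentially the same route as the paper's: both extract the vanishing antilinear component of the identity $g g^{-1} = 1$ after splitting each factor into linear and antilinear parts. The only (harmless) difference is that the paper first computes $p_{g^{-1}} = p_g^t$ and $q_{g^{-1}} = \mp q_g^t$ explicitly before expanding, whereas you keep $p_{g^{-1}}, q_{g^{-1}}$ abstract and need only their invertibility, which you correctly secure from Proposition~\ref{pr:p-q-relations} in the symplectic case and from the inversion-invariance of $\rSO_*(V)$ in the orthogonal case.
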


\begin{proof}
First notice that $(1 + T_g) p_g = p_g + q_g = g$. For 
$g \in \rSp(V)$, 
$$
g^{-1} = -J g^t J = -J (p_g^t + q_g^t) J = p_g^t - q_g^t
\implies  p_{g^{-1}} = p_g^t\,, \quad q_{g^{-1}} = - q_g^t\,,
$$
whereas $p_{g^{-1}} = p_g^t$ but $q_{g^{-1}} = + q_g^t$ for
$g \in \rSO_*(V)$. Thus, in both cases, $gg^{-1} = 1$ entails
$$
1 = (p_g + T_g p_g)(p_g^t + \That_g p_g^t)
= p_g p_g^t + p_g \That_g p_g^t
+ T_g p_g p_g^t + T_g p_g \That_g p_g^t.
$$
The antilinear part of this equality is
$p_g \That_g p_g^t + T_g p_g p_g^t = 0$, so that 
$p_g \That_g = - T_g p_g$ and thus $\That_g = - p_g^{-1} T_g p_g$
since $p_g$ and $p_g^t$ are both invertible.
\end{proof}

\begin{lema} 
\label{lm:symmetry-of-T}
If $g \in \rSp(V)$, then $T_g$ is \emph{symmetric} and satisfies
$0 < 1 - T_g^2 \leq 1$. By contrast, if $g \in \rSO_*(V)$, then
$T_g$ is \emph{skewsymmetric} and satisfies $1 - T_g^2 \geq 1$.
\end{lema}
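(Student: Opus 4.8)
The plan is to read off the symmetry type of $T_g$ directly from the mixed relation in Proposition~\ref{pr:p-q-relations}, and then to extract the spectral bounds from a single closed identity for $1 - T_g^2$ in terms of $p_g p_g^t$. Throughout, transposes are taken relative to the real scalar product~$d$, so the order-reversing rule $(AB)^t = B^t A^t$ holds for \emph{all} real-linear operators, whether $\bC$-linear or antilinear; in particular $T_g^t = (q_g p_g^{-1})^t = (p_g^t)^{-1} q_g^t$, where I use that $p_g^t$ is again $\bC$-linear and $q_g^t$ again antilinear (both follow from $J^t = -J$).

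For the symmetry claim, I would compare $T_g = q_g p_g^{-1}$ with $T_g^t = (p_g^t)^{-1} q_g^t$: clearing the invertible factors $p_g^t$ on the left and $p_g$ on the right shows that $T_g = \pm T_g^t$ is equivalent to $p_g^t q_g = \pm q_g^t p_g$. The upper sign is precisely the symplectic relation in~\eqref{eq:p-q-relations}, so $T_g$ is symmetric for $g \in \rSp(V)$; the lower sign is the orthogonal relation, so $T_g$ is skewsymmetric for $g \in \rSO_*(V)$.

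For the bounds, the natural quantity is the positive semidefinite operator $T_g^t T_g = (p_g^t)^{-1} (q_g^t q_g)\, p_g^{-1}$. Substituting the first relation of~\eqref{eq:p-q-relations} in the form $q_g^t q_g = \pm(p_g^t p_g - 1)$ and cancelling gives $T_g^t T_g = \pm\bigl(1 - (p_g p_g^t)^{-1}\bigr)$. Now invoking the symmetry type already found — so that $T_g^t T_g = T_g^2$ in the symplectic case and $T_g^t T_g = -T_g^2$ in the orthogonal case — the signs match on both sides and both situations collapse to the single identity
\begin{equation*}
1 - T_g^2 = (p_g p_g^t)^{-1}.
\end{equation*}

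Finally the inequalities come from the second relation $p_g p_g^t = 1 \pm q_g q_g^t$ together with $p_g p_g^t > 0$ (valid because $p_g$ is invertible on each domain, as noted after Proposition~\ref{pr:p-q-relations}). For $g \in \rSp(V)$ this reads $p_g p_g^t = 1 + q_g q_g^t \geq 1$, whence $0 < (p_g p_g^t)^{-1} \leq 1$; for $g \in \rSO_*(V)$ it reads $p_g p_g^t = 1 - q_g q_g^t \leq 1$, whence $(p_g p_g^t)^{-1} \geq 1$. I expect the only delicate points to be the transposition of the antilinear factor $q_g$ and the sign bookkeeping, since the symmetry property must be secured \emph{first} in order to convert the manifestly positive operator $T_g^t T_g$ into $\pm T_g^2$ and thereby unify the two cases.
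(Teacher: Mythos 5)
Your proposal is correct and follows essentially the same route as the paper: the symmetry type is read off from $T_g^t = p_g^{-t} q_g^t = \pm q_g p_g^{-1}$ via the mixed relation of Proposition~\ref{pr:p-q-relations}, and the spectral bounds rest on the identity $1 - T_g^2 = (p_g p_g^t)^{-1}$. The only (harmless) variation is that you reach that identity by computing $T_g^t T_g = p_g^{-t}\,q_g^t q_g\,p_g^{-1}$ directly from the first relation of~\eqref{eq:p-q-relations} and then converting to $\pm T_g^2$, whereas the paper factorizes $1 = g^{-1}g = p_g^t(1 - T_g)(1 + T_g)p_g$; likewise you deduce the final inequalities from $p_g p_g^t \geq 1$ or $p_g p_g^t \leq 1$ instead of from the semidefiniteness of $\pm T_g^2$, which amounts to the same thing.
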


\begin{proof}
It follows directly from the third equality
in~\eqref{eq:p-q-relations} that 
$$
T_g^t = p_g^{-t} q_g^t = \pm q_g p_g^{-1} = \pm T_g \,,
$$
i.e., $T_g^t = T_g$ if $g \in \rSp(V)$, whereas 
$T_g^t = - T_g$ if $g \in \rSO_*(V)$. 

It is useful to express these properties of the \textit{antilinear}
operator $T_g$ in terms of the scalar product
\eqref{eq:scalar-product} by:
\begin{equation}
\braket{u}{T_g v} = \pm \braket{v}{T_g u} \word{for all} u,v \in V,
\label{eq:symmetry-of-T} 
\end{equation}
with sign $+$ for symmetry, $-$ for skewsymmetry. 

\goodbreak 

Since $g = p_g + q_g = (1 + T_g) p_g = p_g (1 - \That_g)$ using the 
previous proof, the exchange $g \otto g^{-1}$ yields
$g^{-1} = p_g^t (1 - T_g)$, whereby
\begin{equation}
1 = g^{-1} g = p_g^t (1 - T_g) (1 + T_g) p_g = p_g^t (1 - T_g^2) p_g.
\label{eq:fancy-formula} 
\end{equation}
That implies $1 - T_g^2 = (p_g p_g^t)^{-1}$, which is positive
semidefinite and invertible, thus positive definite. On replacing $g$ 
by~$g^{-1}$, we also get $1 = p_g (1 - \That_g^2) p_g^t$ and so
$1 - \That_g^2 = (p_g^t p_g)^{-1}$.

In Case~B, it is also true that $T_g^2 = T_g^t T_g$ is positive 
semidefinite, yielding $1 - T_g^2 \leq 1$. But in Case~F, one sees 
that $T_g^2 = - T_g^t T_g$ is negative semidefinite, and so 
$1 - T_g^2 \geq 1$.
\end{proof}

We shall need the composition formulas for $\rSp(V)$ and $\rSO_*(V)$
in terms of the $(p,T)$-parametrization. In Case~F, these only apply
to $g,h \in \rSO_*(V)$ only if $gh \in \rSO_*(V)$ also.

\begin{lema} 
\label{lm:compositions}
If $g,h \in \rSp(V)$ or if $g,h,gh \in \rO_*(V)$, then
\begin{subequations}
\label{eq:compositions} 
\begin{align}
p_{gh} & = p_g (1 - \That_g T_h) p_h,
\label{eq:p-compositions} 
\\
T_{gh} & = T_g + p_g^{-t}\, T_h(1 - \That_g T_h)^{-1} p_g^{-1}.
\label{eq:T-compositions} 
\end{align}
\end{subequations}
\end{lema}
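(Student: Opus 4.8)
The plan is to compute $p_{gh}$ and $q_{gh}$ directly from the definitions and then assemble $T_{gh} = q_{gh}\,p_{gh}^{-1}$. The key starting point is the factorization already used in the previous lemmas: since $g = (1+T_g)p_g$ and $h = (1+T_h)p_h$, composing gives
\begin{equation*}
gh = (1+T_g)\,p_g\,(1+T_h)\,p_h.
\end{equation*}
The obstacle is that $p_g(1+T_h)$ mixes the linear operator $p_g$ with the antilinear $T_h$, so I cannot simply multiply the $(1+T)$ factors. I would first push $p_g$ past $T_h$. Since $T_h$ is antilinear and $p_g$ is linear, one has $p_g T_h = (p_g T_h p_g^{-t})\,p_g^t$ as operators, but it is cleaner to recall from the proof of Lemma~\ref{lm:T-hat} that $p_g^{-1}T_g p_g = -\That_g$, equivalently $T_g p_g = -p_g \That_g$; the analogous relation for conjugating $T_h$ by $p_g$ is what produces the $\That_g$ appearing in the stated formula.

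The cleanest route is to rewrite $p_g(1+T_h) = \bigl(1 + p_g T_h p_g^{-1}\bigr)p_g$ and then factor the combined linear parts. I would write
\begin{equation*}
gh = (1+T_g)\bigl(1 + p_g T_h p_g^{-1}\bigr)\,p_g p_h,
\end{equation*}
but since the two antilinear insertions do not commute I must carefully expand the product $(1+T_g)(1+A)$ with $A := p_g T_h p_g^{-1}$ antilinear, collecting the linear part $1 + T_g A$ and the antilinear part $T_g + A$. The linear part $1 + T_g A = 1 + T_g p_g T_h p_g^{-1}$ should be massaged, using $T_g p_g = -p_g\That_g$, into $1 - p_g \That_g T_h p_g^{-1} = p_g(1 - \That_g T_h)p_g^{-1}$. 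This is exactly the factor that yields \eqref{eq:p-compositions}: multiplying the linear part on the right by $p_g p_h$ gives $p_g(1-\That_g T_h)p_g^{-1}\cdot p_g p_h = p_g(1-\That_g T_h)p_h$, confirming $p_{gh} = p_g(1-\That_g T_h)p_h$.

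For $T_{gh}$ I would extract the antilinear part of $gh$, namely $q_{gh} = (T_g + A)\cdot(\text{the linear part of }p_g(1+T_h)p_h)$, being careful about which factors are linear versus antilinear, and then form $T_{gh} = q_{gh}p_{gh}^{-1}$. The main bookkeeping hurdle is tracking linearity: because $A = p_g T_h p_g^{-1}$ is antilinear, the transposes and inverses that appear when inverting $p_{gh}$ interact with $A$ through the rule $A^t$ or $A^{-1}$ swapping $p_g \leftrightarrow p_g^{-t}$. I expect the $p_g^{-t}$ prefactor and the $(1-\That_g T_h)^{-1}$ in \eqref{eq:T-compositions} to emerge precisely from inverting the factor $p_g(1-\That_g T_h)$ on the right of $p_{gh}$ while commuting it past the antilinear $T_h$. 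The whole computation is a routine but sign-sensitive manipulation; the one genuine subtlety, which is the same in both Case~B and Case~F, is the consistent use of $\That_g = -p_g^{-1}T_g p_g$ (Lemma~\ref{lm:T-hat}) to convert every conjugate of $T_g$ by $p_g$ into a $\That_g$, and the verification that the invertibility of $1 - \That_g T_h$ is guaranteed exactly by the hypothesis $gh \in \rSO_*(V)$ in Case~F (it is automatic in Case~B).
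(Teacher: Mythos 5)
Your derivation of \eqref{eq:p-compositions} is correct and is essentially the paper's: both expand $gh = (1+T_g)\,p_g\,(1+T_h)\,p_h$, use $T_g p_g = -p_g\That_g$ from Lemma~\ref{lm:T-hat}, and read off the linear part. Your plan for \eqref{eq:T-compositions} --- form $T_{gh} = q_{gh}\,p_{gh}^{-1}$ with $q_{gh} = (T_g + A)\,p_g p_h = (q_g + p_g T_h)\,p_h$, where $A := p_g T_h p_g^{-1}$ --- is also the paper's route (its Eq.~\eqref{eq:T-composition-bis}), but as written it stops one identity short. Carrying your computation out literally gives
\begin{equation*}
T_{gh} \;=\; (T_g + A)\,p_g\,(1-\That_g T_h)^{-1}p_g^{-1}
\;=\; p_g\,(T_h - \That_g)\,(1-\That_g T_h)^{-1}p_g^{-1},
\end{equation*}
which is correct but not visibly the stated formula, and the $p_g^{-t}$ prefactor does \emph{not} arise merely from ``inverting the factor $p_g(1-\That_g T_h)$ while commuting it past the antilinear $T_h$''. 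To reach \eqref{eq:T-compositions} you need the algebraic rearrangement $T_h - \That_g = -\That_g(1-\That_g T_h) + (1-\That_g^2)\,T_h$ together with the identity $p_g(1-\That_g^2) = p_g + q_g\That_g = p_g^{-t}$; the latter is exactly where Proposition~\ref{pr:p-q-relations} (equivalently \eqref{eq:fancy-formula}) enters, and it is the one genuinely non-routine input that the paper's proof isolates explicitly. Once you supply it, your argument closes. Your closing remark on invertibility is right and worth keeping: by \eqref{eq:p-compositions} one has $1-\That_g T_h = p_g^{-1}p_{gh}\,p_h^{-1}$, so it is invertible automatically in Case~B and precisely under the hypothesis $gh \in \rSO_*(V)$ in Case~F.
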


\begin{proof}
First of all, notice that
\begin{align*}
p_{gh} + T_{gh} p_{gh} = gh &= (1 + T_g)p_g (1 + T_h)p_h
= p_g(1 - \That_g)(1 + T_h)p_h
\\
&= p_g p_h + p_g T_h p_h - p_g \That_g p_h - p_g \That_g T_h p_h.
\end{align*}
The linear parts of both sides yield~\eqref{eq:p-compositions}:
$$
p_{gh} = p_g p_h - p_g \That_g T_h p_h = p_g(1 - \That_g T_h)p_h.
$$

Next, from the second equality in~\eqref{eq:p-q-relations} it follows 
that
$$
p_g + q_g \That_g = p_g + q_g(\mp q_g^t p_g^{-t}) 
= (p_g p_g^t \mp q_g q_g^t)p_g^{-t} = p_g^{-t}.
$$
Now $gh = (p_g + q_g)(p_h + T_h p_h)$ also entails 
$p_{gh} = (p_g + q_g T_h) p_h$ and $q_{gh} = (q_g + p_g T_h) p_h$,
whereby
\begin{equation}
T_{gh} = q_{gh} p_{gh}^{-1} = (q_g + p_g T_h)(p_g + q_g T_h)^{-1}.
\label{eq:T-composition-bis} 
\end{equation}
Using Lemma~\ref{lm:T-hat}, one finds that
$$
p_g + q_g T_h = p_g + T_g p_g T_h  = p_g(1 - \That_g T_h),
$$
and the desired result emerges from~\eqref{eq:T-composition-bis}
and $(p_g + q_g \That_g)p_g^t = p_q p_g^t \mp q_g q_g^t = 1$:
\begin{align*}
T_{gh} &= q_g p_g^{-1} + [q_g + p_gT_h - q_g(1 - \That_g T_h)]
(1 - \That_g T_h)^{-1} p_g^{-1}
\\
&= T_g + p_g^{-t}\, T_h (1 - \That_g T_h)^{-1} p_g^{-1}.
\tag*{\qed}
\end{align*}
\hideqed
\end{proof}

\subsection{Homogeneous spaces and polarizations} 
\label{ssc:group-actions}

The conjugation action $J \mapsto g J g^{-1}$ of the symmetry group
permutes the compatible complex structures. The isotropy subgroup
of~$J$ is the unitary group $\rU_J(V)$, thus the set of compatible
complex structures $\sJ(V)$ may be identified with a homogeneous
space. In Case~B, this is $\rSp(V)/\rU_J(V)$, a noncompact connected
manifold of dimension $m(m + 1)$. In Case~F, it is $\rO(V)/\rU_J(V)$,
a compact two-component manifold of dimension $m(m - 1)$.

Another way to count the compatible complex structures is to example 
the \textit{polarizations} of the complex vector space
$$
V_\bC := V \ox_\bR \bC  \isom  V \oplus iV.
$$
We adopt the notations $w = u + iv$ and $\bar w = u - iv$ in~$V_\bC$,
for $u,v \in V$. The real bilinear forms $s$, $d$ on~$V$ determine
complex \textit{bilinear} (not sesquilinear) forms on~$V_\bC$ in the 
obvious way; and similarly any $A \in \End_\bR(V)$ amplifies to
$A \in \End_\bC(V_\bC)$. Consider the complex subspaces
$$
W_{\pm J} := \set{\half(u \mp iJu) : u \in V} \leq V_\bC \,.
$$
Since $J(u \mp iJu) = \pm i(u \mp iJu)$, $W_J$ and $W_{-J}$ are the
$(\pm i)$-eigenspaces of the amplified $J \in \End_\bC(V_\bC)$. This
entails that $V_\bC = W_J \oplus W_{-J}$ as a direct sum of two
complex subspaces of dimension~$m$. These subspaces are also
\textit{isotropic} for both $s$ and~$d$. For instance,
$$
s(u - iJu, v - iJv) = s(u,v) - i\,d(u,v) + i\,d(v,u) - s(u,v) = 0,
$$
and likewise for the other cases. Also, $W_{-J} = \wb{W_J}$, where the
bar refers to the conjugation operator $w \mapsto \bar w$ on~$V_\bC$.

\begin{defn} 
\label{df:polarization}
A complex subspace $W \leq V_\bC$ is called a \textit{polarization} 
of~$V_\bC$ if
$$
W \cap \wb{W} = \{0\}  \word{and}  W \oplus \wb{W} = V_\bC \,.
$$
We shall only consider polarizations that are \textit{isotropic} with
respect to the bilinear form~$s$, in Case~B; or isotropic with respect
to~$d$, in Case~F. If $W$ is isotropic for~$s$ [or~$d$], then so too
is the complementary polarization~$\wb{W}$.
\end{defn}

\marker
Given a polarization that is isotropic for either $s$ or~$d$, write
$w \in W$ uniquely as $w = u - iv$ with $u,v \in V$; then the
real-linear maps $w \mapsto u$, $w \mapsto v$ are one-to-one, and have
continuous inverses since the scalar product~\eqref{eq:scalar-product}
extends to~$V_\bC$ as a \textit{sesquilinear} form such that so that
$\braket{w}{w} = \braket{u}{u} + \braket{v}{v}$. The composite map
$u \mapsto w \mapsto v$ is thus an invertible $\bR$-linear operator
$J_W \in \End_\bR(V)$. In this way,
$$
W = \set{w = \half(u - i J_W u) : u \in V}.
$$
In Case~B, when $w_1,w_2 \in W$, the isotropy relation
$s(w_1, w_2) = 0$ expands to:
$$
s(u_1, u_2) - s(J_W u_1, J_W u_2)
+ i\,s(J_W u_1, u_2) - i\,s(u_1, J_W u_2) = 0.
$$
The real and imaginary parts of this equality show that $J_W$ is
symplectic and then that $J_W^2 = -1$. By a similar calculation in
Case~F, $d(w_1, w_2) = 0$ implies that $J_W$ is orthogonal with
$J_W^2 = -1$.

In Case~B, one may say that a polarization $W$ is \textit{positive}
if $s(u, J_W u) > 0$ for $u \neq 0$ in~$V$. In that case, the 
sesquilinear form $r(w_1, w_2) := 2i\,s(\bar w_1,w_2)$ satisfies
$$
r(w,w) = \ihalf\,s(u + i J_W u, u - i J_W u) = s(u, J_W u),
$$
and so $r$~is positive definite on~$W$ and is negative definite
on~$\wb{W}$. Now, since $r$ does not depend directly on the complex
structure, this also implies that $W' \cap \wb{W} = \{0\}$ whenever
$W$ and~$W'$ are positive polarizations.

We shall call a polarization $W$ \textit{admissible} if either of 
these conditions holds:
\begin{itemize}
\item
in Case~B, if $W$ is isotropic with respective to~$s$ and is positive;
\item
in Case~F, if $W$ is isotropic with respective to~$d$.
\end{itemize}
Denote the set of admissible polarizations by $\sW(V)$, in either
case. Then $W \mapsto J_W$ defines a bijection $\sW(V) \to \sJ(V)$
with the compatible complex structures on~$V$. The inverse bijection 
is $J \mapsto W_J$.

The symmetry group $\rSp(V)$ or~$\rO(V)$ acts on $\sW(V)$ by 
$W \mapsto gW \equiv \set{gw : w \in W}$, and one checks that
$J_{gW} = g\,J_W\,g^{-1}$; therefore, the aforementioned 
bijections are equivariant.

\marker
From now on, we fix a particular compatible complex structure $J$ on
either $(V,s)$ or $(V,d)$. On the complexified space $V_\bC$, one can
define a complex scalar product by
\begin{equation}
\brakett{w_1}{w_2} := 2s(\bar w_1, Jw_2) = 2d(\bar w_1, w_2). 
\label{eq:W-pairing} 
\end{equation}
If $W$ is an admissible polarization, the direct sum 
$V_\bC = W \oplus \wb{W}$ then becomes an \textit{orthogonal} direct
sum, with the respective orthogonal projectors on these subspaces
\begin{equation}
P_+ := \half(1 - iJ) \word{and} P_- := \half(1 + iJ).
\label{eq:projectors} 
\end{equation}
It is readily checked that
$$
\brakett{P_+u}{P_+v} = \braket{u}{v},  \qquad
\brakett{P_-u}{P_-v} = \braket{v}{u} \word{for}  u,v \in V.
$$
A different but equivalent treatment -- for the fermionic case only --
is given by Araki in~\cite{Araki87}, where these $P_\pm$ are called
\textit{basis projections}.

\goodbreak 

\marker
A third presentation of these homogeneous spaces uses the antilinear 
maps~$T$ introduced in subsection~\ref{ssc:symmetry-groups}.

In Case~B, one can start with any admissible polarization $W$ and 
recall that $\wb{W} \cap W_J = \{0\}$. Then any $w \in W$ can be 
written uniquely as $w = z_1 + \bar z_2$ with $z_1,z_2 \in W_J$. 
Using the scalar product \eqref{eq:W-pairing} on~$V_\bC$, one finds
$\brakett{w}{w} = \brakett{z_1}{z_1} + \brakett{\bar z_2}{\bar z_2}$,
which entails that $w \mapsto z_1$ and $w \mapsto \bar z_2$ are
injective $\bC$-linear maps with continuous inverses. The composite
mapping
\begin{equation}
T_W \: u_1 \mapsto \half(u_1 - iJu_1) =: z_1 \mapsto w 
\mapsto \bar z_2 := \half(u_2 + iJu_2) \mapsto u_2
\label{eq:T-map} 
\end{equation}
is an $\bR$-linear operator on~$V$. The same map applied to~$Ju_1$ 
gives
$$
Ju_1 \mapsto \half(Ju_1 + iu_1) = iz_1 \mapsto iw 
\mapsto i\bar z_2 = -\half(Ju_2 - iu_2) \mapsto - Ju_2,
$$
since the intermediate maps $z_1 \mapsto w \mapsto \bar z_2$ are
$\bC$-linear. Therefore, $T_WJ = -JT_W$, i.e., $T_W$ is 
\textit{antilinear} on~$V$.

Moreover, if $u_1, u'_1 \in V$, with $w' = z'_1 + \bar z'_2 
= \half(u'_1 - iJu'_1) + \half(u'_2 + iJu'_2) \in W$, then the
orthogonality of $\wb{W}$ and~$W$ shows that $T_W$ is
\textit{symmetric}, because
\begin{align}
0 = \Re r(\bar w, w')
&= 2 \Im s(w, w') = 2 \Im s(z_1 + \bar z_2, z'_1 + \bar z'_2)
\nonumber \\
&= \half \Im s(u_1 - iJu_1, u'_2 + iJu'_2)
+ \half \Im s(u_2 + iJu_2, u'_1 - iJu'_1)
\nonumber \\
&= s(u_1, Ju'_2) - s(u_2, Ju'_1) = d(u_1, u'_2) - d(u_2, u'_1)
\nonumber \\
&= d(u_1, T_W u'_1) - d(T_W u_1, u'_1).
\label{eq:TW-symmetry} 
\end{align}

Now, the calculation
$$
w = z_1 + \bar z_2 = \half(u_1 - iJu_1 + u_2 + iJu_2)
= \half(1 + T_W)(u_1 - iJu_1) = (1 + T_W) z_1
$$
shows that $1 + T_W \: W_J \to W$ is invertible; indeed, it is the
inverse of the bijection $w \to z_1$. Next,
\begin{align*}
J_W(1 + T_W)(u_1 + iu_2) &= 2 J_W w = 2iw = 2i(z_1 + \bar z_2)
\\
&= (Ju_1 - Ju_2) + i(u_1 + u_2) = J(1 - T_W)u_1 + i(u_1 + u_2),
\end{align*}
and in particular $J_W(1 + T_W)u_1 = J(1 - T_W)u_1$ for any 
$u_1 \in V$. Therefore, the complex structures $J$ and $J_W$ are
related by a \textit{Cayley transformation}:
\begin{equation}
J_W = J(I - T_W)(I + T_W)^{-1},
\label{eq:Cayley-transformation} 
\end{equation}
which can be inverted by $T_W = (J - J_W)(J + J_W)^{-1}$.

Moreover, the linear operator $1 - T_W^2$ is positive definite on~$V$,
since for $v \neq 0$:
\begin{align*}
\braket{v}{(1 - T_W^2)v}
&= d(v + T_W v, v - T_W v) = s((1 + T_W)v, J(1 - T_W)v)
\\
&= s((1 + T_W)v, J_W(1 + T_W)v) > 0, 
\end{align*}
because $W$ is a positive polarization and $(1 + T_W)v \neq 0$.

The upshot, in the bosonic case, is that each $T_W$ belongs to the set
$$
\sD(V) := \set{X \in \End_\bR V : XJ = -JX,\ X^t = X,\ 1 - X^2 > 0}.
$$
Lemma~\ref{lm:symmetry-of-T} shows that $T_g \in \sD(V)$ for all
$g \in \rSp(V)$. With a little more work~\cite{Hyperion}, it is
possible to show that $W \otto T_W$ is a bijection between $\sW(V)$
and $\sD(V)$ which is $\rSp(V)$-equivariant. The action of the group
on $\sD(V)$ is given by a variant of the composition formulas
\eqref{eq:T-compositions} and~\eqref{eq:T-composition-bis}:
\begin{equation}
g \. S := T_g + p_g^{-t}\,S(1 - \That_g S)^{-1} p_g^{-1}
= (q_g + p_g S)(p_g + q_g S)^{-1}.
\label{eq:group-action-on-T} 
\end{equation}

\marker
Matters are somewhat different in the fermionic case, where the
hypotheses leading to \eqref{eq:T-map} do not always hold.

\begin{prop} 
\label{pr:nice-kernel}
Given an admissible polarization~$W$ in Case~F, one may introduce the
linear operator $B := \half(1 - J_W J)$ on the Hilbert space $V_\bC$,
whose adjoint is $B^\7 := \half(1 - J J_W)$. Their kernels coincide:
$$
\ker B = \ker B^\7 = (W \cap \wb{W_J}) \oplus (\wb{W} \cap W_J).
$$
\end{prop}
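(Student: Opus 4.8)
The plan is to collapse the whole statement onto the single subspace $\ker(J + J_W)$ and then to split that subspace with the spectral projectors of~$J$. I would open with the adjoint claim. For the sesquilinear form $\brakett{w_1}{w_2} = 2d(\bar w_1, w_2)$ of~\eqref{eq:W-pairing}, the conjugation $w \mapsto \bar w$ sitting in the first slot shows that the Hilbert-space adjoint of any real-linear operator amplified from~$V$ to~$V_\bC$ is just its $d$-transpose: from $\brakett{w_1}{Aw_2} = 2d(\bar w_1, Aw_2) = 2d(A^t\bar w_1, w_2) = \brakett{A^t w_1}{w_2}$ (the last step because $A^t$ is real, so $\overline{A^t w_1} = A^t\bar w_1$) one reads off $A^\7 = A^t$. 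Applying this to $A = J_WJ$ and using $J^t = -J$ together with $J_W^t = -J_W$ (valid since $J_W$ is an orthogonal complex structure), I get $(J_WJ)^\7 = J^t J_W^t = JJ_W$, confirming that $B^\7 = \half(1 - JJ_W)$ is indeed the adjoint of~$B$.

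Next I would identify both kernels. Writing $Bw = 0$ as $w = J_WJw$ and left-multiplying by $-J_W$ (with $J_W^2 = -1$) gives $(J + J_W)w = 0$; likewise $B^\7 w = 0$, i.e.\ $w = JJ_Ww$, left-multiplied by $-J$ yields the same relation. Hence $\ker B = \ker(J + J_W) = \ker B^\7$ at a single stroke, with no appeal to normality of~$B$. It then remains to prove $\ker(J + J_W) = (W \cap \wb{W_J}) \oplus (\wb{W}\cap W_J)$.

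The inclusion $\supseteq$ is an eigenvalue check: on $W \cap \wb{W_J}$ one has $J_Ww = iw$ and $Jw = -iw$, so $(J + J_W)w = 0$, and symmetrically on $\wb{W}\cap W_J$; the sum is direct because $W_J \cap \wb{W_J} = \{0\}$. For the reverse inclusion, let $P_\pm := \half(1 \mp iJ)$ and $Q_\pm := \half(1 \mp iJ_W)$ be the spectral projectors of~$J$ and~$J_W$, with images $W_J, \wb{W_J}$ and $W, \wb{W}$ respectively. Given $w$ with $Jw = -J_Ww$, I would substitute this relation into $P_+w = \half(w - iJw)$ to obtain $P_+w = \half(w + iJ_Ww)$, and then compute $Q_+P_+w = \quarter(1 - iJ_W)(w + iJ_Ww) = 0$ using $J_W^2 = -1$; hence $P_+w \in \ker Q_+ = \im Q_- = \wb{W}$, so $P_+w \in W_J \cap \wb{W}$. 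The symmetric computation gives $Q_-P_-w = 0$, so $P_-w \in \wb{W_J}\cap W$, and $w = P_+w + P_-w$ lands in the asserted sum.

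The only genuine obstacle is that $J$ and $J_W$ need not commute, so $V_\bC$ does \emph{not} split into simultaneous eigenspaces and one cannot intersect the eigenspaces of the two operators directly. The device that sidesteps this is exactly the substitution of the kernel relation $Jw = -J_Ww$ \emph{before} applying the second projector, which is what forces $Q_\pm P_\pm w$ to vanish; everything else is routine bookkeeping with the two complex structures.
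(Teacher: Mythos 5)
Your proof is correct and follows essentially the same route as the paper's: both arguments reduce the kernel condition to $Jw = -J_Ww$ and then decompose $w = P_+w + P_-w$ using the spectral projectors of $J$ and $J_W$ (your computation $Q_\pm P_\pm w = 0$ is just the paper's identity $P'_\mp w = P_\pm w$ in a slightly expanded form, since after substituting the kernel relation one has $P_+w = Q_-w$ and $P_-w = Q_+w$). The explicit verification that $B^\7 = \half(1 - JJ_W)$ via the $d$-transpose is a small bonus the paper leaves unstated, and your identification $\ker B = \ker B^\7 = \ker(J + J_W)$ matches the paper's observation that the condition is symmetric under exchanging $J$ and $J_W$.
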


\begin{proof}
The expression for the kernel of~$B$ follows from
\begin{align*}
\ker B &= \set{w \in V_\bC : w = J_W J(w)}
= \set{z \in V_\bC : Jz = - J_Wz}
\\
&= \set{z \in V_\bC : \half(1 \mp iJ_W)z = \half(1 \pm iJ)z}
= \set{z \in V_\bC : P'_\pm z = P_\mp z}
\\
&= (W \cap \wb{W_J}) \oplus (\wb{W} \cap W_J),
\end{align*}
where the projectors $P'_\pm := \half(1 \mp iJ_W)$ have range spaces
$W$ and $\wb{W}$ respectively.

In the same way, $\ker B^\7 = \ker(1 - J J_W) 
= \set{z \in V_\bC : J_Wz = -Jz} = \ker B$ by exchanging $J_W$ 
and~$J$.
\end{proof}

The subspaces $W \cap \wb{W_J}$ and $\wb{W} \cap W_J$ are complex 
conjugates of each other, so they have the same dimension. This 
entails that $\dim(\ker B)$ is \textit{even}. 

Then, of course, $\dim(\wb{W} \cap W_J) = \half \dim(\ker B)$. If this
number is~$0$, then the procedure of Eq.~\eqref{eq:T-map} allows one
to define the antilinear operator $T_W \: u_1 \mapsto u_2$ as before.
The analogue of~\eqref{eq:TW-symmetry} is straightforward:
\begin{align*}
0 = 2 \Re d(w, w') &= 2 \Re d(z_1 + \bar z_2, z'_1 + \bar z'_2)
\\
&= \half \Re d(u_1 - iJu_1, u'_2 + iJu'_2)
+ \half \Re d(u_2 + iJu_2, u'_1 - iJu'_1)
\\
&= d(u_1, u'_2) + d(u_2, u'_1) = d(u_1, T_W u'_1) + d(T_W u_1, u'_1),
\end{align*}
which says that $T_W$ is skewsymmetric in the fermionic case. We may
denote by $\rSk(V)$ the real vector space of \textit{antilinear
skewsymmetric operators} on~$V$, so that $T_W \in \rSk(V)$.

The Cayley transformation formula~\eqref{eq:Cayley-transformation} 
also holds in Case~F, since $(1 + T_W)$ is invertible if it is 
defined, i.e., under the proviso that $\wb{W} \cap W_J = \{0\}$.

Many of these algebraic considerations remain valid when $V$ is
infinite dimensional, provided that $J - J_W$ is a Hilbert--Schmidt
operator; this is a version of the Shale--Stinespring condition
\cite{ShaleS65}. In that case, $B$ is a Fredholm operator of
index~$0$, with $\ker(\wb{W} \cap W_J)$ being a finite-dimensional
subspace. See \cite{Rhea, Araki87} for the full story.

The orthogonal group $\rO(V)$ acts transitively on the admissible 
polarizations. Thus we can put $W = g W_J$ and $J_W = g J g^{-1}$
as in Case~$B$. Now 
\begin{equation}
B = \half(1 - g J g^{-1} J) = g \. \half(g^t - J g^t J) = g p_g^t
\label{eq:defect-operator} 
\end{equation}
so that $\ker B = \{0\}$ if and only if $p_g$ is invertible. 

The homogeneous space of compatible complex structures is stratified, 
with the strata labelled by
$k := \half \dim(\ker B) \in \{0,1,\dots,m\}$; the top number occurs
only when $g J g^{-1} = -J$, so that $W = W_{-J}$. The stratum with
$k = 0$ is an open dense subset (a ``big cell''), parametrized by
$T_g \in \rSk(V)$. Appendix~\ref{app:pin-repn} treats the cases where
$k > 0$.

Moreover, when $J_W = g T g^{-1}$, the relation
\eqref{eq:defect-operator} shows that the $\bC$-linear operators $p_g$
and $p_g^t$ on~$V_J$ satisfy
$\dim(\ker p_g) = \dim(\ker p_g^t) = \half \dim(\ker B)$.

The group-action formula \eqref{eq:group-action-on-T} continues to
hold in the fermionic case, but only as a \textit{local} action. If
$g \in \rSO_*(V)$ so that $p_g^{-1}$ exists, then $p_g + q_g S$ is
invertible for $S \in \rSk(V)$ in a neighbourhood of~$0$, and then
$g \. S$ also lies in~$\rSk(V)$. It is easy to check that
$gh\.S = g\.(h\.S)$ whenever $h\.S$ and $gh\.S$ are defined.


\section{Fock spaces} 
\label{sec:Fock-spaces}

The Fock spaces come in two types, bosonic and fermionic. The first
are function spaces; the second are (the underlying vector spaces of)
exterior algebras. Their construction is well known; we briefly
recapitulate it, in order to focus on their \textit{Gaussian vectors}.
These will be parametrized by $T \in \sD(V)$ in Case~B, and by
$T \in \rSk(V)$ in Case~F.

\subsection{The bosonic Fock space} 
\label{ssc:bosonic-Fock}

Case~B starts with a symplectic vector space $(V,s)$ of real
dimension~$2m$, equipped with a fixed compatible complex structure
$J$, as in Definition~\ref{df:complex-structure}(a). The bosonic
one-particle space $V$ is thus a complex Hilbert space of
dimension~$m$ with scalar product~\eqref{eq:scalar-product}.

\begin{defn} 
\label{df:bosonic-Fock}
The \textit{symmetric product} $u_1 \v u_2 \vyv u_k$ of vectors
$u_1,u_2,\dots,u_k \in V$ is obtained by symmetrizing their tensor
product $u_1 \ox u_2 \oxyox u_k$. These generate the bosonic
$k$-particle space $V^{\v k}$; their direct sum is the symmetric
algebra $S(V):= \bigoplus_{k=0}^\infty V^{\v k}$. Here
$V^{\v 0} := \bC\,\Om$ is just a one-dimensional space.

This is a prehilbert space under the scalar product determined by
\begin{equation}
\braket{u_1 \v u_2 \vyv u_k}{v_1 \v v_2 \vyv v_l}
:= \dl_{kl}\, \per[\braket{u_i}{v_j}],
\label{eq:B-multivector-pairing} 
\end{equation}
where `$\per$' means the permanent of a square matrix; and by
declaring $\Om$ to be a unit vector in~$V^{\v 0}$. The completion
of~$S(V)$ to a Hilbert space is the \textit{bosonic Fock space}
$\sB(V)$.
\end{defn}

Since the symmetric product is commutative, it is possible, indeed 
advisable, to identify $\sB(V)$ with a space of functions. Firstly,
one can identify any vector $v \in V = V^{\v 1}$ with the
\textit{antilinear} mapping $u \mapsto 2^{-1/2} \braket{u}{v}$. This 
normalization, while unusual, turns out to be convenient. Next, a 
$k$-vector $v_1 \v v_2 \vyv v_k \in V^{\v k}$ can be identified with 
the homogeneous polynomial of degree~$k$,
$$
u \mapsto \frac{1}{2^{r/2}\,r!}\,
\braket{u \v u \vyv u}{v_1 \v v_2 \vyv v_k}.
$$

If $\{e_1,\dots,e_m\}$ is an orthonormal basis of~$V$, such 
polynomials coming from multivectors 
$e_{i_1} \v e_{i_2} \vyv e_{i_k}$ form an orthonormal basis
$\{\eps_\al\}$ for~$\sB(V)$, labelled by multi-indices~$\al$. In this
way $\sB(V)$ can be identified with the \textit{Segal--Bargmann space}
over~$\bC^m$, see \cite{Segal60, Bargmann61}. We follow the 
recommendation of Segal~\cite{Segal78, BaezSZ92} to view it as a 
space of \textit{antiholomorphic} (rather than
holomorphic~\cite{Bargmann61}) entire functions on~$V$.

\begin{defn} 
\label{df:Segal-Bargmann}
This complex-wave representation (Segal's term) of~$\sB(V)$ has the 
following structure:
$$
\sB(V):= \set{F\: V \to \bC \text{ antiholomorphic} : \|F\| < \infty},
$$
where $\|F\|$ denotes the $L^2$-norm with respect to the Gaussian
measure:
\begin{equation}
\|F\|^2 := \int_V |F(u)|^2 \,e^{-\half\braket{u}{u}} \,du 
\label{eq:BV-norm} 
\end{equation}
and $du := (2\pi)^{-m} \,d\la(u)$ rescales the standard Lebesgue
measure $d\la(u)$ on $V \isom \bR^{2m}$.
\end{defn}

One checks that identifying $v \in V$ with the antilinear function
$F_v(u) := 2^{-1/2} \braket{u}{v}$ yields
$\|F_v\| = \braket{v}{v} = \|v\|$, so that the inclusion 
$V \hookto \sB(V)$ is an isometry. If $T \in \sD(V)$, the function
$(u,v) \mapsto \braket{u}{Tv}$ is antilinear in \textit{both}
variables, so the quadratic polynomial
\begin{equation}
H_T(u) := \half \braket{u}{Tu}
\label{eq:bosonic-hamiltonian} 
\end{equation}
is antiholomorphic and defines a vector $H_T \in \sB(V)$.

\marker
It is important to recall~\cite{Bargmann61} that $\sB(V)$ is a
\textit{reproducing kernel Hilbert space}: one finds that
\begin{equation}
|F(u)|^2 \leq \|F\|^2 e^{\half \braket{u}{u}}
\word{for all}  F \in \sB(V).
\label{eq:Bargmann-inequality} 
\end{equation}
A related feature of the bosonic Fock space is the availability of a
family of \textit{principal vectors} (in Bargmann's
terminology~\cite[Sect.~1c]{Bargmann61}; in the physics literature,
they are often called \textit{coherent states}). These are the
antiholomorphic entire functions $E_v(u) := e^{\half\braket{u}{v}}$,
for each $v \in V$. They belong to $\sB(V)$ since they have finite
norm:
\begin{equation}
\|E_v\|^2 
= \int_V \ovl{E_v(u)}\, E_v(u)\, e^{-\half\braket{u}{u}} \,du
= \int_V e^{\half\braket{v}{u} + \half\braket{u}{v}
- \half\braket{u}{u}} \,du = e^{\half\braket{v}{v}}.
\label{eq:principal-vectors} 
\end{equation}
A similar calculation leads to the useful identity:
$$
\braket{E_v}{E_w} = e^{\half\braket{v}{w}}.
$$
While the principal vectors are not mutually orthogonal, they do form 
a total set in~$\sB(V)$, sometimes called an ``overcomplete basis''.

The principal vectors also exhibit the reproducing kernel, because
\begin{equation}
\braket{E_v}{F} = F(v)  \word{for all} v \in V.
\label{eq:reproducing-kernel} 
\end{equation}
This is found by a straightforward integration by parts when 
$F = \eps_\al$ is a basis monomial, and the general case then follows
from Parseval's equality. 

This reproducing property shows that any bounded operator on $\sB(V)$ 
possesses an integral kernel. The following proposition was noticed by
Robinson and Rawnsley \cite[Thm.~1.7]{RobinsonR89}.

\begin{prop} 
\label{pr:operator-kernel}
If $A$ is a bounded operator in $\sB(V)$, there exists a
sesquiholomorphic function $K_A(u,v)$ on $V \x V$, such that
\begin{subequations}
\label{eq:operator-kernel} 
\begin{equation}
AF(u) = \int K_A(u,v) F(v)\, e^{\half\braket{v}{v}} \,dv.
\label{eq:operator-kernel-action} 
\end{equation}
for any $F \in \sB(V)$. This integral kernel is given by
\begin{equation}
K_A(u,v) := \braket{E_u}{AE_v} = AE_v(u).
\label{eq:operator-kernel-formula} 
\end{equation}
\end{subequations}
\end{prop}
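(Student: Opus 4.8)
The plan is to build everything on the reproducing property~\eqref{eq:reproducing-kernel}, together with the fact, visible from~\eqref{eq:Bargmann-inequality}, that evaluation at a point is a bounded linear functional on $\sB(V)$ (indeed it is the functional $\braket{E_u}{\cdot}$). First I would note that, since $A$ is bounded, $AE_v$ is a genuine vector of $\sB(V)$ for each $v \in V$; applying~\eqref{eq:reproducing-kernel} to $G := AE_v$ gives $\braket{E_u}{AE_v} = (AE_v)(u)$, so the two expressions in~\eqref{eq:operator-kernel-formula} coincide and $K_A$ is well defined.

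Next I would verify sesquiholomorphy. As a function of~$u$ with $v$ fixed, $K_A(u,v) = (AE_v)(u)$ is antiholomorphic, because $AE_v \in \sB(V)$ and every element of the Segal--Bargmann space is antiholomorphic. For the dependence on~$v$ I would pass to the adjoint: since
\[
\braket{E_u}{AE_v} = \braket{A^\7 E_u}{E_v}
= \ovl{\braket{E_v}{A^\7 E_u}} = \ovl{(A^\7 E_u)(v)},
\]
and $A^\7 E_u \in \sB(V)$ is antiholomorphic in~$v$, its complex conjugate is holomorphic in~$v$. Thus $K_A$ is antiholomorphic in its first slot and holomorphic in its second, as required.

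The integral formula is then essentially a restatement of the inner product. Polarizing the norm~\eqref{eq:BV-norm} and rewriting both factors by means of~\eqref{eq:reproducing-kernel} yields the ``resolution of the identity''
\[
\braket{G}{F} = \int \ovl{G(v)}\,F(v)\,e^{-\half\braket{v}{v}}\,dv
= \int \braket{G}{E_v}\braket{E_v}{F}\,e^{-\half\braket{v}{v}}\,dv
\]
for all $F,G \in \sB(V)$, the Gaussian weight being inherited directly from~\eqref{eq:BV-norm}. Taking $G := A^\7 E_u$ and using $\braket{A^\7 E_u}{F} = \braket{E_u}{AF} = (AF)(u)$, together with $\braket{A^\7 E_u}{E_v} = \braket{E_u}{AE_v} = K_A(u,v)$ and $\braket{E_v}{F} = F(v)$, converts this identity into the asserted~\eqref{eq:operator-kernel-action}. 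Absolute convergence of the integral is automatic, since its integrand is exactly that of the convergent inner product $\braket{A^\7 E_u}{F}$.

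I do not expect a serious obstacle once the reproducing property is in hand: the whole statement reduces to that identity and to a polarization of the defining norm, and the apparently analytic step (a resolution of the identity) is in fact the scalar inner-product formula, so no vector-valued integration is needed. The only point genuinely demanding care is the separate holomorphy in each variable, and the adjoint rewriting $K_A(u,v) = \ovl{(A^\7 E_u)(v)}$ reduces even that to the antiholomorphy of a single element of~$\sB(V)$.
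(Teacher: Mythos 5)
Your proof is correct and follows essentially the same route as the paper's: both pass to the adjoint via $\braket{E_u}{AF} = \braket{A^\7 E_u}{F}$, expand that inner product as the Gaussian-weighted integral from the definition of the norm, identify $\ovl{(A^\7 E_u)(v)} = \braket{E_u}{AE_v}$ by the reproducing property, and obtain holomorphy in~$v$ from the conjugate of the antiholomorphic element $A^\7 E_u \in \sB(V)$. (Incidentally, your weight $e^{-\half\braket{v}{v}}$ agrees with the paper's own derivation; the $e^{+\half\braket{v}{v}}$ in the displayed statement appears to be a sign typo.)
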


\begin{proof}
Denote by $A^\7$ the adjoint of the bounded operator~$A$, and let
$F \in \sB(V)$. Then
\begin{align*}
AF(u) &= \braket{E_u}{A F} = \braket{A^\7 E_u}{F}
= \int_V \ovl{A^\7 E_u(v)}\, F(v)\, e^{-\half\braket{v}{v}} \,dv
\\
&= \int_V \braket{A^\7 E_u}{E_v}\, F(v)\, e^{-\half\braket{v}{v}} \,dv
= \int_V \braket{E_u}{A E_v}\, F(v)\, e^{-\half\braket{v}{v}} \,dv.  
\end{align*}
On defining $K_A(u,v)$ by~\eqref{eq:operator-kernel-formula}, it is 
clear that \eqref{eq:operator-kernel-action} holds, and that 
$K_A(u,v)$ is antiholomorphic in~$u$. Moreover, since
$$
\ovl{K_A(u,v)} = \braket{AE_v}{E_u} = \braket{E_v}{A^\7E_u}
= K_{A^\7}(v,u), 
$$
it follows that $K_A(u,v)$ is holomorphic in~$v$.
\end{proof}

\subsection{The fermionic Fock space} 
\label{ssc:fermionic-Fock}

Case F begins with an orthogonal vector space $(V,d)$, again of real
dimension~$2m$, with a fixed compatible~$J$ obeying
Definition~\ref{df:complex-structure}(b). Again $V$
with~\eqref{eq:scalar-product} becomes an $m$-dimensional complex
Hilbert space.

\begin{defn} 
\label{df:fermionic-Fock}
The \textit{exterior product} $u_1 \w u_2 \wyw u_k$ of $k$ vectors
in~$V$ is obtained by skewsymmetrizing $u_1 \ox u_2 \oxyox u_k$. These
generate the fermionic $k$-particle space $V^{\w k}$; their direct sum
is the exterior algebra $\La(V) := \bigoplus_{k=0}^m V^{\w k}$. Again
we declare $V^{\w 0} := \bC\,\Om$ to be a one-dimensional space. This
$\La(V)$ is already a Hilbert space, of \textit{finite} dimension
$2^m$, under the scalar product determined by
\begin{equation}
\braket{u_1 \w u_2 \wyw u_k}{v_1 \w v_2 \wyw v_l}
:= \dl_{kl}\, \det[\braket{u_i}{v_j}],
\label{eq:F-multivector-pairing} 
\end{equation}
and by declaring $\Om$ to be a unit vector in~$V^{\w 0}$. We baptize
$\La(V)$ with this scalar product as the \textbf{fermionic Fock
space} $\sF(V)$.
\end{defn}

Given an $\{e_1,e_2,\dots,e_m\}$ an \textit{oriented} orthonormal
basis of $V$, the exterior products 
$\eps_K:= e_{k_1} \w e_{k_2} \wyw e_{k_r}$ form an orthonormal basis
for $\sF(V)$, indexed by all $r$-element subsets
$K = \{k_1,k_2,\dots,k_r\} \subseteq \{1,2,\dots,m\}$ listed in
increasing order. We put $\eps_\emptyset:= \Om$ by convention.

Given $T \in \rSk(V)$, one can define a fermionic analogue of
\eqref{eq:bosonic-hamiltonian} by the finite sum
\begin{equation}
H_T:= \sum_{i,j=1}^m \braket{e_i}{Te_j}\, e_i \w e_j.
\label{eq:fermionic-hamiltonian} 
\end{equation}
One checks that this $H_T \in \sF(V)$ does not depend on the chosen
orthonormal basis for~$V$, nor on its orientation, since
$\braket{e_j}{Te_i}\, e_j \w e_i = \braket{e_i}{Te_j}\, e_i \w e_j$
by skewsymmetry of~$T$.

\subsection{Gaussian vectors in Fock space}

The metaplectic and spin representations will presently be defined as 
projective representations of the symmetry groups on the Fock spaces
which permute a distinguished set of ``Gaussian'' vectors (up to
certain multiplicative factors). These are labelled by antilinear 
operators $T$ in~$\sD(V)$ or in~$\rSk(V)$, respectively; and in both 
cases are quadratic exponentials.

\begin{defn} 
\label{df:Gaussian-vector}
In each Fock space, the Gaussian $f_T$ is the exponential 
of $\half H_T$, defined as follows:
\begin{subequations}
\label{eq:Gaussian-vector} 
\begin{enumerate}
\item 
If $T \in \sD(V)$, then $f_T \in \sB(V)$ is defined by
\begin{equation}
f_T(u) := \exp(\half H_T(u)) 
= \exp\bigl( \quarter \braket{u}{Tu} \bigr).
\label{eq:Gaussian-vector-B} 
\end{equation}
Its norm in $\sB(V)$ equals $\|f_T\| = \det^{-1/4}(1 - T^2)$.
\item 
If $T \in \rSk(V)$, then $f_T \in \sF(V)$ is defined by
\begin{equation}
f_T := \exp^\w(\half H_T)
:= \sum_{0 \leq 2r \leq m} \frac{1}{2^r\,r!}\, H_T^{\w r}.
\label{eq:Gaussian-vector-F} 
\end{equation}
Its norm in $\sF(V)$ equals $\|f_T\| = \det^{+1/4}(1 - T^2)$.
\qef
\end{enumerate}
\end{subequations}
\hideqef
\end{defn}

To obtain these formulas for the norms of Gaussians, we again separate
the two cases.

Most relevant norm computations on the bosonic Fock space $\sB(V)$
reduce to the calculation of certain Gaussian integrals. We restate
the basic lemma, see \cite[Sect.~1a]{Bargmann61} or Theorem~1 in
Appendix~A of~\cite{Folland89}, adapted to our normalized Lebesgue
measure $dx := (2\pi)^{-m} \,d\la(x)$.

\begin{lema} 
\label{lm:Gaussian-integral}
If $B \in M_m(\bC)$ is a complex symmetric matrix whose real part
is positive definite and if $c \in \bC^m$, then 
\begin{equation}
\int_{\bR^m} e^{-\half x\.Bx + c\.x} \,dx
= \frac{1}{\sqrt{\det B}}\, e^{\half c\.B^{-1}c},
\label{eq:Gaussian-integral} 
\end{equation}
where $\sqrt{\det B}$ is defined by analytic continuation from the 
square root which is positive when $B$ is real.
\end{lema}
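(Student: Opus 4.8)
The plan is to establish the identity first for \emph{real} data---$B$ real symmetric and positive definite, $c \in \bR^m$---by elementary means, and then to extend it to all admissible complex $(B,c)$ by analytic continuation, invoking the identity theorem in several complex variables. Both sides will be exhibited as holomorphic functions of the independent entries of~$B$ and the components of~$c$ on the open region
$$
\mathcal{G} := \set{(B,c) : B = B^t,\ \Re B > 0}
\subseteq \bC^{m(m+1)/2} \x \bC^m,
$$
which I would note is \emph{convex} (the symmetry constraint is linear and ``$\Re B > 0$'' is a convex condition), hence connected and simply connected.

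For the real case I would complete the square, writing
$-\half x\.Bx + c\.x = -\half (x - B^{-1}c)\.B(x - B^{-1}c) + \half c\.B^{-1}c$. Since $B^{-1}c$ is real here, the translation $x \mapsto x + B^{-1}c$ preserves the Lebesgue measure and clears the linear term at the cost of the prefactor $e^{\half c\.B^{-1}c}$. There remains $\int_{\bR^m} e^{-\half y\.By}\,dy$, which I would evaluate by diagonalizing $B = O^t D O$ with $O$ orthogonal and $D = \diag(d_1,\dots,d_m)$, $d_i > 0$; the substitution $y \mapsto Oy$ (again measure-preserving) factors the integral into one-dimensional Gaussians, each contributing $\int_\bR e^{-\half d_i t^2}\,(2\pi)^{-1/2}\,dt = d_i^{-1/2}$. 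Their product is $(\det B)^{-1/2}$ with the positive root, matching the claimed formula on the real locus.

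For the extension I would verify that both sides are holomorphic on~$\mathcal{G}$. Holomorphy of the left-hand side follows by dominated convergence and Morera's theorem (or by differentiating under the integral sign), using $\bigl|e^{-\half x\.Bx + c\.x}\bigr| = e^{-\half x\.(\Re B)x + (\Re c)\.x}$, which is integrable because $\Re B > 0$ forces Gaussian decay uniformly on compact subsets of~$\mathcal{G}$. For the right-hand side the crucial point is that $\det B \neq 0$ throughout~$\mathcal{G}$: if $Bv = 0$ for some $v \neq 0$, then, writing $B = \Re B + i\,\Im B$ with both parts real symmetric, one gets $\Re(\bar v^t B v) = \bar v^t(\Re B)\,v > 0$, a contradiction. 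Since $\mathcal{G}$ is simply connected and $\det$ is nonvanishing on it, $\sqrt{\det B}$ admits a single-valued holomorphic branch, normalized to be positive on real positive-definite matrices; and $e^{\half c\.B^{-1}c}$ is manifestly holomorphic. The two holomorphic functions agree on the totally real slice $\mathcal{G} \cap \bR^{m(m+1)/2 + m}$ by the real case, and since this slice is a uniqueness set, the identity theorem yields equality on all of~$\mathcal{G}$.

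The main obstacle is precisely the analytic-continuation step, and within it the single-valued definition of $\sqrt{\det B}$ promised in the statement: this is exactly what the convexity (hence simple connectivity) of~$\mathcal{G}$, together with the nonvanishing of $\det B$, secures. The elementary real computation, by contrast, is routine.
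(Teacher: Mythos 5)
Your proof is correct and complete, and it follows the same overall strategy that the paper intends --- establish the formula for real data, then continue analytically --- but it is considerably more explicit than what the paper actually writes down. The paper's proof is essentially only a specification of the branch of the square root: it diagonalizes the real case and then \emph{defines} $\det^{-1/2}B := \beta_1^{-1/2}\cdots\beta_m^{-1/2}$ through the eigenvalues $\beta_i$ of~$B$ (which satisfy $\Re\beta_i>0$ because $\Re B>0$), each factor taken with positive real part; the evaluation of the integral itself is deferred to the cited sources (Bargmann's Sect.~1a, Folland's Appendix~A). You instead obtain the branch globally, from the nonvanishing of $\det$ on the convex (hence simply connected) parameter domain~$\mathcal{G}$, normalized on the real locus; the two determinations coincide, being continuous branches of the same multivalued function on a connected set that agree at real positive-definite~$B$. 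What your version buys is a self-contained justification of the continuation step --- locally uniform Gaussian domination for holomorphy of the left-hand side, nonvanishing of $\det B$ via $\Re(\bar v^{\,t}Bv)=\bar v^{\,t}(\Re B)v>0$, and the identity theorem applied on the totally real slice --- none of which the paper spells out. One cosmetic point: the paper's normalization $dx=(2\pi)^{-m}\,d\la(x)$ is stated for $V\isom\bR^{2m}$; on $\bR^m$ the factor that makes \eqref{eq:Gaussian-integral} come out right is $(2\pi)^{-m/2}$, which is exactly what your one-dimensional factors $(2\pi)^{-1/2}$ use, so your computation is consistent with the intended convention.
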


\begin{proof}
If $B$ is real (thus positive definite), the quadratic form
$x \mapsto x\.Bx$ can be diagonalized; since $\det B > 0$, take the
positive square root $\sqrt{\det B} > 0$. In the general case, write
$\det B = \bt_1\cdots\bt_m$ where the eigenvalues $\bt_i$ of~$B$ have
$\Re\bt_i > 0$; and choose
$\det^{-1/2} B := \bt_1^{-1/2} \cdots \bt_m^{-1/2}$ where each
$\bt_i^{-1/2}$ is the square root of~$\bt_i^{-1}$ with
$\Re \bt_i^{-1/2} > 0$.
\end{proof}

Now take $T \in \sD(V)$, recalling that $T^2$ is $\bC$-linear on $V$
with $1 - T^2 > 0$. Denote its eigenvalues by $1 - \la_k^2$, where
$0 \leq \la_k^2 < 1$, for $k = 1,\dots,m$; with matching eigenvectors
$\{e_1,\dots,e_m\}$ forming an orthonormal basis for~$V$. Then
$\{e_1,Je_1,\dots,e_m,Je_m\}$ is a real-linear basis for~$V$ and each
subspace $\linspan{e_k,Je_k}$ reduces~$T$. Since $T$ is antilinear and
symmetric, with eigenvalues $\{\pm \la_k\}$, we can select these $e_k$
so that $T e_k = \la_k\,Je_k$ and $T Je_k = \la_k\,e_k$. Accordingly,
$T(e_k \pm Je_k) = \pm\la_k(e_k \pm Je_k)$.

Remark that, as a $\bC$-linear operator on $V$,
$\det(1 - T^2) = \prod_{k=1}^m (1 - \la_k^2) > 0$.

Using this $T$-adapted basis, take $u \in V$ and put
$\al_k:= \braket{e_k}{u} \in \bC$, with
$\bar\al_k = \braket{u}{e_k}$ also. This implies that 
$$
\braket{Te_k}{u} = \braket{\la_k Je_k}{u} = -i\la_k \braket{e_k}{u}
= -i \la_k \al_k,
$$
and also $\braket{u}{Te_k} = +i \la_k \bar\al_k$. From the
antilinearity and symmetry of $T$ it follows that
\begin{align*}
\braket{u}{Tu} + \braket{Tu}{u} 
&= \sum_{k=1}^m \braket{u}{e_k} \braket{e_k}{Tu} 
+ \braket{Tu}{e_k} \braket{e_k}{u}
\\
&= \sum_{k=1}^m \braket{u}{e_k} \braket{u}{Te_k}
+ \braket{Te_k}{u} \braket{e_k}{u}
= \sum_{k=1}^m i\la_k(\bar\al_k^2 - \al_k^2) \in \bR.
\end{align*}

We can now compute the norm of the Gaussian $f_T$ defined 
by~\eqref{eq:Gaussian-vector-B}. Indeed,
\begin{align}
\|f_T\|^2 := \int_V |f_T(u)|^2 e^{-\half\braket{u}{u}} \,du
&= \int_V \exp \bigl( \quarter\braket{u}{Tu} + \quarter\braket{Tu}{u}
- \half\braket{u}{u} \bigr) \,du
\nonumber\\
&= \prod_{k=1}^m \int_\bC \exp \half \bigl\{ 
\ihalf \la_k(\bar\al_k^2 - \al_k^2) - \bar\al_k \al_k \bigr\} \,d\al_k
\nonumber \\
&= \prod_{k=1}^m (1 - \la_k^2)^{-1/2} = \det^{-1/2}(1 - T^2).
\label{eq:Gaussian-norm-B} 
\end{align}
Here the formula \eqref{eq:Gaussian-integral} has been applied with
$c = 0$ and $B = \smash{\twobytwo{1}{-\la_k}{-\la_k}{1}}$, which is
positive definite by the hypothesis $T \in \sD(V)$. See also
\cite[Sect.~2]{Kamuk}.

The scalar product of two Gaussians is obtained from this formula 
\eqref{eq:Gaussian-norm-B} by polarization:
\begin{equation}
\braket{f_S}{f_T} = \det^{-1/2}(1 - TS)  \word{for}  S,T \in \sD(V).
\label{eq:Gaussian-pairing-B} 
\end{equation}
This is less obvious than it looks, since one must check that the
linear operator $1 - TS$ is invertible and then determine the
(complex) square root of $\det^{-1}(1 - TS)$. We follow Robinson and
Rawnsley~\cite[Thm.~1.9]{RobinsonR89}. For $v \in V$ and 
$S,T \in \sD(V)$, there holds:
\begin{align*}
2 \Re\braket{v}{(1 - TS)v}
&= \braket{v}{(1 - TS)v} + \braket{(1 - TS)v}{v}
\\
&= \braket{v}{(1 - S^2)v} + \braket{(1 - T^2)v}{v}
+ \braket{v}{(S - T)Sv} - \braket{T(S - T)v}{v}
\\
&= \braket{v}{(1 - S^2)v} + \braket{(1 - T^2)v}{v}
+ \braket{Sv}{(S - T)v} - \braket{Tv}{(S - T)v}
\\
&= \braket{v}{(1-S^2)v} + \braket{v}{(1-T^2)v} + \|Sv - Tv\|^2 \geq 0,
\end{align*}
since the antilinear operators $(S - T)$ and $T$ are symmetric.
Therefore $(1 - TS)v = 0$ forces 
$\braket{v}{(1 - S^2)v} = \braket{v}{(1 - T^2)v} = 0$ and hence
$v = 0$; so $1 - TS$ is invertible. Also, 
$\Re\braket{v}{(1 - TS)v} > 0$ for $v \neq 0$. The invertible but not 
necessarily symmetric operators $B \in \End_\bC(V)$ for which 
$\Re \braket{v}{Bv} > 0$ for $v \neq 0$ form a contractible open 
neighbourhood of~$1$ in $\rGL_\bC(V)$ so there is a (unique)
continuous map $B \mapsto \det^{-1/2} B$ on this domain such that 
$(\det^{-1/2} B)^{-2} = \det B$ and $\det^{-1/2}(1) = +1$. 
And clearly, $\det^{-1/2} B > 0$ when $B$ is positive definite.

The matrix elements of $1 - TS$ (in any orthonormal basis
$\{u_1,\dots,u_m\}$ for~$V$) are
$$
\braket{u_j}{(1 - TS)u_k} 
= \braket{u_j}{u_k} - \braket{Su_k}{Tu_j}.
$$
The second term on the right depends linearly on~$T$ and antilinearly 
on~$S$; thus $\det(1 - TS)$, its reciprocal $\det^{-1}(1 - TS)$ and 
the chosen square root $\det^{-1/2}(1 - TS)$ all depend 
holomorphically on~$T$ and antiholomorphically on~$S$. Thus, the 
formula~\eqref{eq:Gaussian-pairing-B}, rewritten as
$$
\int_V \exp \bigl( \quarter\braket{u}{Tu} + \quarter\braket{Su}{u}
- \half\braket{u}{u} \bigr) \,du
= \det^{-1/2}(1 - TS),
$$
follows by analytic continuation from the diagonal case $T = S$,
which has already been established in~\eqref{eq:Gaussian-norm-B}.

One may generalize the preceding Gaussian integrals by adding 
first-degree terms to the exponents. Consider the expression
$$
I(T;v,w) := \int_V \exp \quarter \bigl\{ \braket{u}{Tu} 
+ \braket{Tu}{u} + 2\braket{u}{v} + 2\braket{w}{u} \bigr\} 
e^{-\half\braket{u}{u}} \,du.
$$
Using the $T$-adapted orthonormal basis $\{e_1,\dots,e_m\}$, and
taking $\al_k:= \braket{e_k}{u}$, $\bt_k:= \braket{e_k}{v}$,
$\ga_k:= \braket{e_k}{w}$, the preceding integral factors as
\begin{align*}
I(T;v,w) &= \prod_{k=1}^m \int_\bC \exp \quarter \bigl\{
i\la_k (\bar\al_k^2 - \al_k^2) + 2 \bar\al_k \bt_k 
+ 2 \al_k \bar\ga_k \bigr\} e^{-\half|\al_k|^2} \,d\al_k 
\\
&= \prod_{k=1}^m (1 - \la_k^2)^{-1/2}
\exp\bigl[ \quarter(1 - \la_k^2)^{-1} \{
- i\la_k \bt_k^2 + 2 \bt_k \bar\ga_k + i\la_k \bar\ga_k^2 \} \bigr]
\end{align*}
using \eqref{eq:Gaussian-integral}. The exponent on the right-hand
side is a sum of three matrix elements:
$$
\quarter \bigl\{ \braket{T(1 - T^2)^{-1} v}{e_k} \braket{e_k}{v}
 + 2 \braket{w}{e_k} \braket{e_k}{(1 - T^2)^{-1} v}
 + \braket{w}{e_k} \braket{e_k}{T(1 - T^2)^{-1} w} \bigr\}.
$$
Since $\sum_{k=1}^m \ketbra{e_k}{e_k} = 1$, the integral $I(T;v,w)$
becomes
\begin{subequations}
\label{eq:big-integral} 
\begin{equation}
\det^{-1/2}(1 - T^2)\, \exp \quarter \bigl\{ 
\braket{T(1 - T^2)^{-1} v}{v}  + 2 \braket{w}{(1 - T^2)^{-1} v}
 + \braket{w}{T(1 - T^2)^{-1} w} \bigr\}.
\label{eq:big-integral-diagonal} 
\end{equation}

It is again possible to polarize this calculation to compute a similar
integral involving two distinct elements $S,T \in \sD(V)$. The result
must depend holomorphically on~$T$ and antiholomorphically on~$S$, and
must coincide with~\eqref{eq:big-integral-diagonal} when $S = T$. 
Those considerations yield the following formula.

\begin{lema} 
\label{lm:big-integral} 
The general Gaussian integral, depending on $S,T \in \sD(V)$, is
evaluated as
\begin{align}
& I(S,T;v,w) := \int_V \exp \quarter \bigl\{ \braket{u}{Tu} 
+ \braket{Su}{u} + 2\braket{u}{v} + 2\braket{w}{u} \bigr\} 
e^{-\half\braket{u}{u}} \,du
\label{eq:big-integral-general} 
\\
&= \det^{-1/2}(1 - TS)
\exp \quarter \bigl\{ 
\braket{S(1 - TS)^{-1} v}{v}  + 2 \braket{w}{(1 - TS)^{-1} v}
 + \braket{w}{(1 - TS)^{-1}T w} \bigr\}
\nonumber \\
&= \det^{-1/2}(1 - TS)
\exp \quarter \bigl\{ 
\braket{S(1 - TS)^{-1} v}{v}  + 2 \braket{w}{(1 - TS)^{-1} v}
 + \braket{w}{T(1 - ST)^{-1} w} \bigr\}.
\nonumber
\end{align}
where $I(T,T;v,w) \equiv I(T;v,w)$.
\qed
\end{lema}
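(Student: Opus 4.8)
The plan is to establish \eqref{eq:big-integral-general} by polarization, exactly as the pairing \eqref{eq:Gaussian-pairing-B} was obtained from its diagonal value \eqref{eq:Gaussian-norm-B}. Both sides of \eqref{eq:big-integral-general}, regarded as functions of $(S,T)$ for fixed $v,w \in V$, will be shown to be holomorphic in~$T$ and antiholomorphic in~$S$; since the left-hand side reduces on the diagonal $S = T$ to the already-computed integral \eqref{eq:big-integral-diagonal}, and the right-hand side manifestly does the same, the two must agree for all $S,T \in \sD(V)$ by the identity theorem.

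To organize the analyticity, fix an orthonormal basis $\{e_1,\dots,e_m\}$ of~$V$ and parametrize $T$ and~$S$ by the symmetric arrays $t_{ij} := \braket{e_i}{Te_j}$ and $\sigma_{ij} := \braket{e_i}{Se_j}$. A short computation gives $\braket{u}{Tu} = \sum_{i,j} t_{ij}\,\braket{u}{e_i}\braket{u}{e_j}$ and $\braket{Su}{u} = \sum_{i,j} \overline{\sigma_{ij}}\,\braket{e_i}{u}\braket{e_j}{u}$, so the exponent in $I(S,T;v,w)$ is holomorphic in the $t_{ij}$ and antiholomorphic in the $\sigma_{ij}$. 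Writing $\tau_{ij} := \overline{\sigma_{ij}}$, the pair $(t,\tau)$ ranges over the connected open set $\sD(V) \times \overline{\sD(V)} \subseteq \bC^N \times \bC^N$ (with $N = m(m+1)/2$), on which $I$ is jointly holomorphic: convergence is as for \eqref{eq:Gaussian-pairing-B}, the first-degree terms $\braket{u}{v}$, $\braket{w}{u}$ being dominated by the negative-definite quadratic part, and differentiation under the integral sign is legitimate by the Gaussian decay. The right-hand side is holomorphic on the same set: the factor $\det^{-1/2}(1 - TS)$ was already seen to be holomorphic in~$T$ and antiholomorphic in~$S$ on $\sD(V) \times \sD(V)$ (using $\Re\braket{v}{(1-TS)v} > 0$), and the matrix elements in the exponent, such as $\braket{S(1-TS)^{-1}v}{v}$, are rational in the entries of $1 - TS$ and inherit the same character.

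The diagonal $\{S = T\}$ corresponds to the totally real locus $\{\tau = \bar t\}$, a uniqueness set for holomorphic functions on $\sD(V) \times \overline{\sD(V)}$. On it the left-hand side is $I(T,T;v,w) \equiv I(T;v,w)$, evaluated in \eqref{eq:big-integral-diagonal}, while the right-hand side collapses to the same expression once one notes that $(1-TS)^{-1}T = (1-T^2)^{-1}T = T(1-T^2)^{-1}$ when $S = T$. The identity theorem then yields \eqref{eq:big-integral-general} throughout $\sD(V) \times \sD(V)$. The equivalence of the two displayed forms of the last term is the purely algebraic identity $(1-TS)^{-1}T = T(1-ST)^{-1}$, immediate from $(1-TS)T = T - TST = T(1-ST)$. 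The main obstacle is the analytic bookkeeping for the left-hand side: one must verify that the Gaussian integral is genuinely a single-valued holomorphic function of $(t,\tau)$ on a connected open set meeting the diagonal, i.e.\ that the positivity ensuring convergence and the chosen branch of $\det^{-1/2}$ remain consistent there, after which the polarization step is automatic, the diagonal identity being already in hand.
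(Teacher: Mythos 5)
Your proposal is correct and follows essentially the same route as the paper, which establishes the diagonal case $I(T;v,w)$ by explicit computation in a $T$-adapted basis and then obtains the general formula by polarization, invoking holomorphy in $T$ and antiholomorphy in $S$. You merely spell out the analytic bookkeeping (the coordinates $t_{ij}$, $\sigma_{ij}$, the uniqueness of the totally real locus $\{\tau=\bar t\}$, and the identity $(1-TS)^{-1}T = T(1-ST)^{-1}$) that the paper leaves implicit.
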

\end{subequations}

The integral formula \eqref{eq:big-integral-general} is fairly well 
known, starting with Eq.~(1.18) in~\cite{Bargmann61} for
$V = \bC^m$, developed in \cite[Lemma~5]{Itzykson67} in terms of
matrices, exposited in Appendix~A of~\cite{Folland89} and again in
\cite[Thm.~1.10]{RobinsonR89}. Our version, 
Lemma~\ref{lm:big-integral}, is stated here mainly for its application
to the metaplectic representation in Section~\ref{sec:meta-spin}.

\marker
The treatment of fermionic Gaussians is analytically simpler, since
the fermionic Fock space is finite-dimensional. We summarize the
necessary exterior algebra, developed in detail in Sections 5.4
and~5.5 of~\cite{Polaris}. Starting with any $T \in \rSk(V)$, and any
oriented orthonormal basis $\{e_1,\dots,e_m\}$ of $V$, all
$b_{ij} := \braket{e_i}{Te_j}$ are entries of a skewsymmetric
$m \x m$ matrix~$B$, whose Pfaffian
$$
\Pf B := \frac{1}{2^m\,m!} \sum_{\sg\in S_n} 
(-1)^\sg \prod_{k=1}^m b_{\sg(2k-1),\sg(2k)}
$$
satisfies $(\Pf B)^2 = \det B$. (The sign of $\Pf B$ depends on the 
orientation.) 

For each ordered subset $L \subseteq \{1,\dots,m\}$, there is a
skewsymmetric submatrix $B_L$ with entries $\set{b_{ij} : i,j \in L}$;
denote its Pfaffian by $\Pf T_L \equiv \Pf B_L$. Then, by expanding
the exponential \eqref{eq:Gaussian-vector-F}, one finds that
\begin{align}
f_T &= \exp^\w(\half H_T)
= \sum_{0\leq 2r\leq m} \frac{1}{2^r\,r!}\, H_T^{\w r}
\nonumber \\
&= \sum_{0\leq 2r\leq m} \sum_{|K|=2r} \frac{1}{2^r r!}\, \eta_K\,
b_{k_1,k_2} \cdots b_{k_{2r-1,2r}} \,\eps_K
= \sum_{|K|\,\even} (\Pf T_K)\,\eps_K
\label{eq:Pfaffian-expansion} 
\end{align}
where $\eta_K$ is the sign of the permutation putting the indices
$k_1,\dots,k_{2r}$ in increasing order. (Under a change of orientation
of the basis $\{e_i\}$, the signs of both $\Pf T_K$ and~$\eps_K$
remain the same or change together, so that the right-hand side
of~\eqref{eq:Pfaffian-expansion} is not affected.) Notice that the 
first term in the sum on the right is $\eps_\emptyset = \Om$.

The expansion \eqref{eq:Pfaffian-expansion} shows that the Gaussians
do not span the whole Fock space $\sF(V)$, but only the \textit{even
subspace} $\sF_0(V) \equiv \La^\even(V)$, since the monomials $\eps_K$
appearing in this expansion generate the even-degree subalgebra of
$\La^\8(V)$.

\begin{lema} 
\label{lm:Gaussian-norm-F}
If $T \in \rSk(V)$, the norm of the Gaussian element $f_T$ is given by
$$
\|f_T\|^2 = \sum_{|K|\,\even} |\Pf T_K|^2 = \det^{1/2}(1 - T^2).
$$
\end{lema}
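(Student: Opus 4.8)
The plan is to prove the two equalities separately. The first, $\|f_T\|^2 = \sum_{|K|\,\even}|\Pf T_K|^2$, is immediate from Parseval's identity: the expansion~\eqref{eq:Pfaffian-expansion} writes $f_T$ in the orthonormal basis $\{\eps_K\}$ of the even subspace $\sF_0(V)$ with coefficients $\Pf T_K$, so the squared norm is the sum of the squared moduli of these coefficients. The whole content of the lemma therefore lies in the second equality, $\sum_{|K|\,\even}|\Pf T_K|^2 = \det^{1/2}(1 - T^2)$, and to establish it I would pass to an orthonormal basis adapted to~$T$, mirroring the bosonic computation~\eqref{eq:Gaussian-norm-B}.

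First I would record the relevant spectral properties of $T \in \rSk(V)$. Since $T$ is antilinear, $T^2$ is $\bC$-linear; using the skewsymmetry $\braket{u}{Tv} = -\braket{v}{Tu}$ (the minus sign in~\eqref{eq:symmetry-of-T}) one checks that $T^2$ is self-adjoint and that $\braket{u}{(-T^2)u} = \|Tu\|^2 \geq 0$, so $-T^2$ is positive semidefinite (in agreement with $1 - T^2 \geq 1$ from Lemma~\ref{lm:symmetry-of-T}). Let $\mu_1^2,\dots,\mu_p^2$ be the strictly positive eigenvalues of $-T^2$, with $\mu_k > 0$. Because $T$ commutes with $T^2$ and $\braket{e}{Te} = 0$ for every~$e$, each such eigenvalue yields an orthonormal pair: taking a unit eigenvector~$e_k$ and setting $f_k := \mu_k^{-1} T e_k$, one finds $\|f_k\| = 1$, $\braket{e_k}{f_k} = 0$, and
\begin{equation*}
T e_k = \mu_k f_k, \qquad T f_k = -\mu_k e_k .
\end{equation*}
Peeling off these planes and adjoining an orthonormal basis $g_1,\dots,g_{m-2p}$ of $\ker T$ produces an orthonormal basis $\{e_1,f_1,\dots,e_p,f_p,g_1,\dots\}$ of~$V$ in which $T$ is block-diagonal.

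In this $T$-adapted basis the skewsymmetric matrix $B = (\braket{e_i}{Te_j})$ of Eq.~\eqref{eq:Pfaffian-expansion} decomposes into $2\x2$ blocks $\twobytwo{0}{-\mu_k}{\mu_k}{0}$ together with a zero block on $\ker T$. For a block-diagonal skewsymmetric matrix, a submatrix $B_K$ has nonzero Pfaffian only when $K$ is a union of complete blocks $\{e_k,f_k\}$; for $K = \bigcup_{k\in S}\{e_k,f_k\}$ one gets $\Pf T_K = \prod_{k\in S}(-\mu_k)$, hence $|\Pf T_K|^2 = \prod_{k\in S}\mu_k^2$. Summing over all $S \subseteq \{1,\dots,p\}$ therefore gives
\begin{equation*}
\sum_{|K|\,\even}|\Pf T_K|^2
= \sum_{S}\prod_{k\in S}\mu_k^2 = \prod_{k=1}^p (1 + \mu_k^2).
\end{equation*}
On the other hand, $1 - T^2$ has eigenvalues $1 + \mu_k^2$ (each with multiplicity two, on $e_k$ and~$f_k$) and the eigenvalue~$1$ on $\ker T$, so $\det(1 - T^2) = \prod_{k=1}^p (1+\mu_k^2)^2$ and $\det^{1/2}(1-T^2) = \prod_{k=1}^p(1+\mu_k^2)$, taking the positive square root. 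The two expressions coincide, which is the assertion.

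I expect the main obstacle to be the construction and justification of the normal form: verifying that $-T^2$ is a positive self-adjoint $\bC$-linear operator, that the pairs $(e_k,f_k)$ are orthonormal and span $T$-invariant planes, and that the positive eigenspaces decompose orthogonally into such planes. The subsequent combinatorial step --- that only unions of full blocks contribute to the sum of squared Pfaffians --- is then routine. A basis-free alternative would be to invoke the classical identity $\sum_{|K|\,\even}|\Pf B_K|^2 = \det^{1/2}(1 + B B^\7)$ for a skewsymmetric matrix~$B$, together with the computation that the matrix of~$T^2$ equals $B\wb B = -B B^\7$, so that $\det(1-T^2) = \det(1 + B B^\7)$; but the adapted-basis argument is more self-contained and parallels the bosonic case directly.
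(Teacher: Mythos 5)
Your proof is correct, but it takes a genuinely different route from the paper's for the substantive part of the lemma. Both proofs dispose of the first equality by Parseval in the orthonormal basis $\{\eps_K\}$ of $\sF_0(V)$. For the second equality, the paper does not compute anything directly: it invokes the combinatorial identity $\det(1-TS) = \bigl(\sum_{|K|\,\even}\ovl{\Pf S_K}\,\Pf T_K\bigr)^2$, citing Section~5.5 of the reference \cite{Polaris} for its proof, observes that both sides are holomorphic in $T$ and antiholomorphic in $S$ so that only the diagonal case $S=T$ need be checked, and fixes the sign of the square root by positivity of $1-T^2$. You instead prove the diagonal case outright, by putting the antilinear skewsymmetric $T$ into its canonical form $Te_k = \mu_k f_k$, $Tf_k = -\mu_k e_k$ on orthogonal $T$-invariant planes plus a kernel, and then evaluating both $\sum_{|K|\,\even}|\Pf T_K|^2$ and $\det^{1/2}(1-T^2)$ as $\prod_k(1+\mu_k^2)$; this exactly mirrors the $T$-adapted-basis computation the paper uses for the \emph{bosonic} norm in~\eqref{eq:Gaussian-norm-B}, and your reduction (only unions of complete $2\x 2$ blocks contribute, with $\Pf$ of the block equal to $-\mu_k$) is sound. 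What each approach buys: yours is self-contained and supplies precisely the diagonal verification that the paper delegates to \cite{Polaris}; the paper's route, by establishing the full polarized identity~\eqref{eq:det-Pf-expansion}, immediately yields the off-diagonal pairing $\braket{f_S}{f_T} = \det^{1/2}(1-TS)$ of~\eqref{eq:Gaussian-pairing-F} needed later, which your normal-form argument alone does not give (one would still need the holomorphy/analytic-continuation step, which you do implicitly acknowledge in your basis-free alternative). The one step you rightly flag as needing care --- that the positive eigenspaces of $-T^2$ split orthogonally into $T$-invariant planes $\linspan_\bC\{e_k, Te_k\}$ --- is routine, using $\braket{e}{Te}=0$ from skewsymmetry and the $T$-invariance of orthogonal complements.
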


\begin{proof}
The first equality is easy, since the multivectors $\eps_K$, with
$|K|$~even, form an orthonormal basis for $\sF_0(V)$, 
by~\eqref{eq:F-multivector-pairing}. We refer to Section~5.5 
of~\cite{Polaris} for the combinatorial proof that
\begin{equation}
\det(1 - TS) 
= \biggl( \sum_{|K|\,\even} \ovl{\Pf S_K}\,\Pf T_K \biggr)^2
\word{when}  S,T \in \rSk(V).
\label{eq:det-Pf-expansion} 
\end{equation}
The complex conjugate of $\Pf S_K$ appears since each matrix entry
of~$TS$ is
$$
\braket{e_i}{TSe_j} = \braket{Se_j}{Te_i}
= \sum_{k=1}^m \braket{Se_j}{e_k}\,\braket{e_k}{Te_i}
= \sum_{k=1}^m \ovl{\braket{e_k}{Se_j}}\, \braket{e_k}{Te_i};
$$
so all summands in~\eqref{eq:det-Pf-expansion} depend holomorphically
on~$T$ and antiholomorphically on~$S$.

\goodbreak 

That, in turn, entails that one need only verify the diagonal case 
of~\eqref{eq:det-Pf-expansion}. Since $1 - T^2$ is symmetric and positive 
definite, by Lemma~\ref{lm:symmetry-of-T}, it follows that
$\det(1 - T^2) > 0$, and one may take the positive square root as the 
definition of $\det^{1/2}(1 - T^2)$.
\end{proof}

As in the bosonic case, we can polarize the formula for $\|f_T\|^2$ 
to obtain the scalar product of two Gaussian vectors. The result is:
\begin{equation}
\braket{f_S}{f_T} = \sum_{|K|\,\even} \ovl{\Pf S_K}\,\Pf T_K
= \det^{+1/2}(1 - TS)  \word{for}  S,T \in \rSk(V).
\label{eq:Gaussian-pairing-F} 
\end{equation}
The first equality follows directly from the Pfaffian expansion
\eqref{eq:Pfaffian-expansion}. The second comes from
\eqref{eq:det-Pf-expansion}, with the same proviso as before about the
square root: one takes the positive square root when $S = T$, extended
by analytic continuation for $S \neq T$.

\section{The metaplectic and spin representations} 
\label{sec:meta-spin}

A common feature of the representations of the symmetry groups
$\rSp(V)$ and $\rO(V)$, that we shall soon construct on the respective
Fock spaces $\sB(V)$ and $\sF(V)$, is that their common unitary
subgroup
$$
\rU_J(V) = \rSp(V) \cap \rO(V)
$$
is easily implemented on free fields. These are, by definition,
operators $\phi(v)$ [or~$\psi(v)$] on Fock space, indexed by $v \in
V$, satisfying the canonical commutation [or~anticommutation]
relations. It is well known that, even when the configuration space
$V$ and the Fock space $\sF(V)$ are infinite dimensional, the
fermionic free fields are represented by bounded operators. However,
for bosonic fields that is never so, and such $\phi(v)$ will be
unbounded operators on~$\sB(V)$. For a detailed discussion of the
principles involved, consult Chapter~3 of~\cite{Emch72}.

\subsection{Weyl systems and bosonic free fields} 
\label{ssc:Weyl-system}

The issue of unbounded operators was finessed by Weyl, who introduced 
an exponential version of the canonical commutation relations, using 
unitary operators. Let $\sU(\sB(V))$ denote the group of unitary 
operators on~$\sB(V)$.

\begin{defn} 
\label{df:Weyl-system}
A \textit{Weyl system} on the symplectic vector space
$(V,s)$ is a strongly continuous map $\bt \: V \to \sU(\sB(V))$
that satisfies the relation
\begin{equation}
\bt(v) \bt(w) = e^{-\sihalf\,s(v,w)}\, \bt(v + w),
\word{for all} v,w \in V.
\label{eq:Weyl-relations} 
\end{equation}
Thereby, $v \mapsto \bt(v)$ is a projective unitary representation of
the additive group of $V$ on~$\sB(V)$, with the $2$-cocycle $(v,w)
\mapsto e^{-\sihalf\,s(v,w)}$.
\end{defn}

The question of existence of Weyl systems is settled by the following 
\textit{example}:
\begin{subequations}
\label{eq:Weyl-system} 
\begin{equation}
\bt(v) F : u \longmapsto e^{\quarter\braket{2u - v}{v}}\,F(u - v),
\word{for}  F \in \sB(V).
\label{eq:Weyl-system-one} 
\end{equation}

It is straightforward to verify that
\begin{equation}
\bt(v) E_w = e^{-\quarter\braket{v}{v + 2w}}\,E_{v+w}
\label{eq:Weyl-system-two} 
\end{equation}
\end{subequations}
for all $v,w \in V$; and consequently,
$$
\braket{\bt(v)E_{w'}}{\bt(v)E_w} = e^{\half\braket{w'}{w}}
= \braket{E_{w'}}{E_w}.
$$
Since the principal vectors $E_v$ form a total set in~$\sB(V)$, each
$\bt(v)$ is isometric and surjective, i.e., $\bt(v) \in \sU(\sB(V))$
as required.

\marker
Any unitary transformation $o \in \rU_J(V)$ of the one-particle 
space~$V$ lifts to a unitary operator $\Ga_\bos(o)$ on the Fock space
$\sB(V)$ in the obvious way:
$$
\Ga_\bos(o)F(u) := F(o^{-1}u).
$$
Using~\eqref{eq:Weyl-system}, one sees that these unitary 
operators intertwine the Weyl system:
\begin{equation}
\Ga_\bos(o)\,\bt(v)\,\Ga_\bos(o)^{-1} = \bt(ov)
\label{eq:Weyl-system-entwined} 
\end{equation}
by applying both sides to any $F \in \sB(V)$.

For any (possibly unbounded) selfadjoint operator $A$ on $V$,
$t \mapsto \exp(itA)$ is a strongly continuous one-parameter subgroup
of $\rU_J(V)$, satisfying
\begin{align}
\Ga_\bos(\exp(itA)) E_v(u) &= E_v(\exp(-itA)u)
= \exp\bigl( \half\braket{\exp(-itA)u}{v} \bigr)
\nonumber \\
&= \exp\bigl( \half\braket{u}{\exp(itA)v} \bigr) = E_{\exp(itA)v}(u).
\label{eq:unitary-action} 
\end{align}
By Stone's theorem, one may define a selfadjoint operator
$d\Ga_\bos(A)$ on~$\sB(V)$ by setting
$\exp(it\,d\Ga_\bos(A)) := \Ga_\bos(\exp(itA))$ for all $t \in \bR$.
Then $\sD_0(A) := \linspan\set{E_v : v \in \Dom A}$ is a dense
subspace of~$\sB(V)$, preserved by $d\Ga_\bos(A)$ since the
calculation \eqref{eq:unitary-action} shows that
$$
d\Ga_\bos(A)\,E_v = E_{Av}  \word{for all} v \in \Dom A.
$$
Therefore $\sD_0(A)$ is a core for $d\Ga_\bos(A)$ -- see, for instance,
\cite[Prop.~B.3]{Taylor86}.

It is immediate that
$$
\braket{E_w}{d\Ga_\bos(A)\,E_v} = \half \braket{w}{Av},
\word{for all} v,w \in \Dom A.
$$
If $w = \sum_{j=1}^k c_j v_j \in \Dom A$, then by putting
$F := \sum_{j=1}^k c_j E_{v_j} \in \sD_0(A)$, one obtains
$$
\braket{F}{d\Ga_\bos(A)F} 
= \frac{1}{2} \sum_{i,j=1}^k \bar c_i c_j \braket{v_i}{Av_j}
= \frac{1}{2} \braket{w}{Aw}.
$$
This shows that if $A$ is a \textit{positive} selfadjoint operator
on~$V$, then $d\Ga_\bos(A)$ is likewise a positive selfadjoint
operator on~$\sB(V)$: the unitary representation $\Ga_\bos$
of~$U_J(V)$ has \textit{positive energy}.

\marker
The \textit{bosonic free field} associated to the Weyl system
\eqref{eq:Weyl-system} is the family of unbounded selfadjoint
operators $\set{\phi(v) : v \in V}$ on~$\sB(V)$ defined implicitly by
$$
\exp(it\phi(v)) := \bt(\sqrt{2}\,tv)  \word{for} t \in \bR,
$$
whose existence is guaranteed by Stone's theorem. In other words,
\begin{equation}
\phi(v)F(u) := -\sqrt{2}\,i \,\ddto{t} \bt(tv) F(u)
\word{when} F \in \Dom \phi(v).
\label{eq:bosonic-field} 
\end{equation}
The dense subspace $\sD_0 := \linspan\set{E_w : w \in V}$ of~$\sB(V)$
is a common core for all~$\phi(v)$; that follows from the next lemma.

\begin{lema} 
\label{lm:core-values}
For all $u,v,w \in V$, there holds:
\begin{equation}
\phi(v) E_w(u) = \tfrac{i}{\sqrt{2}}\,
\bigl( \braket{v}{w} - \braket{u}{v} \bigr) E_w(u).
\label{eq:core-values} 
\end{equation}
\end{lema}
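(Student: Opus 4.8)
The plan is to compute directly, exploiting the fact that $E_w$ is a principal vector, on which the Weyl system acts in closed form. First I would specialize Eq.~\eqref{eq:Weyl-system-two}, replacing $v$ by $tv$, to obtain
$$
\bt(tv) E_w = e^{-\quarter\braket{tv}{tv + 2w}}\, E_{tv+w}.
$$
Evaluating at the point $u$ and substituting $E_{tv+w}(u) = e^{\half\braket{u}{tv+w}}$ turns the left-hand side into a scalar-valued smooth function of the real parameter~$t$, namely $\bt(tv)E_w(u) = e^{g(t)}$ with
$$
g(t) = -\quarter\braket{tv}{tv + 2w} + \half\braket{u}{tv + w}.
$$

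Next I would expand $g(t)$ explicitly. Since $t$ is real, antilinearity in the first slot of the scalar product gives $\braket{tv}{\,\cdot\,} = t\braket{v}{\,\cdot\,}$, so that $\braket{tv}{tv+2w} = t^2\braket{v}{v} + 2t\braket{v}{w}$ while $\braket{u}{tv+w} = t\braket{u}{v} + \braket{u}{w}$. Hence $g(0) = \half\braket{u}{w}$, confirming $\bt(0)E_w = E_w$, and
$$
g'(0) = -\half\braket{v}{w} + \half\braket{u}{v}
= \tfrac{1}{2}\bigl(\braket{u}{v} - \braket{v}{w}\bigr).
$$

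Finally, by the chain rule $\ddto{t}\bt(tv)E_w(u) = g'(0)\,e^{g(0)} = g'(0)\,E_w(u)$, and the definition~\eqref{eq:bosonic-field} of the free field multiplies this by $-\sqrt{2}\,i$, yielding
$$
\phi(v)E_w(u)
= -\sqrt{2}\,i\cdot\tfrac{1}{2}\bigl(\braket{u}{v} - \braket{v}{w}\bigr)E_w(u)
= \tfrac{i}{\sqrt{2}}\bigl(\braket{v}{w} - \braket{u}{v}\bigr)E_w(u),
$$
as claimed. There is no genuine obstacle here beyond careful bookkeeping: the two points to watch are the sesquilinearity convention, so that the real scaling by~$t$ passes through the antilinear first argument without conjugation, and the overall sign, which flips when one rewrites $-(\braket{u}{v} - \braket{v}{w})$ as $\braket{v}{w} - \braket{u}{v}$. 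It is worth noting that the right-hand side is an antiholomorphic function of~$u$ of finite norm, so that $\phi(v)E_w$ is an explicitly given element of $\sB(V)$; this closed form is precisely what supports the preceding assertion that $\sD_0 = \linspan\set{E_w : w \in V}$ is a common core for the family $\set{\phi(v) : v \in V}$.
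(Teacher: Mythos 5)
Your computation is correct and is essentially identical to the paper's own proof: both apply the explicit formula \eqref{eq:Weyl-system-two} for $\bt(tv)E_w$, evaluate at $u$, and differentiate the resulting exponential at $t = 0$ using the definition \eqref{eq:bosonic-field}. The bookkeeping of the sesquilinearity convention and the final sign is handled correctly.
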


\begin{proof}
This is a direct computation, using \eqref{eq:Weyl-system-two}:
\begin{align*}
\phi(v) E_w(u) &= -\sqrt{2}\,i \,\ddto{t} \bt(tv) E_w(u)
= -\sqrt{2}\,i \,\ddto{t} \exp\bigl\{ -\quarter\braket{tv}{tv + 2w}
+ \half \braket{u}{tv + w} \bigr\}
\\
&= \bigl( \tfrac{i}{\sqrt{2}}\, \braket{v}{w} 
- \tfrac{i}{\sqrt{2}}\, \braket{u}{v} \bigr)\, e^{\half\braket{u}{w}}
= \tfrac{i}{\sqrt{2}} \bigl( \braket{v}{w}
- \braket{u}{v} \bigr) E_w(u).
\tag*{\qed}
\end{align*}
\hideqed
\end{proof}

Note, in particular, that $\phi(v)F \in \sD_0$ whenever $F \in \sD_0$.
The commutation relations for the fields $\phi(v)$ follow directly.

\begin{prop} 
\label{pr:boson-fields}
For any $v,w \in V$, the following relation holds:
\begin{equation}
[\phi(v), \phi(w)] = 2i\,s(v,w),
\label{eq:boson-fields} 
\end{equation}
where the right-hand side denotes a scalar operator on~$\sB(V)$.
\end{prop}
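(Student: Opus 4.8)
The plan is to reduce the statement to the canonical commutator of a creation and an annihilation operator, working on the common core $\sD_0 = \linspan\set{E_w : w \in V}$. First I would rewrite each field as an explicit first-order operator. Re-examining the defining formula \eqref{eq:bosonic-field} combined with \eqref{eq:Weyl-system-one} and carrying out the $t$-derivative, the quadratic exponent contributes a multiplication term and the shift $F(u-tv)$ contributes a directional derivative, so that
$$
\phi(v)F(u) = \sqrt2\,i\,a_v F(u) - \tfrac{i}{\sqrt2}\,\braket{u}{v}\,F(u),
\qquad a_v F(u) := \ddto{t} F(u+tv).
$$
Testing this on $F = E_w$, where $a_v E_w = \half\braket{v}{w}E_w$, reproduces exactly Lemma~\ref{lm:core-values}, which pins down the constants $\sqrt2\,i$ and $-i/\sqrt2$.

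The second step is the purely algebraic commutator computation. Writing $c_v$ for multiplication by $\braket{u}{v}$, the operators $a_v, a_w$ commute with each other and $c_v, c_w$ commute with each other, so expanding $[\phi(v),\phi(w)]$ bilinearly leaves only the two mixed terms, each with coefficient $+1$ once the $\sqrt2\,i$ and $-i/\sqrt2$ factors multiply out. The single nonzero elementary commutator is $[a_v, c_w] = \braket{v}{w}$, which follows from the Leibniz rule $a_v(\braket{u}{w}F) = \braket{v}{w}F + \braket{u}{w}\,a_vF$. Hence
$$
[\phi(v),\phi(w)] = [a_v,c_w] - [a_w,c_v] = \braket{v}{w} - \braket{w}{v} = 2i\,s(v,w),
$$
where the last equality uses \eqref{eq:scalar-product}: the $d$-parts cancel by symmetry of~$d$, while the $is$-parts add by skewsymmetry of~$s$. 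This establishes the identity as an operator equation on $\sD_0$, and the core property carries it to the appropriate domain in $\sB(V)$.

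The main obstacle is not the algebra but the functional analysis, since the $\phi(v)$ are unbounded. The delicate point is that $\phi(w)E_z$ is a principal vector times a linear polynomial rather than a principal vector itself, so I would work on the manifestly $\phi$-invariant domain of functions $p\,E_z$ with $p$ an antiholomorphic polynomial, which contains $\sD_0$, is a core, and on which the first-order expression above is literally valid; alternatively one may avoid second applications entirely by computing matrix elements $\braket{E_y}{[\phi(v),\phi(w)]E_z}$ and using self-adjointness of each $\phi(v)$ to move it onto the left principal vector, where Lemma~\ref{lm:core-values} applies directly.

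A cleaner, wholly group-theoretic alternative sidesteps the domain bookkeeping: from \eqref{eq:Weyl-relations} together with $\exp(it\phi(v)) = \bt(\sqrt2\,tv)$ one obtains
$$
\exp(it_1\phi(v))\,\exp(it_2\phi(w))
= e^{-2i\,t_1 t_2\,s(v,w)}\,\exp(it_2\phi(w))\,\exp(it_1\phi(v)).
$$
Applying $\del_{t_1}\del_{t_2}$ at $t_1=t_2=0$ to a core vector, the left side yields $-\phi(v)\phi(w)$, while the cross term of the scalar phase yields $-2i\,s(v,w)$ and the operator factor yields $-\phi(w)\phi(v)$; equating the two gives $[\phi(v),\phi(w)] = 2i\,s(v,w)$. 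This route trades the invariant-domain argument for the standard justification of termwise strong differentiation of one-parameter unitary groups on~$\sD_0$.
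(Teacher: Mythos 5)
Your proposal is correct, and both of its routes diverge from the paper's. The paper proves the relation by brute force: it applies $\phi(w)$ to $E_z$ via Lemma~\ref{lm:core-values}, hits the result with $\bt(tv)$, differentiates at $t=0$ to get $\phi(v)\phi(w)E_z(u) = \half\{(\braket{w}{z}-\braket{u}{w})(\braket{u}{v}-\braket{v}{z}) + 2\braket{v}{w}\}E_z(u)$, and then antisymmetrizes, relying on a cascade of cancellations. Your first argument repackages the same derivative of \eqref{eq:Weyl-system-one} once and for all as $\phi(v) = \sqrt2\,i\,a_v - \tfrac{i}{\sqrt2}\,c_v$ and reduces everything to the single elementary commutator $[a_v,c_w]=\braket{v}{w}$; the constants check out ($\sqrt2\,i\cdot(-i/\sqrt2)=1$), and the cancellation of the $d$-parts becomes structural rather than accidental. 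Your second argument -- differentiating the exchange relation obtained from \eqref{eq:Weyl-relations} at $t_1=t_2=0$ -- is the textbook Weyl-to-CCR derivation and is genuinely different from anything in the paper; it buys you freedom from applying an unbounded $\phi(v)$ to a vector outside $\sD_0$, at the cost of justifying termwise strong differentiation of the unitary groups. You are also right to flag, and the paper silently elides, that $\phi(w)E_z$ is a linear polynomial times $E_z$ rather than an element of $\sD_0$, so the iterated application needs the larger invariant domain of polynomial multiples of principal vectors (or the matrix-element workaround you describe); this is a point on which your write-up is actually more careful than the paper's.
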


\begin{proof}
It is enough to evaluate the left-hand side on any $E_z$. Indeed,
\begin{align*}
\phi(v)\,\phi(w)\,E_z(u)
&= -\sqrt{2}\,i\,\ddto{t} \bt(tv)\,\phi(w)\,E_z(u)
= -\sqrt{2}\,i\,\ddto{t} e^{\quarter\braket{2u - tv}{tv}}
\phi(w)\, E_z(u - tv)
\\
&= \ddto{t} e^{\quarter\braket{2u - tv}{tv}} 
\bigl( \braket{w}{z} - \braket{u - tv}{w} \bigr)\, E_z(u - tv)
\\
&= \bigl( \braket{w}{z} - \braket{u}{w} \bigr)
\ddto{t} e^{\quarter\braket{2u - tv}{tv} + \half \braket{u - tv}{z}}
+ \braket{v}{w}\, E_z(u)
\\
&= \half \bigl\{
\bigl( \braket{w}{z} - \braket{u}{w} \bigr)
\bigl( \braket{u}{v} - \braket{v}{z} \bigr)
+ 2 \braket{v}{w} \bigr\} E_z(u).
\end{align*}
On exchanging $v \otto w$ and subtracting, several terms cancel, 
leaving only
$$
[\phi(v), \phi(w)]\,E_z 
= \bigl( \braket{v}{w} - \braket{w}{v} \bigr) E_z = 2i\,s(v,w)\,E_z
$$
which establishes \eqref{eq:boson-fields}.
\end{proof}

We make contact with the standard formalism by introducing the 
\textit{creation and annihilation operators} on the Fock space 
$\sB(V)$ as follows. For each $v \in V$, define
\begin{equation}
a_J^\7(v) := \half (\phi(v) - i\phi(Jv)) \word{and}
a_J(v) := \half (\phi(v) + i\phi(Jv)).
\label{eq:boson-pieces} 
\end{equation}
It is immediate that:
$$
a_J^\7(Jv) = i\,a_J^\7(v) \word{and} a_J(Jv) = -i\,a_J(v).
$$ 
Therefore, the $a_J^\7(v)$ operators are $\bC$-linear in~$v$ and the
$a_J(v)$ are antilinear in~$v$. The boson field, reconstructed as
\begin{equation}
\phi(v) = a_J^\7(v) + a_J(v),
\label{eq:boson-splitting} 
\end{equation}
is merely $\bR$-linear in~$V$.

On replacing $v \mapsto Jv$ in \eqref{eq:core-values}, it becomes
$$
\phi(Jv) E_w(u) = \tfrac{1}{\sqrt{2}}\,
\bigl( \braket{v}{w} + \braket{u}{v} \bigr) E_w(u).
$$
Combining both relations, we obtain
\begin{equation}
a_J^\7(v)E_w(u) = -\tfrac{i}{\sqrt{2}}\, \braket{u}{v}\, E_w(u)
\word{and}
a_J(v)E_w(u) = \tfrac{i}{\sqrt{2}}\, \braket{v}{w}\, E_w(u).
\label{eq:push-and-pull} 
\end{equation}
Recalling the identification of $v \in V$ with the function
$u \mapsto \tfrac{1}{\sqrt{2}} \braket{u}{v}$, one sees that
$a_J^\7(v)$ acts as a multiplication operator (by~$-iv$) and $a_J(v)$
is a directional derivative. All annihilation operators kill the
vacuum $\Om = E_0$; and moreover,
$$
a_J^\7(v_1) a_J^\7(v_2) \dots a_J^\7(v_n)\,\Om
= (-i)^n\, v_1 \v v_2 \vyv v_n.
$$
Since the principal vectors $E_w$ form a total subset of~$\sB(V)$, 
the second relation in~\eqref{eq:push-and-pull} shows that the 
\textit{only} elements killed by all $a_J(v)$ are constant multiples 
of~$E_0$. Therefore, this \textit{vacuum sector} is the 
one-dimensional subspace~$\bC\,\Om$.

\subsection{Clifford algebras and fermionic free fields} 
\label{ssc:Clifford-mania}

The development of fermionic free fields over $V$ is somewhat simpler,
since they are implemented by \textit{bounded} operators $\psi(v)$ on
the Hilbert space $\sF(V)$ directly. (In the present context where
$\sF(V)$ is finite-dimensional, this is a trivial statement; but it
remains true more generally, from the work of Shale and
Stinespring~\cite{ShaleS65}.) Our approach here, already advertised
in~\cite{Rhea} and in \cite[Chaps.~5,\,6]{Polaris} differs somewhat
from the well-known formulation of~\cite{Araki87}, in which the role of
the complex structure $J$ on~$V$ is taken by a choice of a ``basis
projection''. However, both formulations are equivalent: a comparison
of the two approaches is found in~\cite{CalderonGRL18}.

The commutation relation \eqref{eq:boson-fields} must be replaced by
the anticommutation relation:
\begin{equation}
[\psi(u), \psi(v)]_+ = 2\,d(u,v)  \word{for all} u,v \in V,
\label{eq:fermion-fields} 
\end{equation}
where $[A,B]_+ := AB + BA$ denotes an anticommutator.

These are the defining relations of the Clifford algebra $\Cl(V,d)$
over the orthogonal vector space $(V,d)$. This is the universal
$\bR$-algebra generated by elements $\set{c(v) : v \in V}$ where
$v \mapsto c(v)$ is $\bR$-linear and $[c(u), c(v)]_+ = 2\,d(u,v)$
holds. Its complexification%
\footnote{Up to isomorphism, $\bCl(V)$ is independent of the signature
of the symmetric bilinear form~$d$.}
is $\bCl(V) := \Cl(V,d) \ox_\bR \bC$ generated by the same $c(v)$,
acting by $\bC$-linear operators on the exterior algebra
$\La^\8 V_\bC$. By writing $c(u + iv) := c(u) + i\,c(v)$, the
anticommutation relations extend bilinearly to
$\set{c(w) : w \in V_\bC}$.

Since $\bCl(V) \isom M_{2^m}(\bC)$ is a simple matrix algebra for
$V_\bC \isom \bC^{2m}$, its irreducible representations are all
equivalent; indeed, they may be labelled by the orthogonal complex
structures, see \cite[Sect.~5.3]{Polaris}. One may take
$\psi(v) := \pi_J(c(v))$ for $v \in V$, where $\pi_J$ is the
irreducible representation on $\sF(V) = \sF(V_J)$. To express it
concretely, we write
\begin{equation}
\psi(v) = a_J^\7(v) + a_J(v), 
\label{eq:fermion-splitting} 
\end{equation}
with
$$
a_J^\7(v) := \pi_J(c(P_+ v)),  \qquad  a_J(v) := \pi_J(c(P_- v)),
$$
where $P_\pm = \half(1 \mp iJ)$ are the polarization projectors
\eqref{eq:projectors}. These \textit{creation and annihilation
operators} on $\sF(V)$ are explicitly given by:
\begin{align}
a_J^\7(v) \bigl( v_1 \wyw v_k \bigr)
&:= v \w v_1 \w v_2 \wyw v_k, 
\nonumber \\
a_J(v) \bigl( v_1 \wyw v_k \bigr)
&:= \sum_{j=1}^k (-1)^{j-1} \braket{v}{v_j}\, 
v_1 \wyw \miss{v_j} \wyw v_k.      
\label{eq:birth-and-death} 
\end{align}
For $k = 0$, this means $a_J^\7(v)\Om := v$ and $a_J(v)\Om := 0$.
Moreover, the second relation in~\eqref{eq:birth-and-death} shows that
the \textit{vacuum sector}, i.e., those elements killed by
all~$a_J(v)$, is the one-dimensional subspace~$\bC\,\Om$ in the
fermionic case, too.

From this definition it is clear that 
$a_J^\7(Jv) = i\,a_J^\7(v)$ and $a_J(Jv) = -i\,a_J(v)$, as in the 
bosonic case, so that $v \mapsto a_J^\7(v)$ is $\bC$-linear,
$v \mapsto a_J(v)$ is antilinear; and also $v \mapsto \psi(v)$ is
$\bR$-linear. The \textit{canonical anticommutation relations}:
\begin{equation}
[a_J^\7(u), a_J^\7(v)]_+ = [a_J(u), a_J(v)]_+ = 0 \word{and}
[a_J(u), a_J^\7(v)]_+ = \braket{u}{v}
\label{eq:CAR} 
\end{equation}
follow at once by direct calculation. This also establishes
\eqref{eq:fermion-fields}, of course.

Once again, a unitary transformation $o \in \rU_J(V)$ lifts to a
unitary operator $\Ga_\fer(o)$ on the Fock space $\sF(V)$. The
prescription is:
$$
\Ga_\fer(o) \bigl( v_1 \wyw v_k \bigr) := ov_1 \wyw ov_k  
\word{for}  v_1,\dots,v_k \in V,
$$
together with $\Ga_\fer(o)\,\Om := \Om$. These unitary operators
intertwine the fermion fields
\begin{equation}
\Ga_\fer(o)\,\psi(v)\,\Ga_\fer(o)^{-1} = \psi(ov).
\label{eq:fermions-entwined} 
\end{equation}
It suffices to check that the $\Ga_\fer(o)$ intertwine creation and
annihilation operators separately.

\subsection{Representations of the full symmetry groups} 
\label{ssc:full-group-repns}

The commutation and anticommutation relations for the fields
\eqref{eq:boson-fields} and \eqref{eq:fermion-fields} remain invariant
under the respective actions of the unitary group $U_J(V)$, since
\begin{align}
\Ga_\bos(o)\, [\phi(u), \phi(v)] \,\Ga_\bos(o)^{-1}
&= [\phi(ou), \phi(ov)] = 2i\,s(ou, ov) = 2i\,s(u,v);
\nonumber \\
\Ga_\fer(o)\, [\psi(u), \psi(v)]_+ \,\Ga_\fer(o)^{-1}
&= [\psi(ou), \psi(ou)]_+ = 2\,d(ou, ov) = 2\,d(u,v).
\label{eq:unitary-entwine} 
\end{align}
But the bilinear forms $s$ and~$d$ each have a larger symmetry group:
$\rSp(V)$ and~$\rO(V)$ respectively. There ought to be other unitary
representations that extend the intertwining relations
\eqref{eq:unitary-entwine} to these larger groups. However, since
phase factors obviously cancel on the left-hand sides
of~\eqref{eq:unitary-entwine}, it is enough to find
\textit{projective} representations of these larger groups on the Fock
spaces.

There are, of course, well-known projective representations of these
symmetry groups, coming from true representations on certain
\textit{double covering groups}: the metaplectic and pin
representations of $\rSp(V)$ and $\rO(V)$, respectively. For standard
treatments of the metaplectic representation, we refer to
\cite{Folland89, Wallach18}; for the spin and pin representations,
see~\cite{GoodmanW09}. For the orthogonal group, we shall consider
first only the neutral component $\rSO(V)$ and its spin
representation: the full orthogonal group is treated in
Appendix~\ref{app:pin-repn}. Their covering groups $\rMp(V)$ and
$\rSpin(V)$ respectively, are central extensions of the basic symmetry
groups by the two-element group $C_2 = \{\pm 1\}$.

However, for infinite-dimensional $V$, as developed originally by 
Shale and Stinespring \cite{Shale62, ShaleS65}, these double covering 
groups are inadequate. There it is necessary to allow 
general phase factors; that is to say, one needs central extensions 
by the circle group $\rU(1)$:
$$
\begin{tikzcd}[row sep=small]
1 \rar & \rU(1) \rar & \rMpc(V) \rar & \rSp(V) \rar & 1,
\\
1 \rar & \rU(1) \rar & \rSpinc(V) \rar & \rSO(V) \rar & 1.
\end{tikzcd}
$$
Here we develop these extensions and their representations while 
keeping $V$ of finite (even) dimension.

\marker
To amplify the relations \eqref{eq:unitary-entwine} beyond $\rU_J(V)$ 
to the full symmetry groups, we need families of unitary operators on 
each Fock space, say $\nu(g) \in \sU(\sB(V))$ for $g \in \rSp(V)$; and
$\mu(g) \in \sU(\sF(V))$ for $g \in \rO(V)$, with the intertwining 
properties:
\begin{subequations}
\label{eq:full-entwine} 
\begin{alignat}{2}
\nu(g)\,\phi(v) &= \phi(gv)\,\nu(g)
& \word{for all} & g \in \rSp(V), \ v \in V,
\label{eq:full-entwine-bos} 
\\[\jot]
\mu(g)\,\psi(v) &= \psi(gv)\,\mu(g)
& \word{for all} & g \in \rO(V), \ v \in V.
\label{eq:full-entwine-fer} 
\end{alignat}
\end{subequations}
It costs nothing to require also that $\nu(o) = \Ga_\bos(o)$ and
$\mu(o) = \Ga_\fer(o)$ for all $o \in \rU_J(V)$.

The new feature is that those $g$ outside of $\rU_J(V)$ do not commute
with the complex structure~$J$ on~$V$: they modify it to $g J g^{-1}$,
while our Fock spaces are built over the one-particle space $V = V_J$.
Hence, the vacuum sector (the one-dimensional subspace $\bC\,\Om$ of
Fock space), which depends implicitly on~$J$, is not preserved under
the extended group actions.

To see what happens to the vacuum sector, consider the effect of 
$J \mapsto g J g^{-1}$, for a fixed~$g$, on the creation and
annihilation operators. They undergo the following (well known)
\textit{Bogoliubov transformations}.

\begin{lema} 
\label{lm:Bogoliubov}
The creation and annihilation operators defined by either
\eqref{eq:boson-splitting} or \eqref{eq:fermion-splitting} transform
under the action of the full symmetry groups as follows:
\begin{equation}
a_{gJg^{-1}}^\7(gv) = a_J^\7(p_g v) + a_J(q_g v)  \word{and}
a_{gJg^{-1}}(gv) = a_J^\7(q_g v) + a_J(p_g v),
\label{eq:Bogoliubov} 
\end{equation}
for all $v \in V$ and each $g \in \rSp(V)$ or $g \in \rO(V)$,
respectively.
\end{lema}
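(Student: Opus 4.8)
The plan is to notice that in both theories the free field is a single fixed $\bR$-linear map into operators on the Fock space, and that the creation and annihilation operators attached to \emph{any} compatible complex structure $K$ are recovered from this one field by one and the same combination formula. Indeed, rewriting \eqref{eq:boson-splitting} and \eqref{eq:fermion-splitting} yields, with $\phi$ in Case~B and $\psi$ in Case~F,
\begin{equation*}
a_K^\7(v) = \half\bigl( \phi(v) - i\,\phi(Kv) \bigr), \qquad
a_K(v) = \half\bigl( \phi(v) + i\,\phi(Kv) \bigr);
\end{equation*}
I would apply this to $K = J$ and, identically, to the rotated structure $K = gJg^{-1}$, all operators acting on the one Fock space built over~$V_J$. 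In the fermionic case this is legitimate because $\psi(v) = \pi_J(c(v))$ with $\pi_J$ an algebra homomorphism and $c$ being $\bR$-linear, so that $\half(\psi(v) - i\,\psi(Kv)) = \pi_J\bigl( c(\half(1 - iK)v) \bigr)$ is exactly the $K$-creation operator $\pi_J(c(P_+^K v))$.

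The single algebraic ingredient is the identity $(gJg^{-1})\,g = gJ$. Substituting $K = gJg^{-1}$ and evaluating at the argument $gv$ thus replaces the awkward $\phi\bigl((gJg^{-1})\,gv\bigr)$ by $\phi(gJv)$, and likewise with $\psi$ in Case~F. I would then split $g = p_g + q_g$ and use the $\bR$-linearity of the field together with the relations $p_g J = J p_g$ and $q_g J = -J q_g$ built into the linear/antilinear decomposition; this rewrites $\phi(gJv) = \phi(J p_g v) - \phi(J q_g v)$, while $\phi(gv) = \phi(p_g v) + \phi(q_g v)$ is immediate.

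Feeding these two expansions into $a_{gJg^{-1}}^\7(gv) = \half(\phi(gv) - i\,\phi(gJv))$ and regrouping the four resulting terms, the summands carrying $p_g v$ collapse to $\half(\phi(p_g v) - i\,\phi(J p_g v)) = a_J^\7(p_g v)$ and those carrying $q_g v$ collapse to $\half(\phi(q_g v) + i\,\phi(J q_g v)) = a_J(q_g v)$, giving the first identity. The annihilation formula drops out of the identical regrouping applied to $a_{gJg^{-1}}(gv) = \half(\phi(gv) + i\,\phi(gJv))$, where the sign pattern now pairs $p_g v$ with $a_J$ and $q_g v$ with $a_J^\7$. The computation is the same string of symbols in Cases~B and~F, which is precisely the parallelism being advertised, so I expect no genuine obstacle in it. The only point that needs care is the preliminary one: confirming that the combination formula above really \emph{is} the definition of the rotated operators $a_{gJg^{-1}}^\7$ and $a_{gJg^{-1}}$ on the unchanged Fock space, and in particular that it reconciles the bosonic prescription coming from the Weyl system with the fermionic one coming from $\pi_J$.
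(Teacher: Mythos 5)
Your proposal is correct and follows essentially the same route as the paper: both establish the combination formula $a_K^\7(v) = \half(\phi(v) - i\phi(Kv))$, $a_K(v) = \half(\phi(v) + i\phi(Kv))$ for the rotated structure $K = gJg^{-1}$ (in the fermionic case via the representation $\pi_J$ of the Clifford algebra), then use $(gJg^{-1})g = gJ$, the split $g = p_g + q_g$, and the (anti)commutation of $p_g$, $q_g$ with $J$ to regroup the four terms. No gap.
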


\begin{proof}
The fermionic operators obey the same defining
relations \eqref{eq:boson-pieces} as their bosonic counterparts:
$$
a_J^\7(v) = \half (\psi(v) - i\psi(Jv)) \word{and}
a_J(v) = \half (\psi(v) + i\psi(Jv)).
$$
This follows from linearity of $v \mapsto a_J^\7(v)$ and antilinearity
of $v \mapsto a_J(v)$, acting on~$\sF(V)$.

In the bosonic case, using $g = p_g + q_g$, it is clear that
\begin{align*}
a_{gJg^{-1}}^\7(gv) 
&= \half [\phi((p_g + q_g)v) - i\phi((p_g + q_g) Jv)]
\\
&= \half [\phi(p_g v) - i\phi(Jp_g v)]
+ \half [\phi(q_g v) + i\phi(Jq_g v)]
\\
&= a_J^\7(p_g v) + a_J(q_g v),
\end{align*}
and the second relation in \eqref{eq:Bogoliubov} follows by reversing 
the $\pm$~signs. The fermionic case comes from an identical 
calculation, with $\phi$ replaced by~$\psi$.
\end{proof}

On taking linear and antilinear parts of the field operators 
in~\eqref{eq:full-entwine}, one can re-express the intertwining 
properties of the desired unitary representations thus:
\begin{subequations}
\label{eqparts-entwine} 
\begin{alignat}{2}
\nu(g)\, a_J^\7(v) &= a_{gJg^{-1}}^\7(gv)\, \nu(g),  \qquad
& \nu(g)\, a_J(v) &= a_{gJg^{-1}}(gv)\, \nu(g);
\label{eq:parts-entwine-bos} 
\\[\jot]
\mu(g)\, a_J^\7(v) &= a_{gJg^{-1}}^\7(gv)\, \mu(g),  \qquad
& \mu(g)\, a_J(v) &= a_{gJg^{-1}}(gv)\, \mu(g);
\label{eq:parts-entwine-fer} 
\end{alignat}
\end{subequations}
for all $v \in V$ and $g \in \rSp(V)$ or $g \in \rO(V)$, as the case 
may~be.

This shows that the transformed \textit{out-vacuum sectors} are the
respective one-dimensional subspaces $\bC\,\nu(g)\,\Om \leq \sB(V)$
and $\bC\,\mu(g)\,\Om \leq \sF(V)$. For each fixed symmetry~$g$, the
out-vacuum unit vectors $\nu(g)\,\Om$ and $\mu(g)\,\Om$ are thereby
determined, up to phase factors of modulus~$1$, by the conditions:
\begin{equation}
a_{gJg^{-1}}(gv)\, \nu(g)\,\Om = 0, \word{respectively}
a_{gJg^{-1}}(gv)\, \mu(g)\,\Om = 0, \quad\text{for all } v \in V.
\label{eq:out-vacua} 
\end{equation}

\marker
Identification of the out-vacuum sectors requires separate arguments 
in the bosonic and fermionic cases, but the results end on common 
ground.

\begin{lema} 
\label{lm:out-vacuum-B}
If $g \in \rSp(V)$, the Gaussian vector $f_{T_g}$ lies in the 
out-vacuum sector of~$\sB(V)$:
\begin{equation}
a_{gJg^{-1}}(gv)\,f_{T_g} = 0  \word{for all} v \in V.
\label{eq:out-vacuum-B} 
\end{equation}
\end{lema}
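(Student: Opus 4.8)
The plan is to reduce the claim to a single algebraic identity describing how the creation and annihilation operators built from the fixed~$J$ act on the Gaussian $f_{T_g}$, and then to verify that identity by a short computation in the complex-wave picture.

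First I would apply the Bogoliubov transformation of Lemma~\ref{lm:Bogoliubov}, which expresses the transported annihilation operator as
$$
a_{gJg^{-1}}(gv) = a_J^\7(q_g v) + a_J(p_g v).
$$
Since $g \in \rSp(V)$, its linear part $p_g$ is invertible (Proposition~\ref{pr:p-q-relations}), so as $v$ runs over $V$ the vector $w := p_g v$ runs over all of~$V$; and $q_g = T_g p_g$ gives $q_g v = T_g w$. Thus \eqref{eq:out-vacuum-B} is equivalent to the single identity
$$
\bigl( a_J(w) + a_J^\7(T_g w) \bigr) f_{T_g} = 0 \word{for all} w \in V,
$$
with $T := T_g \in \sD(V)$.

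To prove this I would work in the Segal--Bargmann realization, using that $a_J^\7(v)$ is multiplication by the antiholomorphic function $u \mapsto -\tfrac{i}{\sqrt2}\braket{u}{v}$, while $a_J(v)$ is the antilinear first-order derivation dual to it (the ``directional derivative'' noted after~\eqref{eq:push-and-pull}). Applying the derivation $a_J(w)$ to the exponent of $f_T(u) = \exp(\quarter\braket{u}{Tu})$ and using the symmetry $T^t = T$ built into $\sD(V)$, I expect to obtain
$$
a_J(w)\,f_T = \tfrac{i}{\sqrt2}\,\braket{u}{Tw}\,f_T,
$$
whereas $a_J^\7(Tw)$ multiplies $f_T$ by $-\tfrac{i}{\sqrt2}\braket{u}{Tw}$. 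The two contributions then cancel, giving the identity. Equivalently, one may test the vector $(a_J(w) + a_J^\7(T_gw))f_{T_g}$ against all principal vectors $E_z$ and invoke the reproducing-kernel property~\eqref{eq:reproducing-kernel}; this avoids coordinates at the cost of a Gaussian integral.

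The one point demanding care -- and the only place the hypothesis genuinely enters -- is the evaluation of the derivation $a_J(w)$ on the quadratic exponent: writing $B_{jk} := \braket{e_j}{Te_k}$ in an orthonormal basis, the derivative produces the \emph{symmetrized} combination $\tfrac12(B_{lk}+B_{kl})$, and it is precisely the symmetry $B_{jk}=B_{kj}$ (that is, $T \in \sD(V)$, from Lemma~\ref{lm:symmetry-of-T}) that makes this equal $\braket{u}{Tw}$ and cancel the creation term. The remaining steps -- invertibility of~$p_g$ and the reduction via Bogoliubov -- are routine.
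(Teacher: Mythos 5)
Your proposal is correct and follows essentially the same route as the paper: the same Bogoliubov reduction (using invertibility of $p_g$ and $q_g = T_g p_g$) to the single identity $\bigl(a_J(w) + a_J^\7(T_g w)\bigr)f_{T_g} = 0$, followed by the same cancellation in the complex-wave picture, with the symmetry $T_g^t = T_g$ entering at exactly the same point. The only (cosmetic) difference is that the paper obtains $a_J(w)f_T = \tfrac{i}{\sqrt2}\braket{u}{Tw}\,f_T$ by assembling the four field operators $\phi(v),\phi(Jv),\phi(Tv),\phi(JTv)$ computed via the Weyl system, whereas you differentiate the quadratic exponent of $f_T$ directly.
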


\begin{proof}
Using~\eqref{eq:Bogoliubov}, the condition~\eqref{eq:out-vacuum-B} 
can be written as
$$
a_J^\7(q_g v)\,f_{T_g} + a_J(p_g v)\,f_{T_g} = 0 
\word{for all} v \in V.
$$
On replacing $v$ by~$p_g^{-1} v$ and $T_g$ by any $T \in \sD(V)$,
this reduces to showing that
\begin{equation}
a_J^\7(Tv)\,f_T + a_J(v)\,f_T = 0  \word{for all} v \in V.
\label{eq:better-out-vacua} 
\end{equation}
By \eqref{eq:Weyl-system-one} and the defining formula
\eqref{eq:Gaussian-vector-B} for~$f_T$,
\begin{align}
\phi(v) f_T(u) 
&= - \sqrt{2}\,i \ddto{t} \exp\bigl( 
\quarter \braket{2u - tv}{tv} \bigr) f_T(u - tv)
\nonumber \\
&= - \sqrt{2}\,i \ddto{t} \exp\bigl( \quarter \bigl(
\braket{2u - tv}{tv} + \braket{u - tv}{Tu - tTv} \bigr) \bigr)
\nonumber \\
&= - \frac{i}{2\sqrt{2}} \bigl(
\braket{2u}{v} - \braket{v}{Tu} - \braket{u}{Tv} \bigr) 
e^{\quarter \braket{u}{Tu}}
\nonumber \\
&= - \tfrac{i}{\sqrt{2}} 
\bigl( \braket{u}{v} - \braket{u}{Tv} \bigr) f_T(u).
\label{eq:push-one-Gaussian} 
\end{align}
The last equality follows because $T$ is symmetric. 

Replacing $v$ by $Tv$ in \eqref{eq:push-one-Gaussian} yields 
$$
\phi(Tv) f_T(u) = - \tfrac{i}{\sqrt{2}} 
\bigl( \braket{u}{Tv} - \braket{Tv}{Tu} \bigr) f_T(u).
$$
Next, replacing $v$ by $Jv$ or $JTv$ 
in~\eqref{eq:push-one-Gaussian} and using
\eqref{eq:effect-of-J}, we obtain
\begin{align*}
\phi(Jv)\, f_T(u)
&= \tfrac{1}{\sqrt{2}}
\bigl( \braket{u}{v} + \braket{u}{Tv} \bigr) f_T(u),
\\
\phi(JTv)\, f_T(u)
&= \tfrac{1}{\sqrt{2}}
\bigl( \braket{u}{Tv} + \braket{Tv}{Tu} \bigr) f_T(u).
\end{align*}
Replacing these expressions in the left-hand side
of~\eqref{eq:better-out-vacua} yields complete cancellation:
$$
a_J^\7(Tv)\,f_T(u) + a_J(v)\,f_T(u)
= \half\bigl[ \phi(Tv) - i\phi(JTv) + \phi(v) + i\phi(Jv) \bigr]
f_T(u) = 0.
\eqno \qed
$$
\hideqed
\end{proof}

In the fermionic case, we obtain a similar formula
to~\eqref{eq:out-vacuum-B}, provided that $T_g$ is defined.

\begin{lema} 
\label{lm:out-vacuum-F}
If $g \in \rSO_*(V)$, the Gaussian vector $f_{T_g}$ lies in the 
out-vacuum sector of~$\sF(V)$:
\begin{equation}
a_{gJg^{-1}}(gv)\,f_{T_g} = 0  \word{for all} v \in V.
\label{eq:out-vacuum-F} 
\end{equation}
\end{lema}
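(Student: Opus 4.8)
The plan is to mirror the bosonic argument of Lemma~\ref{lm:out-vacuum-B} as closely as the finite-dimensional exterior-algebra setting permits. First I would invoke the Bogoliubov transformation~\eqref{eq:Bogoliubov}, which holds for every $g \in \rO(V)$ and hence for $g \in \rSO_*(V)$, to rewrite the vanishing condition~\eqref{eq:out-vacuum-F} as
$$
a_J^\7(q_g v)\,f_{T_g} + a_J(p_g v)\,f_{T_g} = 0 \word{for all} v \in V.
$$
Since membership in $\rSO_*(V)$ guarantees that $p_g$ is invertible, I would substitute $v \mapsto p_g^{-1} v$ and recall that $T_g = q_g p_g^{-1} \in \rSk(V)$. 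Exactly as in the bosonic case, this reduces the claim to the single universal identity
\begin{equation}
a_J^\7(Tv)\,f_T + a_J(v)\,f_T = 0 \word{for all} T \in \rSk(V), \ v \in V,
\label{eq:better-out-vacua-F}
\end{equation}
which no longer refers to~$g$.

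The creation term is immediate: by~\eqref{eq:birth-and-death}, $a_J^\7(Tv)$ is left exterior multiplication, so $a_J^\7(Tv)\,f_T = (Tv) \w f_T$. The substance of the proof is therefore the matching formula $a_J(v)\,f_T = -(Tv) \w f_T$. The key computation is the action of the annihilation operator on the quadratic generator~\eqref{eq:fermionic-hamiltonian}: starting from $H_T = \sum_{i,j} \braket{e_i}{Te_j}\, e_i \w e_j$, expanding $a_J(v)(e_i \w e_j) = \braket{v}{e_i}\,e_j - \braket{v}{e_j}\,e_i$ via~\eqref{eq:birth-and-death}, and collapsing the two index sums with the completeness relation $\sum_i \ketbra{e_i}{e_i} = 1$ together with the skewsymmetry $\braket{e_i}{Te_j} = -\braket{e_j}{Te_i}$ from~\eqref{eq:symmetry-of-T}, I expect to obtain
$$
a_J(v)\,H_T = -2\,Tv.
$$

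With this in hand, the passage from $H_T$ to the Gaussian $f_T = \exp^\w(\half H_T)$ is purely formal. Since $a_J(v)$ is a graded antiderivation of $\La^\8 V$ (as is clear from~\eqref{eq:birth-and-death}) and $H_T$ is an even, degree-$2$ element, hence central for the wedge product, the product rule telescopes to $a_J(v)\,H_T^{\w r} = r\,(a_J(v) H_T) \w H_T^{\w(r-1)} = -2r\,(Tv) \w H_T^{\w(r-1)}$. Summing the defining series~\eqref{eq:Gaussian-vector-F} then yields $a_J(v)\,f_T = -(Tv) \w f_T$, which cancels the creation term and establishes~\eqref{eq:better-out-vacua-F}. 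The only step demanding genuine care is the central computation $a_J(v)\,H_T = -2Tv$: both the bookkeeping of the two index contractions and, crucially, the correct appeal to the \emph{skew}symmetry of~$T$ (which here replaces the symmetry used in the bosonic Lemma~\ref{lm:out-vacuum-B}) must be handled precisely, since a sign slip there would spoil the cancellation. Everything downstream is routine.
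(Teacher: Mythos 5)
Your proof is correct, and after the shared opening reduction it takes a genuinely different route from the paper's. Both arguments begin identically: the Bogoliubov relation \eqref{eq:Bogoliubov} plus the substitution $v \mapsto p_g^{-1}v$ reduce the claim to the $g$-free identity $a_J^\7(Tv)\,f_T + a_J(v)\,f_T = 0$ for $T \in \rSk(V)$. From there the paper normalizes $v = e_1$, expands $f_T = \sum_{|K|\,\even} (\Pf T_K)\,\eps_K$ and applies the Laplace expansion of the Pfaffian along the first index, obtaining the cancellation from $Te_1 \w Te_1 = 0$. You instead compute $a_J(v)\,H_T = -2\,Tv$ directly from \eqref{eq:fermionic-hamiltonian} and \eqref{eq:birth-and-death} -- and your sign bookkeeping checks out: each of the two index contractions contributes $-Tv$, using $\braket{v}{Te_j} = -\braket{e_j}{Tv}$ from the skewsymmetry \eqref{eq:symmetry-of-T} -- and then exploit the fact that $a_J(v)$ is an odd antiderivation while $H_T$ is even and hence central, so the Leibniz rule telescopes through the exponential series \eqref{eq:Gaussian-vector-F} to give $a_J(v)\,f_T = -(Tv) \w f_T$, exactly cancelling $a_J^\7(Tv)\,f_T = (Tv) \w f_T$. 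Your derivation is shorter, basis-independent in its final form, and mirrors the bosonic Lemma~\ref{lm:out-vacuum-B} more transparently (there $\phi(v)$ acts as a first-order operator on a quadratic exponential; here $a_J(v)$ acts as an antiderivation on $\exp^\w(\half H_T)$). What the paper's Pfaffian route buys in exchange is that it works directly with the expansion \eqref{eq:Pfaffian-expansion}, whose Laplace-expansion machinery is reused elsewhere (e.g.\ in Lemma~\ref{lm:two-reflections} and the norm computations), so the authors get that formalism established as a by-product. Either argument is complete; yours is a legitimate and arguably cleaner alternative.
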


\begin{proof}
Just as in the proof of the previous lemma, one can rewrite
\eqref{eq:out-vacuum-F} as:
\begin{equation}
a_J^\7(q_g v)\,f_{T_g} + a_J(p_g v)\,f_{T_g} = 0 
\word{for all} v \in V.
\label{eq:out-vacuum-F-bis} 
\end{equation}
In this case, the hypothesis that $g \in \rSO_*(V)$ means that
$p_g$~is invertible and $T_g$~is defined. Thus, one can replace $v$
by~$p_g^{-1} v$ and $T_g$ by any $T \in \rSk(V)$, thereby reducing the
condition \eqref{eq:out-vacuum-F} to a formal analogue
of~\eqref{eq:better-out-vacua}: we claim that if $T \in \rSk(V)$, then
$$
a_J^\7(Tv)\,f_T + a_J(v)\,f_T = 0  \word{for all} v \in V.
$$

This is now an $\bR$-linear relation in $\sF(V)$, so we can assume
that $\|v\| = 1$ and then choose an orthonormal basis
$\{e_1,\dots,e_m\}$ for $V$ with $e_1 = v$. The requirement is now to
show that
$$
a_J^\7(Te_1)\,f_T + a_J(e_1)\,f_T = 0 \word{whenever} T \in \rSk(V),
$$
where $f_T$ is given by \eqref{eq:Pfaffian-expansion}.

For completeness, we adapt the argument of \cite[Sec.~6.2]{Polaris}.
The Laplace expansion of the Pfaffian $\Pf T_K$ (with respect to the 
chosen basis) in the $k$th row and column yields
$$
\Pf T_K = \sum_{l\neq k}
(\pm)_{kl}\, \braket{e_k}{Te_l}\, \Pf T_{K\less\{k,l\}}\,,
$$
where $(\pm)_{kl}$ is the sign of the shuffle permutation 
$K \mapsto (k,l, K\less\{k,l\})$ and $K \subseteq \{1,\dots,m\}$ is
even. Applying $a(e_1)$ to $f_T = \sum_{|K|\,\even} (\Pf T_K)\,\eps_K$
gives
\begin{align*}
a_J(e_1) f_T 
&= a_J(e_1) \sum_{L\not\ni 1} \sum_{k\in L} \braket{e_1}{Te_k}
(\Pf T_{L\less\{k\}})\, e_1 \w e_k \wyw \eps_{L\less\{k\}}
\\
&= - \sum_{k=1}^m \sum_{M\not\ni 1} 
\braket{e_k}{Te_1}\, (\Pf T_M)\, e_k \w \eps_M
= - \sum_{M\not\ni 1} (\Pf T_M)\, Te_1 \w \eps_M,
\end{align*}
and since $a_J^\7(Te_1)\,f_T = Te_1 \w f_T$, this implies that
$$
a_J^\7(Te_1)\,f_T + a_J(e_1)\,f_T 
= \sum_{K\ni 1} (\Pf T_K)\, Te_1 \w \eps_K.
$$
On setting $L := K \less \{1\}$, the right-hand side can be further 
expanded as
\begin{align}
a_J^\7(Te_1)\,f_T + a_J(e_1)\,f_T 
&= \sum_{L\not\ni 1} (\Pf T_{\{1\}\cup L})\, Te_1 \w e_1 \w \eps_L
\nonumber \\
&= \sum_{L\not\ni 1} \sum_{k\in L} \braket{e_1}{Te_k}\,
(\Pf T_{L\less\{k\}})\, Te_1 \w e_1 \w e_k \w \eps_{L\less\{k\}}
\nonumber \\
&= \sum_{k=1}^m \braket{e_1}{Te_k} \sum_{|K|\,\even}
(\Pf T_K)\, e_k \w Te_1 \w e_1 \w \eps_K
\nonumber \\
&= \sum_{|K|\,\even} (\Pf T_K)\, Te_1 \w Te_1 \w e_1 \w \eps_K = 0,
\label{eq:kill-Gaussian-F} 
\end{align}
by the cancellation $Te_1 \w Te_1 = 0$. The result follows.
\end{proof}

\subsection{The metaplectic representation} 
\label{ssc:metaplectic-repn}

The way is now clear to determine metaplectic representation of
$\rSp(V)$. Lemma~\ref{lm:out-vacuum-B} shows that $\nu(g)\Om$ is a
scalar multiple of~$f_{T_g}$. Unitarity of $\nu(g)$ demands that
\begin{equation}
\nu(g)\,\Om(u) := c_g f_{T_g}(u) 
= c_g \exp\bigl( \quarter\braket{u}{T_g u} \bigr),
\word{where} \|f_{T_g}\| = \det^{-1/4}(1 - T_g^2),
\label{eq:push-the-vacuum-B} 
\end{equation}
thus the scalar prefactor $c_g \in \bC$ must satisfy
$|c_g| = \det^{+1/4}(1 - T_g^2)$.

\goodbreak 

Recall that $1 - T_g^2$ is positive definite, so $\det(1 - T_g^2) > 0$
and the powers $\det^{\pm 1/4}(1 - T_g^2)$ are unambiguous. The
crucial issue is the choice of the phase of~$c_g$. One possibility,
since $\det^{+1/4}(1 - T_g^2) = \det^{-1/4}(p_g p_g^t)$, is to take
$c_g := \det^{-1/2} p_g^t$; and this choice leads to the ``ordinary''
metaplectic representation \cite{Folland89, Wallach18} using the
double covering group $\rMp(V)$. However, this option cannot avail if
$V$ were infinite-dimensional, since $p_g$ will generally not have a
determinant at~all. To be able to extend these procedures to such~$V$
(subject to the Shale--Stinespring conditions \cite{Shale62,
ShaleS65}), we fix the phase by demanding that $c_g > 0$:
\begin{equation}
c_g := \det^{+1/4}(1 - T_g^2)  \word{for all}  g \in \rSp(V).
\label{eq:prefactor-B} 
\end{equation}
 
Once the effect of $\nu(g)$ on the vacuum sector is fixed by
\eqref{eq:push-the-vacuum-B} and~\eqref{eq:prefactor-B}, its
intertwining properties \eqref{eq:parts-entwine-bos} allow to define
it on principal vectors $E_v = e^{\quarter\braket{v}{v}} \bt(v)\,\Om$.
This leads right away to an integral kernel for~$\nu(g)$, which was
originally derived by Vergne~\cite{Vergne77}. Explicitly
\cite{Hyperion, Kamuk}:
\begin{align*}
K_{\nu(g)}(u,v) &:= \nu(g) E_v(u)
= e^{\quarter \braket{v}{v}} \nu(g) \bt(v)\,\Om(u)
= e^{\quarter \braket{v}{v}}\, \bt(gv)\, f_{T_g}(u) 
\\
&= c_g \exp\bigl[ \quarter \bigl( \braket{v}{v}
+ \braket{2u - gv}{gv} + \braket{u - gv}{T_g(u - gv)} \bigr) \bigr].
\end{align*}
Now, using $g = (1 + T_g)p_g$, the right-hand side expands to
$$
c_g \exp\bigl[ \quarter \bigl( 
\braket{v}{v} - \braket{(1 + T_g)p_gv}{(1 - T_g^2)p_gv}
+ \braket{u}{T_g u} + 2\braket{u}{(1 - T_g) gv} \bigr) \bigr].
$$
By Lemma~\ref{lm:T-hat}, this simplifies to
\begin{align}
K_{\nu(g)}(u,v)
&= c_g \exp\bigl[ \quarter \bigl( \braket{u}{T_g u}
- \braket{T_g p_g v}{p_g^{-t}v} + 2\braket{u}{p_g^{-t}v} \bigr) \bigr]
\nonumber \\
&= c_g \exp\bigl[ \quarter \bigl( \braket{u}{T_g u}
+ 2 \braket{p_g^{-1} u}{v} + \braket{\That_g v}{v} \bigr) \bigr].
\label{eq:nu-kernel} 
\end{align}

Now we can check that the intertwining property 
\eqref{eq:full-entwine-bos} does indeed hold.

\begin{lema} 
\label{lm:intertwine-one}
If $g \in \rSp(V)$ and $v \in V$, then
$\nu(g)\,\phi(v) = \phi(gv)\,\nu(g)$ as operators on~$\sB(V)$.
\end{lema}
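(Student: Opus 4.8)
The plan is to verify the operator identity on the dense common core $\sD_0 = \linspan\set{E_w : w \in V}$, thereby reducing the claim to the single vector equation $\nu(g)\,\phi(v)\,E_w = \phi(gv)\,\nu(g)\,E_w$ for every $w \in V$. Since $\sD_0$ is a core for $\phi(v)$ and is preserved by it, the span $\nu(g)\,\sD_0$ is a core for the selfadjoint operator $\nu(g)\,\phi(v)\,\nu(g)^{-1}$; and because the kernel computation exhibits $\nu(g)E_w$ (up to a scalar) as $\bt(gw)\,f_{T_g}$, a Weyl-translate of a Gaussian, each such vector is an analytic vector lying in $\Dom\phi(gv)$. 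Hence equality of the two selfadjoint operators will follow once the vector equation is checked on this core, by essential selfadjointness.

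For the left-hand side I would use \eqref{eq:core-values} to write $\phi(v)E_w$ as a combination of $E_w$ and the function $u \mapsto \braket{u}{v}\,E_w(u)$, noting that this last function equals $2\,\ddto{\eps}E_{w+\eps v}$. As $\nu(g)$ is bounded it commutes with this parameter derivative, so $\nu(g)\,\phi(v)\,E_w$ becomes an explicit scalar multiple of $\nu(g)E_w$, the scalar being read off from the kernel \eqref{eq:nu-kernel} by differentiating its dependence on~$w$. For the right-hand side I would apply the generator definition \eqref{eq:bosonic-field} of $\phi(gv)$ directly to the quadratic exponential $\nu(g)E_w$: differentiating $\bt(t\,gv)\,\nu(g)E_w$ at $t=0$ likewise produces a scalar multiple of $\nu(g)E_w$. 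Both computations therefore reduce the identity to the equality of two explicit scalar functions of~$u$.

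Comparing the two scalars splits into a part linear in $u$ and a constant part. For the $u$-linear terms I would factor $g = (1 + T_g)p_g$, so that $(1 - T_g)gv = (1 - T_g^2)p_g v$, and then invoke \eqref{eq:fancy-formula}, i.e.\ $p_g^t(1 - T_g^2)p_g = 1$, to identify $\braket{u}{(1 - T_g^2)p_g v}$ with $\braket{p_g^{-1}u}{v}$; this forces the $u$-dependent coefficients on the two sides to agree. For the constant part, the two leftover contributions are $\braket{\That_g w}{v}$ and, after using Lemma~\ref{lm:T-hat} in the form $\That_g = -p_g^{-1}T_g p_g$ to absorb the $\bC$-linear factors $p_g,\,p_g^{-t}$, also $\braket{\That_g v}{w}$. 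These coincide by the symmetry of $\That_g$, which is Lemma~\ref{lm:symmetry-of-T} applied to $g^{-1}$ and expressed through \eqref{eq:symmetry-of-T}.

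The main obstacle I anticipate is purely bookkeeping: keeping the antilinearity of $T_g$ and $\That_g$ straight, remembering that $\braket{\cdot}{\cdot}$ is antilinear in its first argument, and correctly passing the $\bC$-linear factors across the inner product via $\braket{Au}{w} = \braket{u}{A^t w}$ for $\bC$-linear~$A$ (equivalently, that the Hilbert adjoint coincides with the $d$-transpose on linear operators). Once the two scalar coefficients are laid side by side, the matching is dictated by exactly two structural inputs, namely \eqref{eq:fancy-formula} and the symmetry of $\That_g$, with no further ingredient required.
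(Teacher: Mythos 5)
Your proposal is correct, but it takes a genuinely different route from the paper. The paper first reduces the field identity to the bounded-operator identity $\nu(g)\,\bt(v) = \bt(gv)\,\nu(g)$ (from which the generator identity follows by Stone's theorem, with no domain bookkeeping at all), and then proves that Weyl-operator identity by computing the integral kernels of both sides via the general Gaussian integral of Lemma~\ref{lm:big-integral}. You instead work directly at the level of the unbounded fields on the core $\sD_0$ of principal vectors: both $\nu(g)\,\phi(v)\,E_w$ and $\phi(gv)\,\nu(g)\,E_w$ come out as explicit affine-in-$u$ multipliers acting on $\nu(g)E_w$, and the coefficients match. I checked the matching: the $u$-linear parts agree because $(1-T_g)g = (1-T_g^2)p_g = p_g^{-t}$ by \eqref{eq:fancy-formula}, and the constant parts agree because $p_g^{-1}g = 1 - \That_g$ (Lemma~\ref{lm:T-hat}) together with the symmetry $\braket{\That_g v}{w} = \braket{\That_g w}{v}$ from \eqref{eq:symmetry-of-T} --- exactly the two structural inputs you identify, which are also the ones the paper invokes at the end of its kernel computation. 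What your route buys is the complete avoidance of the Gaussian integral $I(S,T;v,w)$: you need only the closed form of $K_{\nu(g)}(u,w) = \nu(g)E_w(u)$ and the pointwise action \eqref{eq:core-values}. What it costs is the functional-analytic overhead you correctly flag: you must argue that $\nu(g)\sD_0$ is a core for $\nu(g)\phi(v)\nu(g)^{-1}$, that each $\nu(g)E_w \propto \bt(gw)f_{T_g}$ lies in $\Dom\phi(gv)$, and then conclude by the fact that a selfadjoint operator admits no proper symmetric extension; all of this is standard and your sketch of it is adequate, but the paper's detour through the unitaries $\bt$ sidesteps it entirely. A minor remark: your argument yields only the generator identity, not the stronger Weyl-level identity \eqref{eq:intertwine-Weyl} that the paper obtains en route (though the latter could then be recovered by exponentiating).
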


\begin{proof}
It suffices to check that 
\begin{equation}
\nu(g)\,\bt(v) = \bt(gv)\,\nu(g),
\label{eq:intertwine-Weyl} 
\end{equation}
since \eqref{eq:full-entwine-bos} will follow on replacing~$v$ by a
real multiple $tv$ and differentiating at $t = 0$.

The integral kernel for $\bt(v)$ is just
\begin{align*}
K_{\bt(v)}(u,w) &= \braket{E_u}{\bt(v)E_w}
= e^{-\quarter\braket{v}{v+2w}} \braket{E_u}{E_{v+w}}
\\
&= \exp \bigl[ \quarter \bigl( \braket{2u}{v + w}
- \braket{v}{v + 2w} \bigr) \bigr].
\end{align*}

Taking $A := \nu(g)\,\phi(v)$ and $B := \phi(gv)\,\nu(g)$, we then 
compute, using \eqref{eq:big-integral-general}:
\begin{align*}
K_A(u,w) &= \int_V K_{\nu(g)}(u,t)\, K_{\bt(v)}(t,w)\, 
e^{-\half\braket{t}{t}} \,dt
\\
&= c_g\, e^{\quarter(\braket{u}{T_g u} - \braket{v}{v+2w})}\,
I(\That_g, 0; v + w, p_g^{-1}u)
\\
&= c_g\, e^{\quarter(\braket{u}{T_g u} - \braket{v}{v+2w})}\,
\exp \bigl[ \quarter \bigl( \braket{\That_g(v + w)}{v + w}
+ 2\braket{p_g^{-1}u}{v + w} \bigr) \bigr]
\\
&= K_{\nu(g)}(u,w)\, \exp \bigl[ \quarter \bigl(
2\braket{p_g^{-1}u}{v} - \braket{(1 - \That_g)v}{v} 
- 2\braket{(1 - \That_g)v}{w} \bigr) \bigr].
\end{align*}

In like manner,
\begin{align*}
K_B(u,w) &= \int_V K_{\bt(gv)}(u,t)\, K_{\nu(g)}(t,w)\, 
e^{-\half\braket{t}{t}} \,dt
\\
&= c_g\, e^{\quarter(\braket{2u - gv}{gv} - \braket{\That_g w}{w})}\,
I(0, T_g; p_g^{-t}w, u - gv)
\\
&= c_g\, e^{\quarter(\braket{2u - gv}{gv} - \braket{\That_g w}{w})}\,
\exp \bigl[ \quarter \bigl( 2\braket{p_g^{-1}(u- gv)}{w}
+ \braket{u - gv}{T_g(u - gv)} \bigr) \bigr]
\\
&= K_{\nu(g)}(u,w)\, \exp \bigl[ \quarter \bigl(
2\braket{u}{(1 - T_g)gv} - \braket{gv}{(1 - T_g)gv}
- 2\braket{p_g^{-1}gv}{w} \bigr) \bigr].
\end{align*}
Using the relations $(1 - T_g)g = (1 - T_g^2)p_g = p_g^{-t}$, that 
follows from~\eqref{eq:fancy-formula}; and $1 - \That_g = p_g^{-1}g$,
from Lemma~\ref{lm:T-hat}, we see that the exponents on both 
right-hand sides coincide, and thus $K_A(u,w) \equiv K_B(u,w)$, as
required.
\end{proof}

\marker
Although the formula \eqref{eq:nu-kernel} fully describes the
metaplectic representation, because the principal vectors generate a
dense subspace of~$\sB(V)$, it is for that very reason not
well-adapted to a comparison with the spin representation of the
fermionic case. It is better to consider the effect of $\nu(g)$ on
Gaussian elements of~$\sB(V)$.

\begin{prop} 
\label{pr:push-Gaussians} 
For all $g,h \in \rSp(V)$, the following identity is valid:
\begin{equation}
\nu(g)\,f_{T_h}
= \det^{+1/4}(1 - T_g^2)\, \det^{-1/2}(1 - T_h\That_g)\, f_{T_{gh}}\,.
\label{eq:push-Gaussian-B} 
\end{equation}
\end{prop}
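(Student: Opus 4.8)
The plan is to compute $\nu(g)\,f_{T_h}$ directly from the integral kernel of~$\nu(g)$ and then recognize the resulting integral as a specialization of the master Gaussian integral of Lemma~\ref{lm:big-integral}. Applying the kernel representation of Proposition~\ref{pr:operator-kernel} to the antiholomorphic Gaussian $f_{T_h}(v)=\exp(\quarter\braket{v}{T_h v})$ from~\eqref{eq:Gaussian-vector-B}, together with the explicit kernel~\eqref{eq:nu-kernel}, I would first write
$$
\nu(g)\,f_{T_h}(u) = c_g\, e^{\quarter\braket{u}{T_g u}}
\int_V \exp\Bigl[\quarter\bigl(\braket{v}{T_h v} + \braket{\That_g v}{v} + 2\braket{p_g^{-1}u}{v}\bigr)\Bigr]\, e^{-\half\braket{v}{v}}\,dv,
$$
with $c_g=\det^{+1/4}(1-T_g^2)$ by~\eqref{eq:prefactor-B}. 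The integrand is exactly the one defining $I(S,T;v',w')$ in~\eqref{eq:big-integral-general}, with $S=\That_g$, $T=T_h$, $v'=0$ and $w'=p_g^{-1}u$, and the Gaussian weight $e^{-\half\braket{v}{v}}$ is the one appearing in Lemma~\ref{lm:big-integral}. This identification is legitimate because both $T_h$ and $\That_g=T_{g^{-1}}$ lie in $\sD(V)$ by Lemma~\ref{lm:symmetry-of-T}, so the hypotheses of the integral formula are met.

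Second, I would evaluate $I(\That_g,T_h;0,p_g^{-1}u)$. With $v'=0$ the first two terms of the exponent in~\eqref{eq:big-integral-general} vanish, and only the prefactor $\det^{-1/2}(1-T_h\That_g)$ and the quadratic term $\exp\bigl[\quarter\braket{p_g^{-1}u}{T_h(1-\That_g T_h)^{-1}p_g^{-1}u}\bigr]$ survive. Combining this with the $c_g$ prefactor and the $\braket{u}{T_g u}$ term carried outside the integral gives
$$
\nu(g)\,f_{T_h}(u) = \det^{+1/4}(1-T_g^2)\,\det^{-1/2}(1-T_h\That_g)\,
\exp\Bigl[\quarter\bigl(\braket{u}{T_g u} + \braket{p_g^{-1}u}{T_h(1-\That_g T_h)^{-1}p_g^{-1}u}\bigr)\Bigr].
$$
The two scalar prefactors already match the claimed right-hand side, so the whole problem reduces to identifying the bracketed exponent with $\quarter\braket{u}{T_{gh}u}$, i.e.\ with the exponent of $f_{T_{gh}}(u)$.

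Third, and this is where the genuine content lies, I would match the exponent using the composition formula~\eqref{eq:T-compositions},
$$
T_{gh} = T_g + p_g^{-t}\,T_h(1-\That_g T_h)^{-1}p_g^{-1},
$$
so that $\braket{u}{T_{gh}u}=\braket{u}{T_g u}+\braket{u}{p_g^{-t}\,T_h(1-\That_g T_h)^{-1}p_g^{-1}u}$. It then suffices to move the factor $p_g^{-t}$ into the left-hand slot of the scalar product. The key observation is that for a $\bC$-\emph{linear} operator the transpose $A^t$ (defined by $d(u,A^tv)=d(Au,v)$) coincides with the adjoint $A^\7$ relative to the complex scalar product~\eqref{eq:scalar-product}: from $s(Au,v)=d(JAu,v)=d(AJu,v)=s(u,A^tv)$ for $\bC$-linear $A$ one deduces $\braket{Au}{v}=\braket{u}{A^tv}$, whence $A^t=A^\7$. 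Since $p_g$ is $\bC$-linear, $p_g^{-t}=(p_g^{-1})^\7$, and therefore $\braket{u}{p_g^{-t}\,X\,p_g^{-1}u}=\braket{p_g^{-1}u}{X\,p_g^{-1}u}$ for $X=T_h(1-\That_g T_h)^{-1}$. This is precisely the quadratic term produced by the integral, so the exponents agree and the proof closes.

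The main obstacle is this last step: keeping the bookkeeping of transposes versus complex adjoints exact, so that $p_g^{-t}$ really crosses the antilinear slot of $\braket{\cdot}{\cdot}$ to become $p_g^{-1}$ acting on~$u$. I would be careful to invoke the identity $A^t=A^\7$ only for the $\bC$-linear $p_g$, never for the antilinear $T$'s, and to confirm that the branch of $\det^{-1/2}(1-T_h\That_g)$ delivered by the Gaussian integral is the same continuous branch intended in the statement; both points are guaranteed by $\That_g,T_h\in\sD(V)$ and by the continuity of $B\mapsto\det^{-1/2}B$ established around~\eqref{eq:Gaussian-pairing-B}.
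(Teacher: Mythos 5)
Your proposal is correct and follows essentially the same route as the paper: apply the kernel formula \eqref{eq:nu-kernel} to $f_{T_h}$, recognize the integral as $I(\That_g,T_h;0,p_g^{-1}u)$ via Lemma~\ref{lm:big-integral}, and match the resulting exponent with $\quarter\braket{u}{T_{gh}u}$ through the composition formula \eqref{eq:T-compositions}. Your explicit justification that $p_g^{-t}$ crosses the scalar product as $(p_g^{-1})^\7$ for the $\bC$-linear $p_g$ is a detail the paper leaves implicit, and it is handled correctly.
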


\begin{proof}
Using Lemma~\ref{lm:big-integral}, we find that
\begin{align*}
\nu(g)\,f_{T_h}(u)
&= \int_V K_{\nu(g)}(u,v) f_{T_h}(v) \,e^{-\half\braket{v}{v}} \,dv
\nonumber \\
&= c_g \int_V \exp\bigl[ \quarter \bigl( \braket{u}{T_g u}
+ \braket{v}{T_h v} + 2\braket{p_g^{-1}u}{v} 
+ \braket{\That_g v}{v} \bigr) \bigr] e^{-\half\braket{v}{v}} \,dv
\nonumber \\
&= c_g\, e^{\quarter\braket{u}{T_g u}} I(\That_g, T_h; 0, p_g^{-1}u)
\nonumber \\
&= c_g \det^{-1/2}(1 - T_h\That_g)\, 
\exp\bigl[ \quarter \bigl\{ \braket{u}{T_g u}
+ \braket{p_g^{-1}u}{T_h(1 - \That_g T_h)^{-1} p_g^{-1} u} \bigr\}
\bigr]
\end{align*}
and using the formula \eqref{eq:T-compositions}, the third factor
simplifies to 
$\exp\bigl[ \quarter \braket{u}{T_{gh} u} \bigr] = f_{T_{gh}}(u)$.
\end{proof}

Among other things, the last result shows that 
$\nu(g)\,\nu(h)\,\Om$ and $\nu(gh)\,\Om$ are both multiples of 
$f_{T_{gh}}$; they differ by a multiplicative factor $c(g,h)$.
This factor may be read off from \eqref{eq:push-the-vacuum-B}
and~\eqref{eq:push-Gaussian-B}, but it is instructive to compute it 
directly from the integral kernels~\eqref{eq:nu-kernel}.

\begin{prop} 
\label{pr:nu-composed}
The metaplectic operators $\nu$ form a projective representation of
$\rSp(V)$:
\begin{subequations}
\label{eq:cocycle-B} 
\begin{equation}
\nu(g) \nu(h) = c(g,h)\,\nu(gh),
\label{eq:cocycle-B-defn} 
\end{equation}
where the group $2$-cocycle $c(g,h) \in \bT$ is given explicitly by
\begin{equation}
c(g,h) := \exp\bigl[ i\Arg\det^{-1/2}(1 - T_h\That_g) \bigr].
\label{eq:cocycle-B-formula} 
\end{equation}
\end{subequations}
\end{prop}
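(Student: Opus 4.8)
The plan is to show first that $\nu(g)\nu(h)$ and $\nu(gh)$ coincide up to a scalar, then to compute that scalar and strip off its modulus.

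\textbf{Proportionality.} Composing the intertwining relation \eqref{eq:full-entwine-bos} with itself gives $\nu(g)\nu(h)\,\phi(v) = \phi(ghv)\,\nu(g)\nu(h)$, which is exactly the relation defining $\nu(gh)$. Hence $X := \nu(gh)^{-1}\nu(g)\nu(h)$ commutes with every $\phi(v)$, and therefore with every annihilation operator $a_J(v)$; so $X$ preserves the vacuum sector $\bigcap_{v\in V}\ker a_J(v) = \bC\,\Om$, giving $X\Om = c(g,h)\,\Om$. Since $X$ also commutes with each creation operator $a_J^\7(v)$ and $\Om$ is cyclic for these, $X = c(g,h)\cdot 1$, i.e.\ \eqref{eq:cocycle-B-defn} holds; unitarity of the $\nu$'s forces $|c(g,h)| = 1$. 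The $2$-cocycle identity for $c$ is then automatic from associativity of operator multiplication.

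\textbf{Evaluation.} Following the remark preceding the statement, I would compute $c(g,h)$ directly from the kernels \eqref{eq:nu-kernel}. The kernel of the product is $K_{\nu(g)\nu(h)}(u,w) = \int_V K_{\nu(g)}(u,t)\,K_{\nu(h)}(t,w)\,e^{-\half\braket{t}{t}}\,dt$; substituting \eqref{eq:nu-kernel} twice, the $t$-integral is precisely of the type evaluated in Lemma~\ref{lm:big-integral}, with $S = \That_g$, $T = T_h$ and the first-degree data read off from the cross terms $\braket{p_g^{-1}u}{t}$ and $\braket{p_h^{-1}t}{w}$. Carrying out that Gaussian integral produces the factor $\det^{-1/2}(1 - T_h\That_g)$, and collapsing the remaining exponent by means of the composition formula for $T_{gh}$ (as in the proof of Proposition~\ref{pr:push-Gaussians}) should leave exactly $c(g,h)\,K_{\nu(gh)}(u,w)$ with
\[
c(g,h) = \frac{c_g\,c_h}{c_{gh}}\,\det^{-1/2}(1 - T_h\That_g).
\]
The same value is obtained more quickly by applying \eqref{eq:cocycle-B-defn} to $\Om = f_0$: since $\nu(h)\Om = c_h f_{T_h}$ and Proposition~\ref{pr:push-Gaussians} gives $\nu(g)f_{T_h} = c_g\det^{-1/2}(1 - T_h\That_g)\,f_{T_{gh}}$, comparing the coefficients of $f_{T_{gh}}$ against $\nu(gh)\Om = c_{gh}f_{T_{gh}}$ reproduces the displayed expression.

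\textbf{Phase.} Each $c_g = \det^{+1/4}(1 - T_g^2)$ is a positive real number, because $1 - T_g^2 > 0$ by Lemma~\ref{lm:symmetry-of-T}; hence the prefactor $c_g c_h/c_{gh}$ is real and positive, so $c(g,h)$ has the same argument as $\det^{-1/2}(1 - T_h\That_g)$. Combined with $|c(g,h)| = 1$ this yields $c_g c_h/c_{gh} = \bigl|\det^{-1/2}(1 - T_h\That_g)\bigr|^{-1}$, and therefore
\[
c(g,h) = \frac{\det^{-1/2}(1 - T_h\That_g)}{\bigl|\det^{-1/2}(1 - T_h\That_g)\bigr|} = \exp\bigl[i\Arg\det^{-1/2}(1 - T_h\That_g)\bigr],
\]
which is \eqref{eq:cocycle-B-formula}. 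One can also verify this modulus identity algebraically, without invoking unitarity, from $1 - T_g^2 = (p_g p_g^t)^{-1}$ (Lemma~\ref{lm:symmetry-of-T}) and the factorization $p_{gh} = p_g(1 - \That_g T_h)p_h$ of \eqref{eq:p-compositions}, using $\det(1 - \That_g T_h) = \det(1 - T_h\That_g)$. The step I expect to be delicate is not any single algebraic identity but the consistent tracking of the square-root branch: one must check that $1 - T_h\That_g$ lies in the contractible domain of operators with $\Re\braket{v}{Bv} > 0$ on which the continuous branch $\det^{-1/2}$ was fixed after \eqref{eq:Gaussian-pairing-B} — which holds since $\That_g, T_h \in \sD(V)$ — and retain that branch throughout the Gaussian integration, so that no spurious sign enters the cocycle.
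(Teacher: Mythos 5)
Your proof is correct and follows essentially the same route as the paper: the paper likewise computes the kernel of $\nu(g)\nu(h)$ via Lemma~\ref{lm:big-integral} and the composition formulas \eqref{eq:compositions}, arriving at $c(g,h) = c_g c_h c_{gh}^{-1}\det^{-1/2}(1 - T_h\That_g)$ and thence at \eqref{eq:cocycle-B-formula}. Your only substantive additions are the a priori proportionality argument via the vacuum sector (which the kernel computation already renders redundant) and the explicit check, using $1 - T_g^2 = (p_g p_g^t)^{-1}$ together with \eqref{eq:p-compositions}, that $c_g c_h/c_{gh} = \bigl|\det^{-1/2}(1 - T_h\That_g)\bigr|^{-1}$ --- a step the paper's final equality in \eqref{eq:explicit-cocycle-B} asserts without detail.
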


\begin{proof}
The integral kernel of $\nu(g) \nu(h)$ is computed directly as
\begin{align*}
& \int_V K_{\nu(g)}(u,w) K_{\nu(h)}(w,v) 
e^{-\half \braket{w}{w}} \,dw
\\
&= c_g c_h e^{\quarter(\braket{u}{T_g u} + \braket{\That_h v}{v})}\,
I(\That_g, T_h; p_h^{-t} v, p_g^{-1} u)
\\
&= c_g c_h \det^{-1/2}(1 - T_h\That_g) 
\exp \bigl[ \quarter\bigl( \braket{u}{T_{gh}u}
+ 2\braket{p_{gh}^{-1}u}{v} + \braket{\That_{gh}v}{v} \bigr) \bigr],
\end{align*}
where the formulas \eqref{eq:compositions} have again been used.
The right-hand side equals $c(g,h)\, K_{\nu(gh)}(u,v)$, where
\begin{align}
c(g,h) &= c_g c_h c_{gh}^{-1} \det^{-1/2}(1 - T_h\That_g)  
\nonumber \\
&= \det^{1/4}(1 - T_g^2) \det^{1/4}(1 - T_h^2)
\det^{-1/4}(1 - T_{gh}^2) \det^{-1/2}(1 - T_h\That_g) 
\nonumber \\
&= \exp\bigl[ i\Arg\det^{-1/2}(1 - T_h\That_g) \bigr],
\label{eq:explicit-cocycle-B} 
\end{align}
as claimed. In the exponent, `$\Arg$' denotes the principal value of
the argument, determined by analytic continuation from the positive
reals. Notice that since $1 - T_g^2 = (p_g p_g^t)^{-1}$, the cocycle 
can also be written as $c(g,h) 
= \exp\bigl[ -i\Arg\det^{-1/2}(p_g^{-1} p_{gh} p_h^{-1}) \bigr]$.
\end{proof}

Now the Gaussians $f_T$, being \textit{even} functions of~$u$, densely
span only the even subspace
$$
\sB_0(V) := \set{f \in \sB(V) : f(-u) \equiv f(u)}.
$$
One can rewrite \eqref{eq:push-Gaussian-B} as 
\begin{equation}
\nu(g)\,f_S = c_g\, \vf_g(S)\, f_{g\.S}  \word{where} 
\vf_g(S) := \det^{-1/2}(1 - S\That_g),
\label{eq:act-on-Gaussians-B}  
\end{equation}
for all $S \in \sD(V)$, with $g \. S$ denoting the action of $\rSp(V)$
on~$\sD(V)$, given by \eqref{eq:group-action-on-T} and satisfying
$g \. T_h = T_{gh}$. Since $g \. 0 = T_g$, it follows that $\Om$ is a
cyclic vector for the action of~$\nu$ on~$\sB_0(V)$.

The complementary subspace $\sB_1(V)$ of odd functions is spanned by
vectors of the form $\phi(v)\,f_S$, on account
of~\eqref{eq:push-one-Gaussian}: 
$$
\phi(v) f_S(u) = - \tfrac{i}{\sqrt{2}} \braket{u}{(1 - S)v}\, f_S(u);
\word{i.e.,} \phi(v) f_S = -i(1 - S)v\, f_S. 
$$
Notice that $(1 - S)$ is invertible, and any odd function in~$\sB(V)$
is of the form $w\,h$ with $w \in V$ and $h \in \sB_0(V)$.
Now, using Lemma~\ref{lm:intertwine-one}, the relation
$$
\nu(g)\,[\phi(v)\,f_S] = \phi(gv)\,\nu(g)\,f_S 
\propto \phi(gv)\,f_{g\.S}
$$
shows that $\nu$ also acts irreducibly on~$\sB_1(V)$. 

These subrepresentations of $\nu$ on $\sB_0(V)$ and $\sB_1(V)$ are
inequivalent; this can be seen by restricting $\nu$ to the unitary
subgroup $\rU_J(V)$: the action of $\Ga_\bos(\rU_J(V))$ on $\sB_0(V)$ 
has a fixed point, namely~$\Om$; whereas it possesses no fixed vector 
in~$\sB_1(V)$.

\subsection{The spin representation} 
\label{ssc:spin-repn}

The spin representation $\mu$ of the orthogonal group $\rO(V)$, acting 
on the Fock space $\sF(V)$, requires some more preparation, since the 
out-vacuum sector $\bC \mu(g)\,\Om$ has only been identified for 
elements~$g$ in the subset $\rSO_*(V)$. To characterize it in the 
general case, we borrow the procedure of~\cite{Rhea} which follows 
the work of Ruijsenaars~\cite{Ruijsenaars78} on Bogoliubov 
transformations.

Recall from the discussion after~\eqref{eq:defect-operator} that the 
manifold $\rO(V)$ is stratified by
$$
\dim(\ker p_g) = \dim(\ker p_g^t) =: k \in \{0,1,\dots,m\}.
$$
The top stratum, where $\ker p_g = \{0\}$, is just $\rSO_*(V)$. When
$k > 0$, the linear operator $p_g$ maps $(\ker p_g)^\perp$ to
$\im p_g = (\ker p_g^t)^\perp$ bijectively. Choose an orthonormal
basis $\{e_1,\dots,e_k\}$ for $\ker p_g^t$; and for $i = 1,\dots,k$,
define $r_i \in \End_\bR V$ by
$$
r_i(e_i) := Je_i\,;  \quad r_i(Je_i) := e_i\,; \word{and}
r_i(v) := -v \word{when} d(v, e_i) = d(v, Je_i) = 0.
$$
Each $-r_i$ is the reflection in the real hyperplane of $(V,d)$
orthogonal to $e_i + Je_i$.

Now put $r := r_1 \cdots r_k \in \rO(V)$; then $r$~acts as~$-1$ on
$\im p_g$ and maps the complementary subspace $\ker p_g^t$ to itself.
Moreover, $q_g(\ker p_g) \subseteq \ker p_g^t$ since
$p_g^t q_g = - q_g^t p_g$ by~\eqref{eq:p-q-relations}. Thus
$h := rg \in \rO(V)$ decomposes as the direct sum of two maps whose 
linear part is the invertible map $p_h = rq_g \oplus p_g$ from
$\ker p_g \oplus (\ker p_g)^\perp$ onto
$\ker p_g^t \oplus (\ker p_g^t)^\perp$. It follows that
$h \in \rSO_*(V)$.

The next proposition gives the out-vacuum sector for~$g$.

\begin{prop} 
\label{pr:defect-vacuum}
If $\dim(\ker p_g) = k$ and $r := r_1 \cdots r_k$ as above, then
$h := rg \in \rSO_*(V)$, and for each $v \in V$ there holds:
\begin{equation}
\bigl( a_J^\7(q_g v) + a_J(p_g v) \bigr) 
\bigl[ e_1 \wyw e_k \w f_{T_h} \bigr] = 0.
\label{eq:defect-vacuum} 
\end{equation}
\end{prop}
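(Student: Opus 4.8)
The plan is to evaluate the operator $a_J^\7(q_g v)+a_J(p_g v)$ on the vector $\Phi:=e_1\wyw e_k\w f_{T_h}$ by leaning on the out-vacuum relation already available for $h\in\rSO_*(V)$, and then to reduce the whole statement to one linear-algebra identity about the vector $q_g v-T_h p_g v$.

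First I would note that $h=rg\in\rSO_*(V)$ makes $T_h\in\rSk(V)$ well defined, so the identity established inside the proof of Lemma~\ref{lm:out-vacuum-F} applies: $a_J(w)\,f_{T_h}=-\,T_h w\w f_{T_h}$ for every $w\in V$. Writing $E:=e_1\wyw e_k$ and using that $a_J(\cdot)$ is an antiderivation of degree $-1$ on $\La(V)$ [cf.\ \eqref{eq:birth-and-death}], I would expand
\[
a_J(p_g v)\,(E\w f_{T_h})=\bigl(a_J(p_g v)E\bigr)\w f_{T_h}+(-1)^k\,E\w a_J(p_g v)f_{T_h}.
\]
The first term vanishes because each $e_i\in\ker p_g^t=\ker p_g^\7$ (the $d$-transpose of the $\bC$-linear $p_g$ equals its Hilbert adjoint), so $\braket{p_g v}{e_i}=\braket{v}{p_g^\7 e_i}=0$. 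Feeding in $a_J(p_g v)f_{T_h}=-T_h p_g v\w f_{T_h}$, adding $a_J^\7(q_g v)\Phi=q_g v\w E\w f_{T_h}$, and commuting the degree-one factor $T_h p_g v$ back past $E$, I expect the expression to collapse to
\[
\bigl(a_J^\7(q_g v)+a_J(p_g v)\bigr)\Phi=w_v\w E\w f_{T_h},\qquad w_v:=q_g v-T_h p_g v .
\]

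It then suffices to prove $w_v\in K:=\ker p_g^t=\linspan_\bC\{e_1,\dots,e_k\}$, since a vector of $K$ wedged with $E$ is zero. I would check this on $V=\ker p_g\oplus(\ker p_g)^\perp$. On $\ker p_g$ it is immediate: there $p_g v=0$, so $w_v=q_g v\in q_g(\ker p_g)\subseteq\ker p_g^t=K$. On $(\ker p_g)^\perp$ I would first use the relation $p_g q_g^t=-q_g p_g^t$ from \eqref{eq:p-q-relations}: it gives $q_g^t e_i\in\ker p_g$ for each $i$, and taking $d$-transposes together with the $J$-invariance of $\ker p_g$ shows $q_g\bigl((\ker p_g)^\perp\bigr)\subseteq K^\perp$. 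Hence for $v\in(\ker p_g)^\perp$ both $p_g v\in\im p_g=K^\perp$ and $q_g v\in K^\perp$, so $gv\in K^\perp$. Since $r=r_1\cdots r_k$ acts as the scalar $(-1)^k$ on $K^\perp$ (each $r_i$ is $-1$ off the complex line $\linspan_\bC\{e_i\}$), we get $hv=rgv=(-1)^k gv$, that is $p_h v=(-1)^k p_g v$ and $q_h v=(-1)^k q_g v$. Consequently $T_h p_g v=q_h p_h^{-1}p_g v=(-1)^k q_h v=q_g v$, whence $w_v=0$. In every case $w_v\w E=0$, which yields \eqref{eq:defect-vacuum}.

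The antiderivation bookkeeping and the sign tracking are routine; the one genuinely load-bearing point is the behaviour of the reflection $r$ on the linear and antilinear parts of $g$ over $K^\perp$, i.e.\ the identities $p_h v=(-1)^k p_g v$ and $q_h v=(-1)^k q_g v$ on $(\ker p_g)^\perp$. These rest on the inclusion $q_g\bigl((\ker p_g)^\perp\bigr)\subseteq K^\perp$, extracted from \eqref{eq:p-q-relations}; once it is in place, the collapse $w_v=0$ off $\ker p_g$ and $w_v\in K$ on $\ker p_g$ is automatic, and the proposition follows.
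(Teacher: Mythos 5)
Your argument is correct and is essentially the paper's own proof, reorganized: the same antiderivation expansion, the same appeal to Lemma~\ref{lm:out-vacuum-F} (used in the equivalent form $a_J(w)\,f_{T_h} = -T_h w \w f_{T_h}$), and the same case split $V = \ker p_g \oplus (\ker p_g)^\perp$ drawing on \eqref{eq:p-q-relations}, repackaged as the single inclusion $q_g v - T_h p_g v \in \ker p_g^t$. Your tracking of the sign in $p_h v = (-1)^k p_g v$ on $(\ker p_g)^\perp$ is in fact slightly more precise than the paper's statement $p_h v = p_g v$ (which tacitly treats $r$ as acting by $-1$ rather than $(-1)^k$ on $\im p_g$), though the discrepancy is immaterial since the quantity being shown to vanish is $\bR$-homogeneous in that argument.
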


\begin{proof}
When $g \in \rSO_*(V)$, so that $k = 0$, the relation
\eqref{eq:defect-vacuum} reduces to \eqref{eq:out-vacuum-F-bis},
already established. For the general case with $k > 0$, we recall the
argument of \cite[Sect.~6.3]{Polaris}. We abbreviate
$\ze := e_1 \wyw e_k \in \La^k(\ker p_g^t)$.
From~\eqref{eq:birth-and-death} it follows that
\begin{align*}
a_J^\7(q_g v)[\ze \w f_{T_h}] &= a_J^\7(q_g v)[\ze] \w f_{T_h}
= (-1)^k \ze \w a_J^\7(q_g v)[f_{T_h}],
\\
a_J(p_g v)[\ze \w f_{T_h}]
&= a_J(p_g v)[\ze] \w f_{T_h} + (-1)^k \ze \w a_J(p_g v)[f_{T_h}],
\end{align*}
since $a_J(p_g v)$ is an antiderivation. But $a_J(p_g v)[\ze] = 0$
since $p_g v \in \im p_g = (\ker p_g^t)^\perp$.

It is enough to consider the two cases $v \in \ker p_g$ and
$v \in (\ker p_g)^\perp$. When $v \in \ker p_g$, so that
$q_g v \in \ker p_g^t$, then $a_J(p_g v) = 0$ and
$a_J^\7(q_g v)[\ze] = 0$, and thus \eqref{eq:defect-vacuum} holds.

On the other hand, if $v \in (\ker p_g)^\perp$, then 
$p_h v = p_g v$ and $q_h v = q_g v$, so the left-hand side of
\eqref{eq:defect-vacuum} reduces to $(-1)^k \ze \w \eta$, where
$\eta := \bigl( a_J^\7(q_h v) + a_J(p_h v) \bigr)[f_{T_h}]$.
But in that case, \eqref{eq:out-vacuum-F-bis} shows that $\eta = 0$,
since $h \in \rSO_*(V)$. Therefore, \eqref{eq:defect-vacuum} holds
generally.
\end{proof}

Use of the negative-reflections $r_i$ allows one to extend $\mu$ to 
the full orthogonal group $\rO(V)$. The details are given in  
Appendix~\ref{app:pin-repn}.

\marker
The fermionic Fock space $\sF(V)$, like its bosonic counterpart,
is $\bZ_2$-graded, by taking $\sF(V) = \sF_0(V) \oplus \sF_1(V)$ where
$\sF_0(V) := \bigoplus_{k\,\even} V^{\w k}$, a subspace of $\La(V)$ of
dimension~$2^{m-1}$. It is clear from the expansion
\eqref{eq:Pfaffian-expansion} that all Gaussians $f_T$ lie
in~$\sF_0(V)$ and indeed span that subspace. The multivectors
$e_1 \wyw e_k \w f_{T_h}$ lie in $\sF_0(V)$ or~$\sF_1(V)$ according
as $k$~is even or~odd.

Analogously to~\eqref{eq:push-the-vacuum-B}, it follows that
\begin{equation}
\mu(g)\,\Om = c_g\, e_1 \wyw e_k \w f_{T_h}
\label{eq:push-the-vacuum-F} 
\end{equation}
where now 
$$
\bigl\| e_1 \wyw e_k \w f_{T_h} \bigr\| = \|f_{T_h}\|
= \det^{+1/4}(1 - T_h^2),
$$
and the scalar prefactor $c_g \in \bC$ must now satisfy
$|c_g| = \det^{-1/4}(1 - T_h^2)$.

We may again fix the phase factor by imposing $c_g > 0$, taking
$c_g := \det^{-1/4}(1 - T_h^2)$ for all $g \in \rSO(V)$. Note also 
that 
$$ 
\det^{-1/4}(1 - T_h^2) = \det^{+1/4}(p_h p_h^t)
= \det^{+1/4}(p_h^t p_h) 
= \det^{+1/4}\bigl( p_g^t p_g|_{(\ker p_g)^\perp} \bigr)
$$
whenever $g \in \rSO(V)$, where $k$~is even; this expression is 
independent of the choice of~$r$.

For simplicity, and for a clearer comparison with the metaplectic
case, we focus on the top stratum where $k = 0$, wherein
\begin{equation}
c_g := \det^{-1/4}(1 - T_g^2)  \word{for all}  g \in \rSO_*(V).
\label{eq:prefactor-F} 
\end{equation}

\marker
At this stage, one can define operators $\mu(g)$ on~$\sF_0(V)$, for
$g \in \rSO_*(V)$, by analogy with \eqref{eq:act-on-Gaussians-B}.
Since $p_g$ is invertible, $p_g + q_gS$ is also invertible for $S$ in
a neighbourhood of~$0$ in $\rSk(V)$, so that $g\.S$ exists, by
\eqref{eq:group-action-on-T}. The Gaussians $f_S$ for which
$g\.S$ exists span the vector space $\sF_0(V)$, and we can define
an operator $\mu(g)$ on~$\sF_0(V)$ by
\begin{equation}
\mu(g) f_S := c_g \vf_g(S) f_{g\.S} 
\label{eq:act-on-gaussians-F} 
\end{equation}
where $\vf_g(S) \in \bC$ is a factor to be determined; it must be 
chosen so that $\mu(g)$ is unitary.

\begin{lema} 
\label{lm:scalar-factor-F}
For $g \in \rSO_*(V)$, taking $c_g$ as in~\eqref{eq:prefactor-F}, the 
prescription \eqref{eq:act-on-gaussians-F} will define a unitary 
operator on $\sF_0(V)$ provided that
\begin{equation}
\vf_g(S) := \det^{+1/2}(1 - S\That_g).
\label{eq:scalar-factor-F} 
\end{equation}
\end{lema}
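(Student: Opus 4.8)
The plan is to show that the prescription \eqref{eq:act-on-gaussians-F} preserves all inner products among the Gaussians $f_S$ (with $S$ near~$0$ in $\rSk(V)$, so that $g\.S$ is defined and skew). Since the images $f_{g\.S}$ range over a neighbourhood of $T_g$ and the Gaussians span $\sF_0(V)$ analytically, isometry on this spanning family forces $\mu(g)$ to be well defined (a null combination $\sum_i\la_i f_{S_i}=0$ maps to a vector of norm zero) and surjective, hence unitary. Because $c_g>0$ is real, expanding $\braket{\mu(g)f_S}{\mu(g)f_T}$ with the pairing \eqref{eq:Gaussian-pairing-F}, and noting $\vf_g(S)=\braket{f_{\That_g}}{f_S}=\det^{+1/2}(1-S\That_g)$ so that $\ovl{\vf_g(S)}=\det^{+1/2}(1-\That_g S)$, the required isometry becomes the scalar identity
\begin{equation}
c_g^2\,\det^{+1/2}(1-\That_g S)\,\det^{+1/2}(1-T\That_g)\,
\det^{+1/2}\bigl(1-(g\.T)(g\.S)\bigr)=\det^{+1/2}(1-TS).
\label{eq:plan-isometry}
\end{equation}

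The heart of the matter is a determinant factorization for $1-(g\.T)(g\.S)$. Writing the group action \eqref{eq:group-action-on-T} as $g\.S=B_SA_S^{-1}$ with $A_S:=p_g+q_gS$ and $B_S:=q_g+p_gS$ (and likewise $A_T,B_T$), the key operator identity I would establish is
\[
1-(g\.T)(g\.S)=A_T^{-t}\,(1-TS)\,A_S^{-1}.
\]
Two ingredients drive this. First, the skewsymmetry of $g\.T\in\rSk(V)$ (valid for $T$ near~$0$, as noted after \eqref{eq:defect-operator}) reads $(B_TA_T^{-1})^t=-B_TA_T^{-1}$, i.e.\ $A_T^tB_TA_T^{-1}=-B_T^t$; this is precisely what removes the stray $A_T^{-1}$ after multiplying the claimed identity on the left by $A_T^t$ and on the right by $A_S$. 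Second, the $(p_g,q_g)$-relations \eqref{eq:p-q-relations} in Case~F collapse what remains:
\begin{align*}
A_T^t A_S+B_T^t B_S
&=(p_g^tp_g+q_g^tq_g)+(p_g^tq_g+q_g^tp_g)S \\
&\quad -T(q_g^tp_g+p_g^tq_g)-T(q_g^tq_g+p_g^tp_g)S=1-TS,
\end{align*}
the cross terms cancelling because $p_g^tq_g=-q_g^tp_g$, while $p_g^tp_g+q_g^tq_g=1$.

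Taking determinants yields \eqref{eq:plan-isometry} in squared form. Lemma~\ref{lm:T-hat} gives $q_g=-p_g\That_g$, hence $A_S=p_g(1-\That_g S)$ and $\det A_S=\det p_g\,\det(1-\That_g S)$; since the real transpose of a $\bC$-linear operator coincides with its Hermitian adjoint (so $\det X^t=\ovl{\det X}$) and $\ovl{\det(1-\That_g T)}=\det(1-T\That_g)$, one gets $\det A_T^{-t}=\bigl(\ovl{\det p_g}\,\det(1-T\That_g)\bigr)^{-1}$. As $\det p_g\,\ovl{\det p_g}=\det(p_gp_g^t)=\det^{-1}(1-T_g^2)$ by \eqref{eq:fancy-formula}, the factors $\det(1-\That_gS)$ and $\det(1-T\That_g)$ reproduce $\ovl{\vf_g(S)}\,\vf_g(T)$ and the surviving power of $\det(1-T_g^2)$ matches $c_g^2=\det^{-1/2}(1-T_g^2)$ from \eqref{eq:prefactor-F}, confirming the square of \eqref{eq:plan-isometry}.

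It remains to fix the square-root branches, the delicate point. Each $\det^{+1/2}$ in \eqref{eq:plan-isometry} is the canonical branch supplied by the Pfaffian sum of \eqref{eq:Gaussian-pairing-F} (indeed each is a genuine Gaussian inner product), so both sides are holomorphic in $T$ and antiholomorphic in $S$ on a connected neighbourhood of~$0$ in $\rSk(V)\x\rSk(V)$ — the entries of $g\.T$ being holomorphic in those of $T$, as a Neumann expansion of $(1-\That_gT)^{-1}$ shows. Their squares agree by the preceding step, and at $S=T=0$ both sides equal~$1$ (here $g\.0=T_g$, $\vf_g(0)=1$, and $c_g^2\|f_{T_g}\|^2=1$). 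A holomorphic–antiholomorphic function with prescribed square and a single fixed value is thereby determined, so \eqref{eq:plan-isometry} holds near~$0$ and, by analytic continuation, wherever $g\.S$ and $g\.T$ are defined. The main obstacle is the factorization of the two middle paragraphs — coaxing the skewsymmetry relation $A_T^tB_TA_T^{-1}=-B_T^t$ and the $(p_g,q_g)$-identities to conspire — together with the disciplined bookkeeping of the $\det^{\pm1/2}$ branches.
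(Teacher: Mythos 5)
Your proof is correct, and it follows the same overall strategy as the paper -- verify that $\braket{\mu(g)f_S}{\mu(g)f_T}=\braket{f_S}{f_T}$ by combining the pairing formula \eqref{eq:Gaussian-pairing-F} with an operator factorization of $1-(g\.T)(g\.S)$ -- but you reach the central identity by a genuinely different route. The paper expands $p_g^t\,(1-(g\.T)(g\.S))\,p_g$ brute-force from the additive form of the action \eqref{eq:group-action-on-T} and collapses it to $(1-T\That_g)^{-1}(1-TS)(1-\That_g S)^{-1}$ after several lines of algebra. You instead work with the fractional-linear form $g\.S=B_SA_S^{-1}$, use the skewsymmetry of $g\.T$ to convert $A_T^tB_TA_T^{-1}$ into $-B_T^t$, and reduce everything to the one-line computation $A_T^tA_S+B_T^tB_S=1-TS$ driven by the Case~F relations \eqref{eq:p-q-relations}; since $A_S=p_g(1-\That_g S)$ and $A_T^{-t}=p_g^{-t}(1-T\That_g)^{-1}$, your identity $1-(g\.T)(g\.S)=A_T^{-t}(1-TS)A_S^{-1}$ is exactly the paper's, obtained more conceptually. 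Your version also buys two things the paper leaves implicit: an explicit argument that isometry on the spanning family of Gaussians makes $\mu(g)$ well defined and surjective, and a disciplined resolution of the $\det^{1/2}$ branches via the identification $\vf_g(S)=\braket{f_{\That_g}}{f_S}$ together with holomorphy in $T$, antiholomorphy in $S$, and normalization at $S=T=0$. The paper handles the branches only through its standing convention for $\det^{1/2}$, so your treatment is the more careful of the two on that point.
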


\begin{proof}
Applying \eqref{eq:act-on-gaussians-F} to any $S,T \in \rSk(V)$ such
that $g\.S$ and $g\.T$ are defined, \eqref{eq:Gaussian-pairing-F}
yields
\begin{align*}
\braket{\mu(g)\,f_S}{\mu(g)\,f_T} 
&= \braket{c_g \phi_g(S) f_{g\.S}}{c_g \phi_g(T) f_{g\.T}}
\\
&= c_g^2 \ovl{\phi_g(S)} \phi_g(T)\, \braket{f_{g\.S}}{f_{g\.T}}
\\
&= \ovl{\phi_g(S)} \phi_g(T)
\det^{-1/2}(1 - T_g^2) \det^{1/2}(1 - (g\.T)(g\.S)).
\end{align*}
To simplify the last two factors on the right-hand side, one may use
\eqref{eq:group-action-on-T}, recalling from~\eqref{eq:fancy-formula}
that $1 - T_g^2 = (p_g p_g^t)^{-1}$ and also
$1 - \That_g^2 = (p_g^t p_g)^{-1}$, to get
\begin{align*}
\MoveEqLeft{p_g^t (1 - (g\.T)(g\.S)) p_g}
\\
&= p_g^t (1 - T_g^2) p_g - T (1 - \That_g T)^{-1} p_g^{-1} T_g p_g
- p_g^t T_g p_g^{-t} S (1 - \That_g S)^{-1}
\\
&\qquad 
- T (1 - \That_g T)^{-1} p_g^{-1} p_g^{-t} S (1 - \That_g S)^{-1}
\\
&= 1 + (1 - T \That_g)^{-1} T \That_g + \That_g S(1 - \That_g S)^{-1}
- (1 - T \That_g)^{-1} T (1 - \That_g^2) S (1 - \That_gS)^{-1}
\\
&= (1 - T \That_g)^{-1} \bigl\{ (1 - T \That_g)(1 - \That_gS)
+ T \That_g (1 - \That_gS) 
\\
&\hspace*{7em} + (1 - T \That_g) \That_g S
- T (1 - \That_g^2) S \bigr\} (1 - \That_gS)^{-1}
\\
&= (1 - T \That_g)^{-1} (1 - TS) (1 - \That_gS)^{-1}.
\end{align*}
Therefore,
\begin{align*}
\braket{\mu(g)\,f_S}{\mu(g)\,f_T} 
&= \ovl{\phi_g(S)} \det^{-1/2}(1 - \That_g S)\,
\phi_g(T) \det^{-1/2}(1 - T \That_g)\, \det^{1/2}(1 - TS)
\\
&= \det^{1/2}(1 - TS) = \braket{f_S}{f_T},
\end{align*}
provided $\phi_g(S)$ and~$\phi_g(T)$ are given
by~\eqref{eq:scalar-factor-F}, thereby cancelling the $\det^{-1/2}$
factors. Since the Gaussians $f_S$, for $S$ in any neighbourhood
of~$0$, form a total set in $\sF_0(V)$, it follows that each operator
$\mu(g)$ is unitary on the even Fock subspace.
\end{proof}

\begin{lema} 
\label{lm:two-scalar-factors}
Assume that $g,h,gh \in \rSO'_*(V)$. Take $S \in \rSk(V)$ such that
$h\.S$ and $gh\.S$ are defined. Then
\begin{equation}
\vf_{gh}(S) = \det^{-1/2}(1 - T_h\That_g)\, \vf_h(S)\, \vf_g(h\.S).
\label{eq:two-scalar-factors} 
\end{equation}
\end{lema}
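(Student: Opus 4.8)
The plan is to reduce the claimed identity to a single \emph{operator} factorization and then take determinants. Recall from \eqref{eq:scalar-factor-F} that $\vf_g(S) = \det^{+1/2}(1 - S\That_g)$, and that in the orthogonal case $\That_g = T_{g^{-1}} = q_g^t p_g^{-t}$ (using $p_{g^{-1}} = p_g^t$, $q_{g^{-1}} = q_g^t$ from the proof of Lemma~\ref{lm:T-hat}). The useful device is the $\bC$-linear ``denominator'' operator $N_g(S) := p_g^t - S q_g^t$, for which
\[
1 - S\That_g = (p_g^t - S q_g^t)\, p_g^{-t} = N_g(S)\, p_g^{-t},
\word{so that} \vf_g(S)^2 = \det(1 - S\That_g) = \det N_g(S)\,/\det p_g .
\]
By hypothesis $p_g$, $p_h$, $p_{gh}$ are invertible and $h\.S$, $gh\.S$ are defined; in particular $N_h(S)$ is invertible, since $\det N_h(S) = \det(1-S\That_h)\det p_h$ is nonzero exactly when $h\.S$ exists. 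The whole lemma will then follow from the factorization $N_{gh}(S) = N_h(S)\,N_g(h\.S)$, which is nothing but the multiplicativity of this denominator — the fermionic analogue of a fractional-linear cocycle.

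The key step is this factorization. Starting from $p_{gh} = (p_g + q_g T_h)p_h$ and $q_{gh} = (q_g + p_g T_h)p_h$, read off from the proof of Lemma~\ref{lm:compositions}, and using $T_h^t = -T_h$ together with $q_h^t = -p_h^t T_h$, transposition yields $p_{gh}^t = p_h^t(p_g^t - T_h q_g^t)$ and $q_{gh}^t = p_h^t(q_g^t - T_h p_g^t)$. Substituting into $N_{gh}(S) = p_{gh}^t - S q_{gh}^t$ and collecting the $p_g^t$- and $q_g^t$-terms gives
\[
N_{gh}(S) = N_h(S)\, p_g^t - (S p_h^t - q_h^t)\, q_g^t,
\]
because the coefficient of $p_g^t$ is exactly $p_h^t + S p_h^t T_h = N_h(S)$. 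Factoring out $N_h(S)$, it remains to verify that $N_h(S)^{-1}(S p_h^t - q_h^t) = h\.S$; writing $h\.S = (q_h + p_h S)(p_h + q_h S)^{-1}$ from \eqref{eq:group-action-on-T}, this is equivalent to the polynomial identity
\[
(p_h^t - S q_h^t)(q_h + p_h S) = (S p_h^t - q_h^t)(p_h + q_h S).
\]
Expanding both sides and simplifying with the orthogonal relations $p_h^t p_h + q_h^t q_h = 1$ and $p_h^t q_h = -q_h^t p_h$ from \eqref{eq:p-q-relations}, all terms cancel (the two single-$S$ terms cancel against each other, and the quadratic terms vanish by skewness of $p_h^t q_h$). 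Hence the bracket equals $p_g^t - (h\.S)q_g^t = N_g(h\.S)$, proving the factorization.

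Taking determinants gives $\det N_{gh}(S) = \det N_h(S)\,\det N_g(h\.S)$. Dividing by $\det p_{gh} = \det p_g\,\det(1 - \That_g T_h)\,\det p_h$, which comes from \eqref{eq:p-compositions}, and cancelling the $\det p_g$ and $\det p_h$ factors produces, after replacing $\det(1 - \That_g T_h)$ by $\det(1 - T_h\That_g)$,
\[
\det(1 - S\That_{gh})
= \frac{\det(1 - S\That_h)\,\det(1 - (h\.S)\That_g)}{\det(1 - T_h\That_g)}.
\]
This is precisely the square of the asserted relation $\vf_{gh}(S) = \det^{-1/2}(1 - T_h\That_g)\,\vf_h(S)\,\vf_g(h\.S)$.

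The main obstacle is then the choice of branch: passing from equality of squares to the signed identity, rather than its negative. I would fix the sign by continuity. The ratio of the two sides is a continuous $\{\pm1\}$-valued function of $S$, and at $S = 0$ one has $h\.0 = T_h$, so $\vf_h(0) = \vf_{gh}(0) = 1$ while $\vf_g(h\.0) = \vf_g(T_h) = \det^{1/2}(1 - T_h\That_g)$; since $\det^{-1/2}$ is the reciprocal branch of $\det^{1/2}$, both sides reduce to $1$ there, so the sign is $+1$ near the base point. Because each $\vf$ is a specific branch of $\det^{1/2}(1 - XY)$ with $X,Y \in \rSk(V)$, defined (as in \eqref{eq:Gaussian-pairing-F}) by analytic continuation of a holomorphic function, both sides are restrictions of holomorphic functions of $S$; the identity therefore propagates from $S = 0$ to the whole connected domain (in the complexified parameter) on which $h\.S$ and $gh\.S$ are defined, giving the signed equality throughout.
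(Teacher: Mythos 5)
Your argument is correct in substance and takes a genuinely different route from the paper's. The paper proves \eqref{eq:two-scalar-factors} by a direct chain of determinant manipulations: it substitutes the composition formula for $\That_{gh}$ (Lemma~\ref{lm:compositions} applied to $h^{-1}g^{-1}$) into $\det^{1/2}(1 - S\That_{gh})$ and peels off the factors $\det^{-1/2}(1 - T_h\That_g)$, $\vf_h(S)$ and $\vf_g(h\.S)$ one at a time. You instead isolate the single operator identity $N_{gh}(S) = N_h(S)\,N_g(h\.S)$ for the ``denominator'' $N_g(S) = p_g^t - Sq_g^t = (p_g + q_gS)^t$, exhibiting \eqref{eq:two-scalar-factors} as the determinant of a genuine cocycle for the fractional-linear action \eqref{eq:group-action-on-T}. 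This is arguably cleaner: it makes the non-vanishing of $\vf_h(S)$ and $\vf_g(h\.S)$ on the stated domain transparent (invertibility of $N$ is exactly definedness of the dot-action), and, unlike the paper, you confront the branch of the square root explicitly. Your sign argument does work: the ratio of the two sides is a continuous $\{\pm 1\}$-valued function equal to $+1$ at $S = 0$, and the domain $\set{S : h\.S,\ gh\.S \text{ defined}}$ is connected because $\rSk(V)$ is a \emph{complex} vector space and this domain is the complement of the zero set of the holomorphic polynomial $S \mapsto \det N_h(S)\,\det N_{gh}(S)$.

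One piece of bookkeeping needs repair. Since $1 - S\That_g = N_g(S)\,p_g^{-t}$, the correct normalization is $\det(1 - S\That_g) = \det N_g(S)/\det p_g^t$, and for a $\bC$-linear operator the $d$-transpose coincides with the Hermitian adjoint, so $\det p_g^t = \ovl{\det p_g} \neq \det p_g$ in general. For the same reason $\det(1 - \That_g T_h) = \ovl{\det(1 - T_h\That_g)}$, so your ``replacement'' of one by the other is not an identity. The clean route is to transpose \eqref{eq:p-compositions} directly: $p_{gh}^t = p_h^t(1 - T_h\That_g)p_g^t$ by skewsymmetry of $T_h$ and $\That_g$, and dividing $\det N_{gh}(S) = \det N_h(S)\,\det N_g(h\.S)$ by $\det p_{gh}^t$ then gives the squared identity with no conjugations to wave away. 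As written, your two slips (using $\det p$ where $\det p^t$ belongs, and swapping $1 - \That_g T_h$ for $1 - T_h\That_g$) are off by mutually cancelling phases, so the final formula is right, but the intermediate statements should be corrected.
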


\begin{proof}
This is a straightforward calculation, using
$\That_{gh} = T_{h^{-1}g^{-1}}$, $p_{h^{-1}} = p_h^t$,
and~\eqref{eq:group-action-on-T}:
\begin{align*}
\det^{1/2}(1 - S\That_{gh})
&= \det^{1/2}(1 - S(\That_h + p_h^{-1}
\That_g (1 - T_h\That_g)^{-1} p_h^{-t}))
\\
&= \det^{1/2}(p_h^{-t}(1 - S\That_h)p_h^t - p_h^{-t} S p_h^{-1}
\That_g (1 - T_h\That_g)^{-1}))
\\
&= \det^{-1/2}(1 - T_h \That_g)\,
\det^{1/2}((1 - S\That_h) p_h^t (1 - T_h\That_g) p_h^{-t}
- S p_h^{-1} \That_g p_h^{-t})
\\
&= \det^{-1/2}(1 - T_h \That_g)\, \phi_h(S)\,
\det^{1/2}(1 - T_h\That_g - p_h^{-t}
(1 - S\That_h)^{-1} S p_h^{-1} \That_g)
\\
&= \det^{-1/2}(1 - T_h\That_g)\,\vf_h(S)\, \vf_g(h\.S).
\tag*{\qed}
\end{align*}
\hideqed
\end{proof}

It follows immediately that
\begin{subequations}
\label{eq:cocycle-F} 
\begin{equation}
\mu(g) \mu(h) = c(g,h) \mu(gh),
\label{eq:cocycle-F-defn} 
\end{equation}
where $c(g,h) \in \rU(1)$ is now given by
\begin{align}
c(g,h) &:= c_g c_h c_{gh}^{-1} \det^{1/2}(1 - T_h\That_g)
\nonumber \\
&= \exp\bigl[ i\Arg\det^{+1/2}(1 - T_h\That_g) \bigr].
\label{eq:cocycle-F-formula} 
\end{align}
\end{subequations}
This follows by applying both sides of~\eqref{eq:cocycle-F-defn} to 
any Gaussian $f_S$ under the hypotheses of the previous Lemma. The 
second expression in~\eqref{eq:cocycle-F-formula} comes from
\eqref{eq:explicit-cocycle-B} with the powers of the determinants 
reversed.

\begin{corl} 
\label{cr:cocycle-relation}
Assume that $g,h,k,gh,hk \in \rSO'_*(V)$. Then the following 
\emph{cocycle relation} is valid:
\begin{equation}
c(g,h)\,c(gh,k) = c(g,hk)\,c(h,k).
\label{eq:cocycle-relation} 
\end{equation}
\end{corl}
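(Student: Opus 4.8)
The plan is to obtain the cocycle relation as a formal consequence of the associativity of operator composition, given the already-established defining relation \eqref{eq:cocycle-F-defn}. First I would note that, by Lemma~\ref{lm:scalar-factor-F}, each $\mu(g)$ with $g \in \rSO_*(V)$ is a genuine unitary operator on the finite-dimensional Hilbert space $\sF_0(V)$, not merely a local prescription. Under the present hypotheses the products $gh$, $hk$ and the triple product $ghk$ also lie in $\rSO_*(V)$, so that $\mu(gh)$, $\mu(hk)$ and $\mu(ghk)$ are likewise well-defined unitaries, and the scalar identity \eqref{eq:cocycle-F-defn} applies to each of the four pairs $(g,h)$, $(gh,k)$, $(h,k)$ and $(g,hk)$.

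Next I would evaluate the triple product $\mu(g)\,\mu(h)\,\mu(k)$ in the two ways afforded by associativity. Grouping the first two factors gives
\begin{equation*}
\bigl( \mu(g)\,\mu(h) \bigr)\,\mu(k)
= c(g,h)\,\mu(gh)\,\mu(k)
= c(g,h)\,c(gh,k)\,\mu(ghk),
\end{equation*}
while grouping the last two gives
\begin{equation*}
\mu(g)\,\bigl( \mu(h)\,\mu(k) \bigr)
= c(h,k)\,\mu(g)\,\mu(hk)
= c(g,hk)\,c(h,k)\,\mu(ghk).
\end{equation*}
Since multiplication of operators on $\sF_0(V)$ is associative, these two expressions coincide, so that $\bigl( c(g,h)\,c(gh,k) - c(g,hk)\,c(h,k) \bigr)\,\mu(ghk) = 0$. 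As $\mu(ghk)$ is unitary, hence invertible and in particular nonzero, I may cancel it to obtain exactly~\eqref{eq:cocycle-relation}.

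The argument is almost entirely formal, so the only delicate point—and the main obstacle—is the domain bookkeeping: one must verify that every group element occurring, and especially the triple product $ghk$, lands in the subset of $\rSO_*(V)$ on which $\mu$ is defined, so that each invocation of \eqref{eq:cocycle-F-defn} and each operator above is legitimate. This is precisely what the hypothesis $g,h,k,gh,hk \in \rSO'_*(V)$ is designed to secure. Should one prefer to avoid speaking of globally defined operators, the same identity follows by applying all three maps to a single Gaussian $f_S$ with $S$ in a small enough neighbourhood of~$0$ that $k \. S$, $hk \. S$ and $ghk \. S$ are all defined, and then comparing the resulting scalar prefactors by means of Lemma~\ref{lm:two-scalar-factors}; since $\mu(ghk)\,f_S$ is a nonzero multiple of a Gaussian, the cancellation step is equally valid there.
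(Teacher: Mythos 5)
Your argument is correct, and it reaches the conclusion by a genuinely different (and more conceptual) route than the paper. The paper works entirely at the scalar level: it rewrites $c(g,h) = c_g c_h c_{gh}^{-1}\,\vf_g(T_h)$, cancels the common factor $c_g c_h c_k c_{ghk}^{-1}$ from both sides, and reduces \eqref{eq:cocycle-relation} to the identity $\vf_g(T_h)\,\vf_{gh}(T_k) = \vf_g(T_{hk})\,\vf_h(T_k)$, which it then extracts from Lemma~\ref{lm:two-scalar-factors} evaluated at $S = T_k$ together with the explicit formula $\vf_g(T_h) = \det^{+1/2}(1 - T_h\That_g)$. That computation is, in effect, your associativity argument applied to the single vector $\Om = f_0$ and read off on the coefficient of $f_{T_{ghk}}$; your operator version packages it more cleanly and makes transparent \emph{why} the identity must hold: any factor system of a projective representation by genuine operators on a fixed space is automatically a $2$-cocycle. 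Your fallback of applying everything to one Gaussian $f_S$ and comparing prefactors via Lemma~\ref{lm:two-scalar-factors} is essentially the paper's proof with $S = 0$ replaced by a general small~$S$. Both approaches transfer verbatim to the metaplectic case: the paper flips the signs of the determinant powers, while your argument applies directly since Proposition~\ref{pr:nu-composed} gives $\nu(g)\nu(h) = c(g,h)\,\nu(gh)$ as an identity of honest unitaries with no domain restrictions.

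One caveat. You rightly single out the domain bookkeeping as the delicate point, but your assertion that the stated hypothesis ``is designed to secure'' it is not accurate: $g,h,k,gh,hk \in \rSO_*(V)$ does \emph{not} imply $ghk \in \rSO_*(V)$, since $\rSO_*(V)$ is open and dense but is not a subgroup; and your invocations of \eqref{eq:cocycle-F-defn} for the pairs $(gh,k)$ and $(g,hk)$ each require $ghk \in \rSO_*(V)$. The same lacuna occurs in the paper's own proof, which needs $T_{ghk}$ and $c_{ghk}$ (hence $ghk \in \rSO_*(V)$) in order to apply Lemma~\ref{lm:two-scalar-factors} at $S = T_k$; so this is really a defect of the corollary's hypotheses rather than of your argument specifically. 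To be airtight you should either add $ghk \in \rSO_*(V)$ to the hypotheses, or prove the identity on that larger set and then remove the extra condition by continuity, observing that all four factors in \eqref{eq:cocycle-relation} are defined and continuous under the stated hypotheses alone.
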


\begin{proof}
On rewriting \eqref{eq:cocycle-F-formula} as
$c(g,h) = c_g c_h c_{gh}^{-1} \vf_g(T_h)$, the desired relation
reduces to 
$$
c_g c_h c_k c_{ghk}^{-1} \vf_g(T_h) \vf_{gh}(T_k)
\overset{?}{=} c_g c_h c_k c_{ghk}^{-1} \vf_g(T_{hk}) \vf_h(T_k).
$$
On recalling that $T_{hk} = h\.T_k$, it follows from 
\eqref{eq:two-scalar-factors} that
$$
\vf_g(T_h) \vf_{gh}(T_k)
= \det^{-1/2}(1 - T_h\That_g)\, \vf_g(T_h) \vf_h(T_k)\, \vf_g(T_{hk})
$$
and, from~\eqref{eq:scalar-factor-F}, the formula
$\vf_g(T_h) = \det^{+1/2}(1 - T_h\That_g)$ establishes the 
cocycle relation.
\end{proof}

Now, the calculations in the proofs of Lemma~\ref{lm:two-scalar-factors}
and Corollary~\ref{cr:cocycle-relation} hold also for the metaplectic 
group, on reversing the powers of the determinants and taking 
$S \in \sD(V)$; $g,h,k \in \rSp(V)$, without restriction. Therefore, 
the cocycle relation \eqref{eq:cocycle-relation} holds also for the 
cocycles \eqref{eq:explicit-cocycle-B} of the metaplectic 
representation.

\marker
It remains to extend $\mu(g)$ to the full Fock space $\sF(V)$ and to
verify the intertwining relation \eqref{eq:full-entwine-fer} there.
The key is to identify the field operator $\psi(v)$ with the image
under~$\mu$ of a certain orthogonal transformation~\cite{Rhea}. For
$v \in V$ with $d(v,v) = 1$, define $r_v \in \rO(V)$ by
\begin{equation}
r_v(w) := 2d(v,w) v - w.
\label{eq:neg-reflection} 
\end{equation}
Here $-r_v$ is the reflection in the real hyperplane orthogonal
to~$v$. In particular, $r_v^2 = 1$. Notice also that, if
$g \in \rO(V)$, then
$$
g r_v g^{-1} : w \mapsto 2d(v, g^{-1}w) gv - w
= 2d(gv, w) gv - w = r_{gv}(w),
$$
so that $r_{gv} = g r_v g^{-1}$. We now define, simply:
\begin{equation}
\mu(r_v) := \psi(v).
\label{eq:mu-extended} 
\end{equation}

By this means, the domain of $\mu$ has been extended to include some 
improper orthogonal transformations (with $k = 1$). We can also put
$$
\mu(g r_v) := \mu(g) \mu(r_v)  \word{for all}  g \in \rSO'_*(V),
$$
and take $c(g,r_v) := 1$. Checking that these partial definitions are
well defined and that the cocycle
relations~\eqref{eq:cocycle-relation} continue to hold requires
requires some bookkeeping, which we defer to
Appendix~\ref{app:pin-repn}. There we find that 
$$
\mu(g) \mu(r_v) = \mu(gr_v) \mu(g)
\word{for} g \in \rSO_*(V), \ v \in V.
$$
To define the action of $\mu(g)$ on $\sF_1(V)$, it helps to note that
each operator $\psi(v)$, with $v \neq 0$, exchanges the subspaces
$\sF_0(V)$ and $\sF_1(V)$, since $\psi(v)^{-1} = d(v,v)^{-1} \psi(v)$.
Therefore, we can now define $\mu(g)$ on~$\sF_1(V)$ by setting
\begin{equation}
\mu(g) [\psi(v)\,f_S] := \psi(gv)\, \mu(g) f_S
\label{eq:entwine-two} 
\end{equation}
which is consistent with \eqref{eq:full-entwine-fer}. The intertwining
$\mu(g) \psi(v) = \psi(gv) \mu(g)$ holds also on~$\sF_1(V)$: see
Proposition~\ref{pr:entwine-three} below.

\goodbreak 

The relation \eqref{eq:entwine-two} shows also that each $\mu(g)$ 
leaves the subspace $\sF_1(V)$ invariant, and indeed acts irreducibly 
on~$\sF_1(V)$. The same is true for all $g \in \rSO(V)$, see the 
appendix for the general case. Thus the spin representation of 
$\rSO(V)$ decomposes as the direct sum of two irreducible 
subrepresentations, just like the metaplectic representation.


\appendix

\section{The pin representation of the full orthogonal group} 
\label{app:pin-repn}

To obtain the non-Gaussian out-vacua stated in
Proposition~\ref{pr:defect-vacuum}, one must examine all the strata 
of the orthogonal group. Begin with the negative-refection $r_v$
of~\eqref{eq:neg-reflection}. One checks easily that 
$d(r_v(u), r_v(w)) = d(u,w)$.

Let $V'$ be a $2$-dimensional complex subspace of~$V$ generated by
two vectors $u,v \in V$ such that $\braket{u}{u} = \braket{v}{v} = 1$
and $\braket{u}{v} \neq 0$. Put
$s := r_u r_v \in \rSO(V') \subset \rSO(V)$. The subspace $V'$ has an
orthonormal basis $\{v,v'\}$, so that $u = \al v + \bt v'$ with
$\al \neq 0$, $\al\bar\al + \bt\bar\bt = 1$.  Using
$p_s(w) = \half(s(w) - i\,s(iw))$, one may compute $s$ in the
$\{v,v'\}$ basis:
\begin{alignat*}{2}
p_s(v) &= \al(\al v + \bt v'),
& p_s(v') &= \al(-\bar\bt v + \bar\al v'),
\\
q_s(v) &= -\bt(\bar\bt v - \bar\al v'), \quad
& q_s(v') &= -\bt(\al v + \bt v').
\end{alignat*}
The linear map $p_s$ is invertible on $V'$ since $\al \neq 0$, and 
its inverse is given by $v \mapsto (\bar\al v - \bt v')/\al$,
$v' \mapsto (\bar\bt v + \al v')/\al$. From this it follows at once 
that
\begin{equation}
T_s v = (\bt/\bar\al) v',  \qquad  T_s v' = -(\bt/\bar\al) v,
\label{eq:little-T} 
\end{equation}
consistent with the skewsymmetry of~$T_s$.

On the complementary subspace $V \ominus V'$, it is clear that 
$p_{r_u} = p_{r_v} = -1$, $q_{r_u} = q_{r_v} = 0$; and so
$T_s = 0$ on that subspace. Therefore, $p_s$ is invertible on~$V$, 
and also
\begin{equation}
f_{T_s} = \Om + \braket{v'}{T_s v}\, v' \w v
= \braket{u}{v}^{-1} (\braket{u}{v}\,\Om + u \w v),
\label{eq:little-Gaussian} 
\end{equation}
with $\|\braket{u}{v}\,\Om + u \w v\| = 1$ in the norm of~$\sF(V)$. 
Thereby, since $c_s > 0$:
$$
\mu(s)\,\Om = c_s f_{T_s}
= \exp(-i\arg\braket{u}{v})\, (\braket{u}{v}\,\Om + u \w v).
$$

Now, from \eqref{eq:mu-extended} together with
\eqref{eq:fermion-splitting} and~\eqref{eq:birth-and-death}, it
follows that
\begin{equation}
\mu(r_u) \mu(r_v)\,\Om = \psi(u) \psi(v)\,\Om 
= \braket{u}{v} \Om + u \w v,
\label{eq:push-two} 
\end{equation}
and thus $\mu(r_u) \mu(r_v)\,\Om = c(r_u,r_v) \mu(r_u r_v)\,\Om$, on
introducing
$$
c(r_u,r_v) := \exp(i\arg \braket{u}{v}).
$$
These relations remain valid if $\braket{u}{v} = 0$, in which case
$\mu(s)\,\Om = u \w v$.

Note in passing that $[\mu(r_u), \mu(r_v)]_+\,\Om = 2d(u,v)\,\Om$
follows from~\eqref{eq:push-two}, validating the definition
\eqref{eq:mu-extended} of~$\mu(r_v)$.

\begin{lema} 
\label{lm:two-reflections}
If $u,v$ are two unit vectors in~$V$ and if $\Psi \in \sF_0(V)$, then
$$
\mu(r_u) \mu(r_v)\,\Psi = c(r_u,r_v) \mu(r_u r_v)\,\Psi.
$$
\end{lema}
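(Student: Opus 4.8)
The plan is to reduce everything to the Gaussian vectors. Since $s := r_u r_v$ has $p_s$ invertible (shown above), $s \in \rSO_*(V)$; set $A := \mu(r_u)\mu(r_v) = \psi(u)\psi(v)$ and $B := c(r_u,r_v)\,\mu(s)$. Both are unitary and preserve $\sF_0(V)$, since $A$ is an even product of field operators and $\mu(s)$ acts on $\sF_0(V)$ by Lemma~\ref{lm:scalar-factor-F}; and we already know $A\,\Om = B\,\Om$. Because the Gaussians $\set{f_S : S \in \rSk(V)\ \text{near}\ 0}$ form a total set in $\sF_0(V)$ and $\Om = f_0$, it suffices to prove $A f_S = B f_S$ for all such $S$.

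First I would pin down $A f_S$. From the out-vacuum relation $a_J(w) f_S = -Sw \w f_S$ (the case $T=S$ of Lemma~\ref{lm:out-vacuum-F}) together with $a_J^\7(w)\Phi = w\w\Phi$, one gets the clean formula $\psi(w) f_S = (1-S)w \w f_S$. On the operator side, $\psi(v)^2 = d(v,v) = 1$ yields the Clifford reflection identity $\psi(v)\psi(w)\psi(v) = \psi(r_v w)$, which upgrades to $A\,\psi(w)\,A^{-1} = \psi(sw)$ for all $w$; taking linear and antilinear parts in $w$ recovers exactly the Bogoliubov relations of Lemma~\ref{lm:Bogoliubov} for $g = s$. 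Conjugating by $A$ then gives $A^{-1}a_J(w)A = a_J(p_s^t w) + a_J^\7(q_s^t w)$ (using $p_{s^{-1}} = p_s^t$, $q_{s^{-1}} = q_s^t$), so that $a_J(w)[A f_S] = A\bigl[(q_s^t - S p_s^t)w \w f_S\bigr]$. The goal is to match this against the defining relation $a_J(w)[A f_S] = -(s \. S)w \w [A f_S]$ of the Gaussian $f_{s \. S}$; since the out-vacuum line is one-dimensional, this would give $A f_S = \lambda_S\, f_{s \. S}$ for a scalar $\lambda_S \ne 0$. I expect the one genuine computation, and \textbf{the main obstacle}, to be verifying the purely algebraic identity $q_s^t - S p_s^t = -(p_s^t - S q_s^t)(s \. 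S)$, i.e.\ $s \. S = (p_s^t - S q_s^t)^{-1}(S p_s^t - q_s^t)$, which must be reconciled with the group action \eqref{eq:group-action-on-T} by invoking the orthogonal relations \eqref{eq:p-q-relations}; the bookkeeping with the lower ($\rO(V)$) signs of Case~F is where care is needed.

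On the other side, $B f_S = c(r_u,r_v)\,c_s\,\vf_s(S)\, f_{s \. S}$ by the very definition \eqref{eq:act-on-gaussians-F}, so both operators send $f_S$ to a multiple of $f_{s \. S}$. To match the two scalars without re-deriving either, I would exploit unitarity: writing $X f_S = \theta_X(S) f_{s \. S}$ for $X \in \set{A,B}$ and polarizing $\braket{X f_S}{X f_T} = \braket{f_S}{f_T}$ gives $\ovl{\theta_X(S)}\,\theta_X(T) = \braket{f_S}{f_T} / \braket{f_{s \. S}}{f_{s \. T}}$, a quantity independent of $X$. Hence $\theta_A(S)/\theta_B(S)$ is a constant of modulus one, and evaluating at $S = 0$ (where $f_0 = \Om$ and $A\,\Om = B\,\Om$) fixes that constant to be $1$. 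Thus $A f_S = B f_S$ on a total set, whence $A = B$ on $\sF_0(V)$, which is the assertion $\mu(r_u)\mu(r_v)\,\Psi = c(r_u,r_v)\,\mu(r_u r_v)\,\Psi$. I would add the remark that, once the intertwining $\mu(s)\psi(w) = \psi(sw)\mu(s)$ is available on all of $\sF(V)$ (Proposition~\ref{pr:entwine-three}), the same conclusion follows more quickly from cyclicity: both $A$ and $B$ conjugate $\psi(w)$ to $\psi(sw)$ and agree on $\Om$, while $\Om$ generates $\sF_0(V)$ under even products of the $\psi$'s; the Gaussian route above is preferable here precisely because it does not presuppose that intertwining.
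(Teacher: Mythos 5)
Your argument is correct, but it takes a genuinely different route from the paper's. The paper proves the identity by direct evaluation on an explicit spanning set of $\sF_0(V)$: it checks both sides on $\braket{v}{u}\,\Om - u \w v$ (observing that this vector is $\braket{v}{u}\,f_{\That_s}$, hence itself a Gaussian multiple), and then on the Gaussians $f_T$ with $Tu = Tv = 0$, for which it computes $\psi(u)\psi(v)\,f_T = \braket{u}{v}\,f_T + u \w v \w f_T$ against $\mu(s)\,f_T = c_s\,f_{T_s + T}$ with $f_{T_s+T} = f_{T_s}\w f_T$; this leans on the special rank-two form \eqref{eq:little-T} of~$T_s$. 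You instead show abstractly that $\psi(u)\psi(v)$ carries every Gaussian $f_S$ into the line $\bC f_{s\.S}$ --- via the Clifford reflection identity, the induced Bogoliubov conjugation, and one-dimensionality of the out-vacuum sector --- and then match the scalar against $c(r_u,r_v)\,\mu(s)\,f_S$ by unitarity plus agreement at $S=0$; that polarization trick is a nice way to avoid recomputing either normalization. The one computation you flag but do not execute, namely $s\.S = (p_s^t - S q_s^t)^{-1}(S p_s^t - q_s^t)$, does hold: cross-multiplying against \eqref{eq:group-action-on-T} reduces it, via $p_s^t p_s + q_s^t q_s = 1$ and $p_s^t q_s = -q_s^t p_s$ (the lower signs in \eqref{eq:p-q-relations}), to $S(p_s^t q_s + q_s^t p_s)S = 0$, so there is no gap there. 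Two details deserve a sentence each in a final write-up: (i) the quotient $\braket{f_S}{f_T}/\braket{f_{s\.S}}{f_{s\.T}}$ at $T=0$ requires $\braket{f_{s\.S}}{f_{T_s}} \neq 0$, which holds for $S$ in a small enough neighbourhood of~$0$ by continuity from $\det^{1/2}(1-T_s^2)>0$ --- harmless, since such Gaussians are still total in $\sF_0(V)$; (ii) one-dimensionality of the solution space of $a_J^\7(Tw)\Phi + a_J(w)\Phi = 0$ for a general $T \in \rSk(V)$ should be justified by writing $T = T_g$ (the big cell is parametrized by all of $\rSk(V)$) and invoking irreducibility. On balance, your proof is more conceptual and would work verbatim with any $g \in \rSO_*(V)$ in place of $r_u r_v$, whereas the paper's is more elementary and exploits the particular structure of a product of two reflections.
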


\begin{proof}
For the proof, we may assume $\braket{u}{v} \neq 0$. Since
$\braket{u}{u} = \braket{v}{v} = 1$, one checks that
$$
\psi(u) \psi(v) \bigl( \braket{v}{u} \Om - u \w v \bigr) = \Om,
$$
and thus, with $s = r_u r_v$, $s^{-1} = r_v r_u$:
$$
\mu(s) \bigl( \braket{v}{u} \Om - u \w v \bigl)
= \braket{v}{u} \mu(s) f_{\That_s}
= \braket{v}{u} c_s^{-1} \Om = \exp(-i\arg\braket{u}{v})\,\Om.
$$
Therefore, $\mu(r_u) \mu(r_v) = c(r_u,r_v) \mu(r_u r_v)$ holds on the
subspace $\bC \Om + \bC(u \w v) \subset \sF_0(V)$. The complementary
subspace is generated by Gaussians $f_T$ where $Tu = Tv = 0$. For such
$f_T$, one finds that
$$
\psi(u) \psi(v) f_T = \braket{u}{v} f_T + u \w v \w f_T,
$$
in view of~\eqref{eq:kill-Gaussian-F} with $e_1 = u$ or~$v$; whereas
$\mu(s) f_T = c_s\,\vf_s(T) f_{s\.T}$ with $\vf_s(T) = 1$, and one 
checks that $s\.T = T_s + T$. 

Then \eqref{eq:little-Gaussian} entails
$$
\mu(s) f_T = \exp(-i\arg\braket{u}{v})\,
\bigl( \braket{u}{v}\,\Om + u \w v \bigr) \w f_T.
$$
This gives the desired result, since each $\Psi \in \sF_0(V)$ is a
linear combination of Gaussians.
\end{proof}

\begin{prop} 
\label{pr:entwine-three}
If $g \in \rSO'_*(V)$ and if $u,v \in V$ are unit vectors, then the
following relation holds for all $\Psi \in \sF_0(V):$
\begin{equation}
\mu(g) \psi(u) \psi(v)\,\Psi = \psi(gu) \psi(gv) \mu(g)\,\Psi.
\label{eq:entwine-three} 
\end{equation}
\end{prop}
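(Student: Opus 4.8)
The plan is to turn the operator identity~\eqref{eq:entwine-three}, which lives entirely on the even subspace $\sF_0(V)$ (each $\psi$ reverses parity and each $\mu(g)$ preserves it, so both composites map $\sF_0(V)$ into $\sF_0(V)$), into a single scalar relation among cocycle values, and then to verify that relation. Writing $s := r_u r_v \in \rSO(V)$ and recalling both $\psi(w) = \mu(r_w)$ from~\eqref{eq:mu-extended} and $r_{gw} = g r_w g^{-1}$ from the discussion after~\eqref{eq:neg-reflection}, so that $r_{gu}r_{gv} = g s g^{-1}$, I would first dispose of the degenerate configurations: if $u$ and $v$ are proportional then $\psi(u)\psi(v)$ is the scalar $d(u,v)$ and~\eqref{eq:entwine-three} is trivial, while the orthogonal case $\braket{u}{v}=0$ is covered by the limiting form of Lemma~\ref{lm:two-reflections}. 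This leaves the generic case, in which $s \in \rSO_*(V)$ and $T_s$ is given by~\eqref{eq:little-T}.

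Next I would apply Lemma~\ref{lm:two-reflections} to each side: on $\sF_0(V)$ it yields $\psi(u)\psi(v) = c(r_u,r_v)\,\mu(s)$ and $\psi(gu)\psi(gv) = c(r_{gu},r_{gv})\,\mu(gsg^{-1})$. Substituting these into~\eqref{eq:entwine-three} and invoking the composition law $\mu(a)\mu(b) = c(a,b)\,\mu(ab)$ of~\eqref{eq:cocycle-F-defn} for $a,b \in \rSO_*(V)$ — applied to the pairs $(g,s)$ and $(gsg^{-1},g)$, with $(gsg^{-1})g = gs$ — collapses both sides to a multiple of $\mu(gs)$. Since $\mu(g)$, $\mu(s)$, $\mu(gs)$ and $\mu(gsg^{-1})$ are genuine unitaries on $\sF_0(V)$ by Lemma~\ref{lm:scalar-factor-F}, and this composition law, holding on the total set of Gaussians $f_S$ with $S$ near~$0$, holds throughout $\sF_0(V)$, the proposition is equivalent to the scalar identity
\[
c(r_u,r_v)\,c(g,s) = c(r_{gu},r_{gv})\,c(gsg^{-1},g).
\]

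The remaining step is to prove this scalar identity, and this is where the genuine content sits: no purely formal juggling of $c$ will do it, because establishing~\eqref{eq:entwine-three} is logically equivalent to extending the single–field intertwining $\mu(g)\psi(w)=\psi(gw)\mu(g)$ from $\sF_0(V)$, where it holds by~\eqref{eq:entwine-two}, to $\sF_1(V)$. I would substitute the explicit reflection phases $c(r_u,r_v)=\exp(i\arg\braket{u}{v})$ and $c(r_{gu},r_{gv})=\exp(i\arg\braket{gu}{gv})$ introduced just before Lemma~\ref{lm:two-reflections}, together with the closed form $c(g,h)=\exp[\,i\Arg\det^{1/2}(1-T_h\That_g)\,]$ from~\eqref{eq:cocycle-F-formula}, and reconcile the resulting phases; here the fact that $T_s$ is supported on the two–dimensional subspace $V' = \bC u + \bC v$ (with $s=1$ on its orthogonal complement) and transforms by $g\.T_s = T_{gs}$, via Lemma~\ref{lm:compositions}, is what makes the computation tractable.

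The main obstacle is precisely this final phase reconciliation, since the reflection phase $\exp(i\arg\braket{u}{v})$ and the determinantal rotation cocycle $c(g,s)$ carry unrelated normalizations and one must show their product is invariant when $u,v,s$ are replaced by $gu,gv,gsg^{-1}$ and $g$ is carried across. The cleanest concrete route, I expect, is to evaluate both sides of~\eqref{eq:entwine-three} directly on the vacuum $\Om$: there $\mu(g)\Om = c_g f_{T_g}$, and by Lemma~\ref{lm:two-reflections} $\psi(u)\psi(v)\Om = c(r_u,r_v)\,c_s\,f_{T_s}$ with $f_{T_s}$ given explicitly by~\eqref{eq:little-Gaussian}; applying $\mu(g)\,f_{T_s} = c_g\,\vf_g(T_s)\,f_{T_{gs}}$ on the left and $\mu(gsg^{-1})\,f_{T_g} = c_{gsg^{-1}}\,\vf_{gsg^{-1}}(T_g)\,f_{T_{gs}}$ on the right reduces the scalar identity to an equality of explicit determinants, the delicate point being that the principal branches of the square roots combine without a spurious sign or factor of~$2\pi$.
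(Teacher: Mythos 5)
Your reduction coincides with the paper's: via $\psi(w)=\mu(r_w)$, Lemma~\ref{lm:two-reflections} and $r_{gu}r_{gv}=g\,r_ur_v\,g^{-1}$, both sides of \eqref{eq:entwine-three} become scalar multiples of the single unitary $\mu(gs)$ (with $s:=r_ur_v$), so the proposition is equivalent to the cocycle identity
\[
c(r_u,r_v)\,c(g,s)=c(r_{gu},r_{gv})\,c(gsg^{-1},g),
\]
and checking the scalars on $\Om$ is then legitimate. Up to that point the proposal is sound and is exactly the route the paper takes.

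The gap is that you stop where the actual work begins: verifying this identity is the entire content of the proposition, and your plan only promises a ``phase reconciliation'' without supplying its mechanism. Two ingredients are missing. First, the branch-of-square-root worry you flag at the end is dissolved, not confronted, in the paper: since $T_s$ has rank two (supported on $\bC u+\bC v$), the Pfaffian expansion \eqref{eq:Gaussian-pairing-F} gives $\det^{1/2}(1-T_s\That_g)=1+\braket{\That_gv}{v'}\braket{v'}{T_sv}$ exactly --- a two-term polynomial, with no branch ambiguity --- which by \eqref{eq:little-T} collapses the left-hand side to $c_g c_{gs}^{-1}\bigl(\braket{u}{v}+\braket{\That_gv}{u}\bigr)$ and, analogously, the right-hand side to $c_g c_{gs}^{-1}\bigl(\braket{gu}{gv}+\braket{gv}{T_g\,gu}\bigr)$. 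Second, and crucially, one must then prove the invariance
\[
\braket{gu}{gv}+\braket{gv}{T_g\,gu}=\braket{u}{v}+\braket{\That_gv}{u},
\]
which the paper extracts by a nontrivial manipulation with $g=p_g+q_g$, the relations \eqref{eq:p-q-relations} and the skewsymmetry of $\That_g$. Nothing in your outline anticipates this identity, and without it the argument does not close. (A minor further point: your dismissal of ``$u$ proportional to $v$'' should be restricted to real proportionality; complex-proportional unit vectors have $|\braket{u}{v}|=1$ and simply fall into the generic case.)
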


\begin{proof}
Since $\psi(u) \psi(v) = \mu(r_u) \mu(r_v) = c(r_u,r_v) \mu(r_u r_v)$
as operators on $\sF_0(V)$, it suffices to show that
$$
c(r_u,r_v) \mu(g) \mu(r_u r_v)
= c(r_{gu},r_{gv}) \mu(r_{gu}r_{gv}) \mu(g)  \word{over}  \sF_0(V).
$$
Now $r_{gu} r_{gv} = g r_u r_v g^{-1}$, or equivalently
$r_{gu} r_{gv} g = g r_u r_v$ in $\rSO(V)$, so it is enough to
establish the following cocycle identity, extending
\eqref{eq:cocycle-relation}:
\begin{equation}
c(r_u, r_v)\, c(g, r_u r_v) = c(r_{gu},r_{gv})\, c(r_{gu}r_{gv}, g),
\label{eq:cocycle-identity} 
\end{equation}
for $g,u,v$ satisfying the stated hypotheses.

Put $s := r_u r_v$; then the left-hand side
of~\eqref{eq:cocycle-identity} equals
\begin{equation}
\exp(i\arg\braket{u}{v}) c_g c_s c_{gs}^{-1}
 \det^{1/2}(1 - T_s \That_g).
\label{eq:two-cocycles} 
\end{equation}
Assume that $\braket{u}{v} \neq 0$ and let $\{v,v'\}$ be an
orthonormal basis for $\bC u + \bC v$, as before. The Pfaffian
expansion \eqref{eq:Gaussian-pairing-F} of the last factor has just
two terms:
$$
\det^{1/2}(1 - T_s \That_g)
= 1 + \braket{\That_g v}{v'}\, \braket{v'}{T_s v} 
$$
and so \eqref{eq:two-cocycles} simplifies to
\begin{align*}
c_g c_{gs}^{-1} \braket{u}{v}
\bigl( 1 + \braket{\That_g v}{v'}\, \braket{v'}{T_s v} \bigr)
&= c_g c_{gs}^{-1} \bigl( \braket{u}{v}
+ \braket{\That_g v}{\bt v'} \bigr)
\\
&= c_g c_{gs}^{-1}\bigl( \braket{u}{v} + \braket{\That_g v}{u} \bigr).
\end{align*}
where the last equality uses \eqref{eq:little-T}, namely
$\braket{v'}{T_s v} = \bt \braket{u}{v}^{-1}$. Thus
$$
c(r_u, r_v)\, c(g, r_u r_v)
= c_g c_{gs}^{-1}\bigl( \braket{u}{v} + \braket{\That_g v}{u} \bigr),
$$
which remains valid when $\braket{u}{v} = 0$, too.

\goodbreak 

In like manner, with $s' := r_{gu}r_{gv}$, the right-hand side 
of~\eqref{eq:cocycle-identity} is
$$
\exp(i\arg\braket{gu}{gv})\, c_g c_{s'} c_{s'g}^{-1}\,
\det^{1/2}(1 - T_g\That_{s'}), 
$$
which simplifies to
$$
c_g c_{gs}^{-1} \braket{gu}{gv}
\bigl( 1 + \braket{\That_{s'}gu}{u'}\, \braket{u'}{T_g gu} \bigr),
$$
where $u'$ is a unit vector orthogonal to~$gu$ in the subspace
$\bC gu + \bC gv$. One checks that in this case
\eqref{eq:little-T} produces
$$
\braket{gu}{gv} 
\bigl( 1 + \braket{\That_{s'}gu}{u'}\, \braket{u'}{T_g gu} \bigr)
= \braket{gu}{gv} + \braket{gv}{T_g gu}.
$$
The result now follows from the next calculation, that uses the 
skewsymmetry of~$\That_g$:
\begin{align*}
\MoveEqLeft{\braket{gu}{gv} + \braket{gv}{T_g gu} 
= \braket{gu}{gv} + \braket{p_gv + q_gv}{q_g(u + p_g^{-1}q_gu)}}
\\
&= \braket{gu}{gv} + \braket{u - \That_gu}{q_g^t(p_gv + q_gv)}
\\
&= \braket{u}{(p_g^t + q_g^t)(p_gv + q_gv)} + \braket{q_gu}{gv}
+ \braket{\That_gu}{p_g^tq_gv - v + p_g^tp_gv}
\\
&= \braket{u}{v} - \braket{\That_gu}{v}
+ \braket{q_gu}{gv} + \braket{p_g\That_gu}{gv}
\\
&= \braket{u}{v} - \braket{\That_gu}{v}
= \braket{u}{v} + \braket{\That_gv}{u}.
\tag*{\qed}
\end{align*}
\hideqed
\end{proof}

Combining \eqref{eq:entwine-two} with \eqref{eq:entwine-three}, it is
now clear that
\begin{equation}
\mu(g)\,[\psi(u)\,\Phi] = \psi(gu)\,[\mu(g)\,\Phi]
\label{eq:easy-entwiner} 
\end{equation}
both when $\Phi = \Psi \in \sF_0(V)$ and when 
$\Phi = \psi(v)\,\Psi \in \sF_1(V)$. The intertwining relation 
\eqref{eq:full-entwine-fer} is thereby established whenever 
$g \in \rSO_*(V)$ and $v \in V$.

Next, replace $\rSO_*(V)$ by group elements of the form $gr_u$, with
$g \in \rSO_*(V)$, $u \in V$. The relations
$$
\psi(u) \psi(v) = 2\,d(u,v) - \psi(v) \psi(u) = \psi(r_uv) \psi(u)
$$
allow us to rewrite \eqref{eq:easy-entwiner} in the form:
$$
[\mu(g) \psi(u)] \psi(v)\,\Phi = \mu(g) \psi(r_uv) \psi(u)\,\Phi
= \psi(gr_u v) [\mu(g) \psi(u)]\,\Phi,
$$
which entails $\mu(gr_u) \psi(v) = \psi(gr_uv) \mu(gr_u)$ as 
operators on~$\sF(V)$. Therefore, \eqref{eq:full-entwine-fer} also 
when $g$ is replaced by~$gr_u$. Since $g r_u g^{-1} = r_{gu}$, it 
also holds for group elements of the form $r_v g$, always with
$g \in \rSO'_*(V)$.

\marker
We may at last consider the general case with $g \in \rO(V)$.

Assume that $\dim \ker p_g^t = \dim \ker p_g = k > 0$. Then we can 
factorize $g = rh$ as in subsection~\ref{ssc:spin-repn}, with 
$h \in \rSO_*(V)$ and $r := r_1 \cdots r_k$ a product of $k$
negative-reflections determined by an \textit{oriented}
orthonormal basis $\{e_1,\dots,e_k\}$ for $\ker p_g^t$. Now define
$$
\mu(g) := \psi(e_1) \cdots \psi(e_n)\,\mu(h).
$$
Then $\mu(g)\,\Om = e_1 \wyw e_n \w f_{T_h}$, as expected. It remains 
only to check that the cocycle $c$ may be extended compatibly to the 
general case. For instance, since $c(r_v,h) = 1$, one must take
$c(r_u, r_vh) := c(r_u,r_v)\,c(r_ur_v, h)$; and so on.

This ``pin representation'' of $\rO(V)$ restricts to the subgroup
$\rSO(V)$ as a direct sum of two irreducible subrepresentations on
$\sF_0(V)$ and on $\sF_1(V)$, respectively. (They are inequivalent:
they extend to the two inequivalent unirreps of $\rSpinc(V)$, see for 
instance \cite[Lemma~5.11]{Polaris}.)

\subsection*{Acknowledgments}
We thank José M. Gracia-Bondía for much helpful advice on these and
related matters. We acknowledge support from the Vicerrectoría de
Investigación of the Universidad de Costa~Rica.



\bigskip   


\begin{thebibliography}{33}

\footnotesize 

\bibitem{Shale62}
D. Shale, 
``Linear symmetries of free Boson fields'',
Trans. Amer. Math. Soc. \textbf{103} (1962), 149--167.

\bibitem{ShaleS65}
D. Shale and W. F. Stinespring,
``Spinor representations of infinite orthogonal groups'',
J. Math. Mech. \textbf{14} (1965), 315--322.

\bibitem{PressleyS86}
A. Pressley and G. B. Segal, 
\textit{Loop Groups}, 
Clarendon Press, Oxford, 1986.

\bibitem{RobinsonR89}
P. L. Robinson and J. H. Rawnsley,
\textit{The Metaplectic Representation, $Mp^c$ Structures and 
Geometric Quantization},
Memoirs of the AMS \textbf{410},
Amer. Math. Soc., Providence, RI, 1989.

\bibitem{Segal60}
I. E. Segal,
``Mathematical problems of relativistic physics'',
lectures at the Summer Seminar, Boulder, CO, 1960.

\bibitem{Segal78}
I. E. Segal,
``The complex-wave representation of the free boson field'',
Adv. Math. Suppl. Studies \textbf{3} (1978), 321--343.

\bibitem{Bargmann61}
V. Bargmann,
``On a Hilbert space of analytic functions and an associated
integral transform. I'',
Commun. Pure Appl. Math. \textbf{14} (1961), 187--214.

\bibitem{Bargmann67}
V. Bargmann,
``On a Hilbert space of analytic functions and an associated
integral transform. II'',
Commun. Pure Appl. Math. \textbf{20} (1967), 1--101.

\bibitem{NeumaierGF22}
A. Neumaier and A. Ghanni Farashahi,
``Introduction to coherent quantization'',
Anal. Math. Phys. \textbf{12} (2022), 91.

\bibitem{Vergne77}
M. Vergne,
``Groupe symplectique et seconde quantification'',
C. R. Acad. Sci. Paris \textbf{285A} (1977), 191--194.

\bibitem{Folland89}
G. B. Folland,
\textit{Harmonic Analysis in Phase Space},
Annals of Mathematics Studies \textbf{122},
Princeton University Press, Princeton, 1989.

\bibitem{Wallach18}
N. R. Wallach,
\textit{Symplectic Geometry and Fourier Analysis}, second edition,
Dover, Mineola, NY, 2018.

\bibitem{Rhea}
J. M. Gracia-Bondía and J. C. Várilly,
``QED in external fields from the spin representation'',
J. Math. Phys. \textbf{35} (1994), 3340--3367.

\bibitem{Polaris}
J. M. Gracia-Bondía, J. C. Várilly and H. Figueroa,
\textit{Elements of Noncommutative Geometry},
Birkhäuser, Boston, 2001.

\bibitem{Kamuk}
A. J. Naranjo-Alvarado,
``Las representaciones metapléctica y de espín de ciertos grupos de
simetría: un estudio comparativo'',
M.\,Sc.~thesis, Universidad de Costa Rica, 2021.

\bibitem{Araki87}
H. Araki,
``Bogoliubov automorphisms and Fock representations of canonical
anticommutation relations'',
in \textit{Operator Algebras and Mathematical Physics}, 
Palle E. T. Jørgensen and Paul S. Muhly, eds., 
Contemp. Math. \textbf{62} (1987), 23--141.

\bibitem{Hyperion}
J. M. Gracia-Bondía and J. C. Várilly,
``The metaplectic representation and boson fields'',
preprint CPP/91/21, Austin, TX, 1991.

\bibitem{BaezSZ92}
J. C. Baez, I. E. Segal and Z. Zhou,
\textit{Introduction to Algebraic and Constructive Quantum Field
Theory},
Princeton University Press, Princeton, NJ, 1992.

\bibitem{Itzykson67}
C. Itzykson,
``Remarks on boson commutation rules'',
Commun. Math. Phys. \textbf{4} (1967), 92--122.

\bibitem{Emch72}
G. G. Emch,
\textit{Algebraic Methods in Statistical Mechanics and Quantum
Field Theory},
Wiley, New York, 1972.

\bibitem{Taylor86}
M. E. Taylor,
\textit{Noncommutative Harmonic Analysis},
Amer. Math. Soc., Providence, RI, 1986.

\bibitem{CalderonGRL18}
J. S. Calderón-García and A. F. Reyes-Lega,
``Majorana fermions and orthogonal complex structures'',
Mod. Phys. Lett. A \textbf{33} (2018), 1840001.

\bibitem{GoodmanW09}
R. Goodman and N. R. Wallach,
\textit{Symmetry, Representations, and Invariants},
Graduate Texts in Mathematics \textbf{255},
Springer, New York, 2009.

\bibitem{Ruijsenaars78}
S. N. M. Ruijsenaars,
``On Bogoliubov transformations. II. The general case'',
Ann. Phys. \textbf{116} (1978), 105--134.


\end{thebibliography}
\end{document}